\keywords{Gauss congruence, q-analogue, cyclic sieving, necklaces, tubings}
\title{Polynomial and combinatorial analogues\\of Gauss congruence}
\author[1]{Fern Gossow}
\affil[1]{%
Department of Mathematics and Statistics, University of Sydney, NSW, Australia

\email{F.Gossow@maths.usyd.edu.au}%
}
\definecolor{blu}{HTML}{5BCEFA}
\definecolor{pnk}{HTML}{F5A9B8}
\definecolor{ppl}{HTML}{9C59D1}
\definecolor{ylw}{HTML}{FCF434}
\newcommand\Z{\mathbb Z}
\newcommand\maj{\mathrm{maj}}
\newcommand\rk{\mathsf{rk}}
\newcommand\eps{\varepsilon}
\newcommand\Cyc{\mathsf{C}}
\newcommand\tr{\mathrm{tr}}
\newcommand\x{{\mathbf x}}
\newcommand{\qbinom}[2]{\genfrac{[}{]}{0pt}{}{#1}{#2}_q}
\newcommand{\tube}[6]{($(#5:#3-#4)+(#1,#2)$) arc (#5:#6:#3-#4) -- 
($(#6:#3-#4)+(#1,#2)$) arc (#6+180:#6:#4) -- 
($(#6:#3+#4)+(#1,#2)$) arc (#6:#5:#3+#4) -- 
($(#5:#3+#4)+(#1,#2)$) arc (#5:#5-180:#4) --
cycle}
\newcommand{\ortharc}[5]{($(#1,#2)+(#4:#3)$) arc (#4-90:#5-270:{tan((#5-#4)/2)})}
\begin{document}

\maketitle


\begin{abstract}
The cyclic sieving phenomenon provides a link between a polynomial analogue of Gauss congruence known as $q$-Gauss congruence, and a combinatorial analogue of Gauss congruence based on sequences of cyclic group actions. We strengthen this link in two major ways: by characterising $q$-Gauss congruence via explicit formulae, and by developing a universal model for the combinatorics based on necklaces which allow beads to vary in both colour and length. This gives many novel examples of cyclic sieving involving necklaces, path walks, tubings and more. We extend the definition of Gauss congruence to sequences indexed by an arbitrary ranked semigroup, and synthesise known results into this theory.
\end{abstract}



\section{Introduction}

An integer sequence $(a_n)_{n\geq 1}$ satisfies \emph{Gauss congruence} if
\[\sum_{d\mid n}\mu(n/d)a_{d}\equiv 0\pmod{n}\]
for every $n\geq 1$, where $\mu$ is the number-theoretic M\"obius function
\[\mu(n)=\begin{cases}
(-1)^r&\text{if $n$ is a product of $r$ distinct primes,}\\
0 & \text{otherwise}.
\end{cases}\]
These sequences are called Dold sequences, Fermat sequences, Newton sequences and other names. They appear in the study of topological dynamics and algebraic number theory (see \cite{dold-survey} and \cite{zarelua} for relevant surveys). They include $(\lambda^n)$ for $\lambda\in\Z$, the divisor-sum function $(\sigma(n))$ and the Lucas numbers $(L_n)$.

We obtain by M\"obius inversion and \cite[Theorems 5,6]{gauss-newton2} two useful parametrisations of Gauss congruences by arbitrary integer sequences $(b_n)$ and $(c_n)$. This association is used implicitly throughout this paper.
\begin{theorem}\label{thm:abc-intro}Let $(a_n)$ be a sequence of integers. The following are equivalent:
\begin{itemize}
\item The sequence $(a_n)$ satisfies Gauss congruence.
\item There exists a (unique) integer sequence $(b_n)$ such that for every $n\geq 1$:
\[a_n=\sum_{d\mid n}b_dd\]
\item There exists a (unique) integer sequence $(c_n)$ such that for every $n\geq 1$:
\[a_n=\sum_{n_1+n_2+\cdots=n}n_1c_{n_1}c_{n_2}\cdots.\]
\end{itemize}
\end{theorem}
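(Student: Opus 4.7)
The plan is to prove $(1)\Leftrightarrow(2)$ by M\"obius inversion and $(2)\Leftrightarrow(3)$ by reducing both parametrisations to a single generating-function identity, namely $1-C(x)=\prod_{d\geq 1}(1-x^d)^{b_d}$.

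For $(1)\Leftrightarrow(2)$, the relation $a_n=\sum_{d\mid n} d\,b_d$ is inverted by the standard M\"obius formula to $n\,b_n=\sum_{d\mid n}\mu(n/d)a_d$. A (necessarily unique) sequence of integers $(b_n)$ satisfying this for every $n$ therefore exists iff $n$ divides $\sum_{d\mid n}\mu(n/d)a_d$ for every $n$, which is exactly Gauss congruence.

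For $(2)\Leftrightarrow(3)$, I would pass to the generating series $A(x)=\sum_{n\geq 1}a_nx^n$ and $C(x)=\sum_{n\geq 1}c_nx^n$ and show that both parametrisations are equivalent to the same power-series identity. From (2), summing geometric series gives
\[A(x)=\sum_{d\geq 1} d\,b_d\,\frac{x^d}{1-x^d}=-x\frac{d}{dx}\log\prod_{d\geq 1}(1-x^d)^{b_d}.\]
From (3), a cyclic-rotation bijection on compositions of $n$ into $k$ parts yields the averaging identity
\[\sum_{n_1+\cdots+n_k=n} n_1\,c_{n_1}\cdots c_{n_k}=\frac{n}{k}\sum_{n_1+\cdots+n_k=n}c_{n_1}\cdots c_{n_k},\]
because $c_{n_1}\cdots c_{n_k}$ is invariant under rotation while the positional weight averages to $n/k$. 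Summing over $k$ gives $a_n/n=[x^n]\sum_{k\geq 1}C(x)^k/k=-[x^n]\log(1-C(x))$, hence $A(x)=-x\tfrac{d}{dx}\log(1-C(x))$. Comparing the two expressions for $A(x)$ and using $C(0)=0$ yields the displayed identity.

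What remains is the integrality bijection between $(b_n)$ and $(c_n)$ under this identity. The direction $(2)\Rightarrow(3)$ is immediate because each factor $(1-x^d)^{b_d}$ lies in $\Z[[x]]$ whenever $b_d\in\Z$, so $c_n\in\Z$ as well. The converse $(3)\Rightarrow(2)$, namely that every $F(x)\in 1+x\Z[[x]]$ admits integer exponents $b_d$ in its product expansion, is the necklace (or Witt) factorisation of formal power series cited from \cite[Theorems 5,6]{gauss-newton2}; I expect this to be the main obstacle. It can be established either by interpreting $b_d$ combinatorially as a signed count of aperiodic necklaces, or algebraically via Dwork-type $p$-adic congruences on $F(x)$.
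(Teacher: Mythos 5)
Your proof is correct, but it routes the third equivalence differently from the paper. For $(1)\Leftrightarrow(2)$ you and the paper agree (the paper proves the general semigroup version via the M\"obius function of the divisibility poset, which specialises to classical M\"obius inversion on $\Z_{\geq 1}$). For the $(c_n)$ parametrisation, however, the paper never touches generating functions: it proves $(3)\Rightarrow(1)$ by exhibiting an explicit $q$-Gauss congruence (the third formula of Theorem \ref{thm:abc-q}, built from $q$-multinomial coefficients) whose evaluation at $q=1$ is $\sum_{n_1+n_2+\cdots=n}n_1c_{n_1}c_{n_2}\cdots$, and proves $(1)\Rightarrow(3)$ by a minimal-counterexample argument: take the unique rational $(c_s)$, round it to $(c_s')=\lfloor c_s\rfloor$, and compare the two resulting Gauss congruences at the first index where they differ. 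You instead prove $(2)\Leftrightarrow(3)$ directly through the identity $1-C(x)=\prod_{d\geq 1}(1-x^d)^{b_d}$ — which is exactly the paper's unproved equation (\ref{eq:bc}) — using the cyclic averaging trick to compute $A(x)=-x\frac{d}{dx}\log(1-C(x))$. Your route is more elementary and self-contained for $S=\Z_{\geq 1}$, and as a bonus supplies a proof of (\ref{eq:bc}); the paper's route is chosen because it survives the generalisation to arbitrary ranked semigroups with finite decomposition, where the logarithm/product manipulation is less immediate. One remark: the step you flag as the main obstacle, that every $F\in 1+x\Z[[x]]$ has integer exponents in $\prod_d(1-x^d)^{b_d}$, needs no necklace combinatorics or $p$-adic input — if $b_1,\dots,b_{d-1}\in\Z$ are chosen so that $F(x)\prod_{e<d}(1-x^e)^{-b_e}=1+gx^d+O(x^{d+1})$ with $g\in\Z$, then $b_d:=g$ clears the coefficient of $x^d$, and induction finishes; this is essentially the same ``first index where integrality could fail'' argument the paper uses for $(c_s)$.
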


Gorodetsky in \cite{q-congruences} defines a polynomial analogue for Gauss congruence. A sequence of integer polynomials $(f_n(q))_{n\geq 1}$ satisfies $q$-\emph{Gauss congruence} if
\[\sum_{d\mid n}\mu(n/d)f_d(q^{n/d})\equiv 0\pmod{[n]_q}\]
for every $n\geq 1$, where $[n]_q:=1+q+\cdots+q^{n-1}$. If $(f_n(q))$ satisfies $q$-Gauss congruence, then $(f_n(1))$ satisfies Gauss congruence. Conversely, an integer polynomial sequence $(g_n(q))$ with $f_n(1)=g_n(1)$ satisfies $q$-Gauss congruence if and only if $f_n(q)\equiv g_n(q)\pmod{q^n-1}$ for every $n$. Hence, to determine all $q$-Gauss congruences it is enough to find an appropriate $q$-analogue for every Gauss congruence.

We achieve this in three ways, mirroring Theorem \ref{thm:abc-intro}. Let $\mu_j$ denote the $j^\text{th}$ Ramanujan sum (see Definition \ref{def:ramanujan}) and let $[\exp_{\lambda}(n)]_q$ be an appropriate $q$-analogue of the Gauss congurence $(\lambda^n)_{n\geq 1}$ for $\lambda\in\Z$ (see Definition \ref{def:exp}).
\begin{theorem}\label{thm:abc-q-intro}
Fix a sequence $(a_n)$ satisfying Gauss congruence. The following sequences $(f_n(q))$ satisfy $q$-Gauss congruence and $f_n(1)=a_n$ for every $n$:
\begin{align*}f_n(q)&=\frac{1}{n}\sum_{j=0}^{n-1}\left(\sum_{d\mid n}\mu_j(n/d)a_d\right)q^j,\\
f_n(q)&=\sum_{d\mid n}[d]_{q^{n/d}}b_d,\text{ and}\\
f_n(q)&=\sum\frac{[n]_q}{[n_1+n_2+\cdots]_q}\qbinom{n_1+n_2+\cdots}{n_1,n_2,\dots}\prod_{m}[\exp_{c_m}(n_m)]_q\end{align*}
where the final sum is over all $n_1,n_2,\dots\geq 0$ with $n_1+2n_2+3n_3+\cdots=n$.
\end{theorem}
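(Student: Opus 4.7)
The plan is to prove Part 2 first by a direct Möbius computation, derive Part 1 from Part 2 by matching coefficients (which yields integrality for free), and finally handle Part 3, which is the main obstacle.

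For Part 2, I observe that $f_n(q)=\sum_{d\mid n}[d]_{q^{n/d}}b_d$ has degree $\le n-1$, since each $[d]_{q^{n/d}}$ has degree $n-n/d$, and $f_n(1)=\sum_{d\mid n}db_d=a_n$ by Theorem~\ref{thm:abc-intro}. For the $q$-Gauss congruence I interchange the orders of summation:
\[\sum_{d\mid n}\mu(n/d)f_d(q^{n/d})=\sum_{e\mid n}[e]_{q^{n/e}}b_e\sum_{\substack{d\mid n\\ e\mid d}}\mu(n/d).\]
The inner sum equals $[e=n]$ by the standard M\"obius identity (reindexing $d=ef$ with $f\mid n/e$ gives $\sum_{f\mid n/e}\mu((n/e)/f)=[n/e=1]$), so the whole expression collapses to $[n]_qb_n$, trivially divisible by $[n]_q$.

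For Part 1, since the Part 2 polynomial already has degree $<n$, it suffices to match coefficients. The coefficient of $q^k$ in Part 2 is $\sum_{d\mid n,\,(n/d)\mid k}b_d$, with the convention that $(n/d)\mid 0$ always. Substituting $a_d=\sum_{e\mid d}eb_e$ into Part 1 and invoking the orthogonality $\sum_{g\mid m}\mu_k(g)=m\cdot[m\mid k]$ of Ramanujan sums, a short bookkeeping argument simplifies $\frac{1}{n}\sum_{d\mid n}\mu_k(n/d)a_d$ to the same $\sum_{e\mid n,\,(n/e)\mid k}b_e$. The two polynomials are therefore literally equal, and integrality of the Part 1 coefficients follows from integrality of $(b_d)$.

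Part 3 is the main obstacle. My plan is to interpret each summand combinatorially: the prefactor $\frac{[n]_q}{[n_1+n_2+\cdots]_q}\qbinom{n_1+n_2+\cdots}{n_1,n_2,\dots}$ should be the fake-degree polynomial of the cyclic rotation action on necklaces of total length $n$ with $n_k$ beads of size $k$ for each $k$, and $[\exp_{c_m}(n_m)]_q$ the fake-degree polynomial of $c_m$-colourings on the $n_m$ beads of size $m$. The steps are: (i) verify $f_n(1)=a_n$ by reducing the $q$-multinomial and the $q$-exponential at $q=1$ and matching the resulting necklace count with the third parametrisation of Theorem~\ref{thm:abc-intro}; (ii) show that $f_n(q)$ agrees modulo $q^n-1$ with the Part 2 polynomial built from the $(b_d)$ associated with $(a_n)$, thereby reducing the $q$-Gauss congruence to Part 2. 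The technical hurdle in (ii) is a $q$-analogue of the classical identity writing $db_d$ as a count of primitive coloured necklaces of length $d$; at $q=1$ this is Burnside's lemma, but at general $q$ I expect to invoke the cyclic sieving phenomenon emphasised in the abstract together with the rotation-compatibility of $[\exp_\lambda(n)]_q$ to be established from its explicit definition.
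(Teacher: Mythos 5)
Your treatment of the second and third displayed formulas (your Parts 2 and 1) is correct and complete, and it takes a genuinely different route from the paper. The paper proves both via the root-of-unity characterisation of $q$-Gauss congruence (Proposition \ref{prop:root-sub}): it checks $g_n(\omega_d)=a_{n/d}$ directly, using the orthogonality $\sum_{j}\omega_d^j\mu_j(n/t)=n\,[d=n/t]$ for the Ramanujan-sum formula. You instead verify the defining congruence for $\sum_{d\mid n}[d]_{q^{n/d}}b_d$ by a direct M\"obius interchange (collapsing to $[n]_qb_n$), and then observe that the Ramanujan-sum polynomial is \emph{literally equal} to it coefficient-by-coefficient via $\sum_{g\mid m}\mu_k(g)=m\,[m\mid k]$. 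Both computations check out, and your identification of the first two formulas as the same polynomial of degree $<n$ is a clean way to get integrality of the first formula for free; the paper gets integrality separately from Proposition \ref{prop:gauss-other}.

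Part 3, however, is a genuine gap: you have correctly identified it as the main obstacle and then not closed it. Step (i) ($f_n(1)=a_n$) is routine, but step (ii) --- that the third formula is a $q$-Gauss congruence, equivalently that it agrees with $\sum_{d\mid n}[d]_{q^{n/d}}b_d$ modulo $q^n-1$ --- is exactly the content of the theorem, and ``I expect to invoke the cyclic sieving phenomenon together with the rotation-compatibility of $[\exp_\lambda(n)]_q$'' is a statement of intent, not an argument. Two concrete issues. First, the necklace/CSP interpretation you propose only makes sense when every $c_m\geq 0$; for negative $c_m$ the factor $[\exp_{c_m}(n_m)]_q=\eps_{n_m}(q)[\exp_{|c_m|}(n_m)]_q$ has no direct fixed-point count, and the paper must handle this separately (multiplying by $\prod_{c_s<0}\eps_{\alpha(s)}(q)$ and using that multiplication of $q$-Gauss congruences is permitted on torsion-free semigroups, Lemma \ref{lem:multiply}). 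Second, even in the nonnegative case, the identity you would need is not ``$db_d$ counts primitive necklaces'' but rather the root-of-unity evaluation of the whole summand $\frac{[n]_q}{[|\alpha|]_q}\qbinom{|\alpha|}{\alpha}\prod_m[\exp_{c_m}(n_m)]_q$; the paper obtains this by establishing Theorem \ref{thm:fund} (a limit computation with $\frac{[\gcd(\alpha)]_q}{[|\alpha|]_q}\qbinom{|\alpha|}{\alpha}$ and a known divisibility result for $q$-multinomials) and then pushing forward along two semigroup morphisms, with a $q$-Vandermonde summation producing the $[\exp_{c_m}]$ factors. Some such computation, or an appeal to the Reiner--Stanton--White CSP for words under rotation made precise, is required; as written your proposal does not contain it.
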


We define a combinatorial analogue of Gauss congruence in Section \ref{sect:lyndon} by combining the work of \cite{lyndon} and \cite{q-congruences}. Suppose $(X_n)_{n\geq 1}$ is a sequence of finite sets, where each $X_n$ is equipped with an action of the cyclic group $\Cyc_n$. We call this sequence a \emph{Lyndon structure} if
\[\#X_n^{\Cyc_d}=\#X_{n/d}\]
for every $n\geq 1$ and $d\mid n$, where $X_n^{\Cyc_d}$ denotes the subset of $X_n$ fixed by every element of the subgroup $\Cyc_d\leq\Cyc_n$. 

Recall from \cite{csp} the notion of a cyclic sieving triple: given a finite set $X$ with an action of the cyclic group $\Cyc_n$ and an integer polynomial $f(q)$, the triple $(X,\Cyc_n,f(q))$ satisfies the cyclic sieving phenomenon (CSP) if
\[f(\omega_d)=\#X^{\Cyc_d}\]
for every $d\mid n$, where $\omega_d$ is a primitive $d^\text{th}$ root of unity.

\begin{theorem}
Fix a Lyndon structure $(X_n)$, and set $a_n:=\#X_n$. The sequence $(a_n)$ satisfies Gauss congruence, and $(b_n)$ counts orbits of maximal order in $X_n$. If $(f_n(q))$ is a sequence of integer polynomials with $f_n(1)=a_n$, the triples $(X_n,\Cyc_n,f_n(q))$ satisfy the cyclic sieving phenomenon for every $n\geq 1$ if and only if $(f_n(q))$ satisfies $q$-Gauss congruence.
\end{theorem}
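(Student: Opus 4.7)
My plan treats the $\Cyc_n$-orbit structure of $X_n$ as the unifying object and proceeds in three stages.

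First I would decompose $\#X_n^{\Cyc_d}$ by orbit size. Writing $N_e(n)$ for the number of $\Cyc_n$-orbits of size $e$ in $X_n$, a point lies in $X_n^{\Cyc_d}$ exactly when its stabiliser contains $\Cyc_d$, so $\#X_n^{\Cyc_d}=\sum_{e\mid n/d}eN_e(n)$. Combining this with the Lyndon hypothesis $\#X_n^{\Cyc_d}=\#X_{n/d}=\sum_{e\mid n/d}eN_e(n/d)$ and applying Möbius inversion on the divisor poset of $n$, I obtain $mN_m(n)=\sum_{d\mid m}\mu(m/d)a_d$ for every $m\mid n$, and in particular $mN_m(n)=mN_m(m)$ by running the same identity with $m$ in place of $n$. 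Setting $m=n$ shows $n$ divides $\sum_{d\mid n}\mu(n/d)a_d$, which is Gauss congruence for $(a_n)$; defining $b_n:=N_n(n)$ as the count of orbits of maximal order recovers the inversion formula $a_n=\sum_{e\mid n}eb_e$ of Theorem \ref{thm:abc-intro}.

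Next I would translate the cyclic sieving condition into polynomial data. The CSP hypothesis $f_n(\omega_d)=\#X_n^{\Cyc_d}=a_{n/d}$ for every $d\mid n$ completely determines $f_n(q)$ modulo $q^n-1$, since $q^n-1=\prod_{d\mid n}\Phi_d(q)$ splits into pairwise coprime cyclotomic factors and $f_n(q)\in\Z[q]$. A discrete Fourier inversion, grouping $k\in\{0,\dots,n-1\}$ by $\gcd(n,k)$ and invoking the defining property of the Ramanujan sum, yields this residue explicitly and matches it to the first formula in Theorem \ref{thm:abc-q-intro}. Call that explicit polynomial $g_n(q)$: by Theorem \ref{thm:abc-q-intro} it satisfies $q$-Gauss congruence, and by construction $(X_n,\Cyc_n,g_n(q))$ satisfies CSP for every $n$.

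To finish, I would invoke the characterisation recalled in the introduction: for any $(f_n(q))$ with $f_n(1)=a_n$, the sequence satisfies $q$-Gauss congruence if and only if $f_n(q)\equiv g_n(q)\pmod{q^n-1}$ for every $n$. The CSP condition for $(X_n,\Cyc_n,f_n(q))$ reduces by CRT to exactly the same system of congruences, so the two notions coincide. The main obstacle is the bookkeeping in the first stage, where two separate Möbius inversions over the divisor lattices of $n$ and of each $m\mid n$ must be kept straight to justify the clean identity $N_e(n)=N_e(e)$. Once that is in place, the cyclic sieving equivalence is a dictionary translation via Fourier inversion and the explicit $q$-analogue already provided by Theorem \ref{thm:abc-q-intro}.
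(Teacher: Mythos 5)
Your proposal is correct and follows essentially the same route as the paper: stage one is the paper's orbit-count identity $a_n=\sum_{e\mid n}eb_e$, with the double M\"obius inversion making explicit the fact (left implicit in the paper) that the number of orbits of a given order is consistent across the sequence, and stages two and three amount to the observation that both CSP and $q$-Gauss congruence are characterised by the same root-of-unity evaluations $f_n(\omega_d)=a_{n/d}$. The only difference is that the paper invokes Proposition \ref{prop:root-sub} directly for this equivalence, whereas you reach it by interpolating through the explicit Ramanujan-sum polynomial of Theorem \ref{thm:abc-q} and reducing modulo $q^n-1$; both are valid and rest on the same cyclotomic factorisation.
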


Hence, for every Lyndon structure $(X_n)$ we can recover a family of cyclic sieving triples by determining a $q$-Gauss congruence which enumerates $(X_n)$ at $q=1$. For instance, in Section \ref{sect:tubings} we consider tubings on the cycle graph $\Gamma_n$, which give a combinatorial analogue for the central Delannoy numbers.
\[\begin{tikzpicture}[scale = 0.8]
\draw \tube{0}{0}{1}{0.3}{1*45}{5*45};
\draw \tube{0}{0}{1}{0.25}{1*45}{4*45};
\draw \tube{0}{0}{1}{0.2}{1*45}{1*45};
\draw \tube{0}{0}{1}{0.2}{3*45}{4*45};
\draw \tube{0}{0}{1}{0.3}{7*45}{7*45};

\draw \tube{3}{0}{1}{0.3}{3*45}{6*45};
\draw \tube{3}{0}{1}{0.25}{5*45}{6*45};
\draw \tube{3}{0}{1}{0.25}{3*45}{3*45};
\draw \tube{3}{0}{1}{0.3}{0*45}{1*45};
\draw \tube{3}{0}{1}{0.25}{0*45}{0*45};

\draw \tube{6}{0}{1}{0.3}{1*45}{7*45};
\draw \tube{6}{0}{1}{0.25}{2*45}{7*45};
\draw \tube{6}{0}{1}{0.2}{4*45}{7*45};
\draw \tube{6}{0}{1}{0.15}{6*45}{7*45};

\foreach \x in {0,1,2} {
	\draw (3*\x,0) circle (1);
}
\foreach \x in {0,1,2} {
	\foreach \ang in {0,1,2,3,4,5,6,7} {
		\draw [fill = black] ($(3*\x,0)+(45*\ang:1)$) circle (0.1);
	}
}
\end{tikzpicture}\]
\begin{theorem}\label{thm:tubings-intro}
Let $X_n$ be the set of (improper) tubings on $\Gamma_n$. Define the polynomials
\[f_n(q)=\sum_{k=0}^{n-1}\qbinom{n+k-1}{k}\qbinom{n-1}{k}.\]
Then $(X_n)$ is a Lyndon structure and $(f_n(q))$ satisfies $q$-Gauss congruence and $f_n(1)=\#X_n$. Hence, $(X_n,\Cyc_n,f_n(q))$ satisfies the cyclic sieving phenomenon for every $n\geq 1$.
\end{theorem}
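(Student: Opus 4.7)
The plan is to verify the three ingredients of the conclusion---(i) $(X_n)$ is a Lyndon structure, (ii) $f_n(1)=\#X_n$, and (iii) $(f_n(q))$ satisfies $q$-Gauss congruence---after which the CSP follows from the preceding theorem. The most efficient ordering is to establish (i) and (ii) directly, then deduce the CSP by a root-of-unity computation, and read off (iii) from the biconditional in that theorem.

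For (i), I would make the $\Cyc_n$-action explicit as rotation of the cycle $\Gamma_n$, which permutes the tubes of any tubing. For $d\mid n$, I would then exhibit a bijection $X_n^{\Cyc_d}\to X_{n/d}$ by collapsing each $\Cyc_d$-orbit of vertices of $\Gamma_n$ to a single vertex of $\Gamma_{n/d}$ and transporting the tube structure; the inverse unrolls a tubing on $\Gamma_{n/d}$ to its unique $\Cyc_d$-periodic lift. For (ii), I would recognise $f_n(1)=\sum_{k=0}^{n-1}\binom{n+k-1}{k}\binom{n-1}{k}$ as the central Delannoy number $D_{n-1}$ and establish $\#X_n=D_{n-1}$ via a direct bijection between improper tubings on $\Gamma_n$ and central Delannoy paths, for instance through an outermost-tube decomposition relative to a chosen base vertex.

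For (iii), the cleanest route is to verify the CSP directly and invoke the biconditional in the preceding theorem. Let $\omega$ be a primitive $d$-th root of unity with $d\mid n$. The $q$-Lucas theorem implies that the evaluation of $\qbinom{n-1}{k}$ at $q=\omega$ vanishes unless $d\mid k$; restricting the defining sum to $k=vd$ and reapplying $q$-Lucas to $\qbinom{n+k-1}{k}$ collapses $f_n(\omega)$ to the central Delannoy number $D_{n/d-1}$, which by (i) and (ii) equals $\#X_n^{\Cyc_d}$. Hence $(X_n,\Cyc_n,f_n(q))$ satisfies the cyclic sieving phenomenon for every $n$, and the biconditional then delivers $q$-Gauss congruence for free.

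The principal obstacle is step (i): verifying the orbit-collapse bijection requires careful bookkeeping, particularly for $\Cyc_d$-orbits of tubes whose individual stabilisers are proper subgroups of $\Cyc_d$, and for ensuring that both the tube-compatibility conditions and the improper top tube descend correctly to the quotient cycle. The $q$-Lucas computation in step (iii) and the Delannoy-path bijection in step (ii) are comparatively routine once this framework is in place.
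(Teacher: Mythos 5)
Your strategy is sound in outline but takes a genuinely different route from the paper. The paper refines by the number of tubes: it shows that $(X_{n,k})$ (tubings with $k$ tubes) is a Lyndon structure on $\Z_{\geq 1}[\Z_{\geq 0}]$ via the festoon correspondence (start a new bead after each free vertex), proves $\#X_{n,k}=\binom{n+k-1}{k}\binom{n-1}{k}$ via the bijection of Lemma \ref{lem:cycle-tubings}, establishes $q$-Gauss congruence of each $\qbinom{n+k-1}{k}\qbinom{n-1}{k}$ purely formally by chaining two $q$-binomial congruences and pulling back along $(n,k)\mapsto(n,k,n-k)$, and finally sums over $k$ by pushforward. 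You instead work unrefined on $\Gamma_n$, get the Lyndon property from a quotient bijection $X_n^{\Cyc_d}\to X_{n/d}$, and replace the morphism calculus with a direct root-of-unity evaluation via $q$-Lucas. This is more elementary and self-contained, but it forfeits the refined cyclic sieving statement for each fixed number of tubes, which the paper gets for free.

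Two concrete problems. First, your $q$-Lucas step blames the wrong factor: since $d\mid n$ we have $n-1\equiv d-1\pmod d$, so the evaluation of $\qbinom{n-1}{k}$ at $\omega_d$ equals $\binom{n/d-1}{\lfloor k/d\rfloor}$ times the evaluation of $\qbinom{d-1}{k\bmod d}$ at $\omega_d$, and the latter is \emph{never} zero (its bottom index never exceeds its top index; e.g.\ $\qbinom{1}{1}$ at $q=-1$ is $1$). The vanishing for $d\nmid k$ comes from the other factor: $n+k-1\equiv (k\bmod d)-1\pmod d$, so $q$-Lucas reduces $\qbinom{n+k-1}{k}$ to a factor $\qbinom{(k\bmod d)-1}{k\bmod d}=0$ when $k\bmod d\geq 1$. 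With that correction the computation does collapse to $\sum_v\binom{n/d+v-1}{v}\binom{n/d-1}{v}=f_{n/d}(1)$ as you intend. Second, and more seriously, the enumeration $\#X_n=\sum_k\binom{n+k-1}{k}\binom{n-1}{k}$ is the combinatorial heart of the theorem, and ``an outermost-tube decomposition relative to a chosen base vertex'' is only a gesture toward it. The paper's proof runs through Lemmas \ref{lem:schroder-bij} and \ref{lem:cycle-tubings}: tubings on the interval graph correspond to Schr\"oder paths, and the cycle case requires a marked step together with a cut-and-swap of subpaths; the paper explicitly describes this bijection as novel. So this step needs an actual argument rather than an appeal to a routine bijection, and as written it is the one genuine gap in your plan.
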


Lyndon structures are common in the study of necklace-type objects. In Section \ref{sect:festoons} we define \emph{festoons}: necklaces in which beads vary in both colour and length. If $X_n$ is the set of festoons of total length $n$ where beads of length $i$ can be coloured in $c_i$ ways, then $(X_n)$ is a Lyndon structure whose enumeration sequence $(a_n)$ is associated to $(c_n)$ as in Theorem \ref{thm:abc-intro}. Hence, any Gauss congruence is enumerated by festoons on a given set of beads, although we require a signed count when $c_n$ takes negative values (see Section \ref{sect:signed}).

For example, when $c_1=2$, $c_3=1$ and $c_n=0$ otherwise, possible elements of $X_9$ include:
\[\begin{tikzpicture}[scale = 0.8]
\draw \tube{0}{0}{1}{0.25}{0*40}{2*40};
\draw \tube{0}{0}{1}{0.25}{3*40}{5*40};
\draw \tube{0}{0}{1}{0.25}{6*40}{8*40};

\draw \tube{3}{0}{1}{0.25}{1*40}{3*40};
\draw \tube{3}{0}{1}{0.25}{4*40}{6*40};
\draw \tube{3}{0}{1}{0.25}{7*40}{9*40};

\draw [fill = blu!20] \tube{6}{0}{1}{0.25}{0*40}{0*40};
\draw [fill = pnk!20] \tube{6}{0}{1}{0.25}{1*40}{1*40};
\draw [fill = blu!20] \tube{6}{0}{1}{0.25}{2*40}{2*40};
\draw \tube{6}{0}{1}{0.25}{3*40}{5*40};
\draw [fill = pnk!20] \tube{6}{0}{1}{0.25}{6*40}{6*40};
\draw [fill = blu!20] \tube{6}{0}{1}{0.25}{7*40}{7*40};
\draw [fill = blu!20] \tube{6}{0}{1}{0.25}{8*40}{8*40};

\draw [fill = pnk!20] \tube{9}{0}{1}{0.25}{0*40}{0*40};
\draw \tube{9}{0}{1}{0.25}{1*40}{3*40};
\draw [fill = pnk!20] \tube{9}{0}{1}{0.25}{4*40}{4*40};
\draw \tube{9}{0}{1}{0.25}{5*40}{7*40};
\draw [fill = blu!20] \tube{9}{0}{1}{0.25}{8*40}{8*40};

\foreach \ang in {0,1,2,3,4,5,6,7,8} {
	\draw [fill = pnk!20] ($(12,0)+(40*\ang:1)$) circle (0.25);
}

\foreach \x in {0,1,2,3,4} {
	\draw (3*\x,0) circle (1);
}
\foreach \x in {0,1,2,3,4} {
	\foreach \ang in {0,1,2,3,4,5,6,7,8} {
		\draw [fill = black] ($(3*\x,0)+(40*\ang:1)$) circle (0.1);
	}
}
\end{tikzpicture}\]
For festoons of a single colour with given content, we obtain the following result.
\begin{theorem}\label{thm:festoon-intro}
Let $X$ be the set of festoons where $n_i$ beads have length $i$ for every $i\geq 1$. Set $n:=n_1+n_2+\cdots$ and $N:=n_1+2n_2+3n_3+\cdots$. Then $\Cyc_N$ acts on $X$ by rotation, and if
\[f(q)=\frac{[N]_q}{[n]_q}\qbinom{n}{n_1,n_2,\dots}\]
then $(X,\Cyc_N,f(q))$ satisfies the cyclic sieving phenomenon.
\end{theorem}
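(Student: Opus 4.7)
The plan is to verify directly that $f(\omega_d)=\#X^{\Cyc_d}$ for every divisor $d$ of $N$, using the classical Reiner--Stanton--White cyclic sieving for $q$-multinomial coefficients as the key input.

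First I would establish the enumeration $\#X=\tfrac{N}{n}\binom{n}{n_1,n_2,\dots}$ via an $n$-to-$1$ correspondence: cutting a festoon at any of its $n$ bead boundaries produces a pair consisting of a linear word in the beads and a starting position in $\Z/N\Z$, and conversely each of the $N\binom{n}{n_1,\dots}$ such pairs glues to a unique festoon. This same argument refines to enumerate $\Cyc_d$-fixed festoons: since $\Cyc_d\leq\Cyc_N$ is generated by rotation by $e:=N/d$, the fixed festoons are precisely the $e$-periodic ones, i.e., those that decompose as $d$ copies of a festoon on $\Z/e\Z$ of content $(n_1/d,n_2/d,\dots)$. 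This forces $d\mid n_i$ for every $i$; when it holds, the count becomes $\tfrac{e}{n/d}\binom{n/d}{n_1/d,\dots}=\tfrac{N}{n}\binom{n/d}{n_1/d,\dots}$, and otherwise it is zero.

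Next I would evaluate $f(\omega_d)$. The RSW cyclic sieving for $q$-multinomials gives
\[\qbinom{n}{n_1,n_2,\dots}_{\omega_d}=\begin{cases}\binom{n/d}{n_1/d,\dots}&\text{if }d\mid n_i\text{ for every }i,\\ 0&\text{otherwise.}\end{cases}\]
For the prefactor, when $d\mid n_i$ for every $i$ (so $d$ divides both $n$ and $N$), the ratio $\tfrac{[N]_q}{[n]_q}=\tfrac{q^N-1}{q^n-1}$ has matching simple zeros at $\omega_d$, and L'H\^opital's rule yields $\tfrac{N}{n}\omega_d^{N-n}=\tfrac{N}{n}$ since $d\mid N-n$. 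In the remaining cases either the multinomial factor or the prefactor already vanishes at $\omega_d$, forcing $f(\omega_d)=0$; either way this matches the fixed-point count.

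The chief obstacle is routine bookkeeping rather than any hidden subtlety: the enumeration of $X$ and its $\Cyc_d$-fixed refinement share a single $n$-to-$1$ argument, and the prefactor analysis reduces to a short cyclotomic calculation once the $q$-multinomial CSP is in hand.
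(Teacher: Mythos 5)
Your argument is essentially the paper's, unpacked: the paper proves this statement as Theorem \ref{thm:festoon-A}, whose three ingredients are exactly yours --- the fixed points of $\Cyc_d$ are the $d$-fold repetitions of festoons of content $(n_i/d)$, the enumeration $\#X=\tfrac{N}{n}\binom{n}{n_1,n_2,\dots}$ comes from the same $n$-to-$1$ double count, and the root-of-unity evaluation of $f$ is carried out in Theorem \ref{thm:fund} by the same cyclotomic limit computation you describe (the paper uses \cite[Lemma 2.4]{sagan} where you use L'H\^opital together with the $q$-multinomial evaluation). The only packaging difference is that the paper routes the conclusion through the Lyndon-structure/$q$-Gauss-congruence formalism (Theorem \ref{thm:lyndon} and Proposition \ref{prop:root-sub}) rather than verifying the CSP equalities directly; the computational content is identical.

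One step is missing from your write-up: the CSP as defined in the paper requires $f(q)$ to be an \emph{integer polynomial}, and your limit computations at $\omega_d$ for $d\mid N$ do not by themselves rule out poles of the rational expression $\tfrac{[N]_q}{[n]_q}\qbinom{n}{n_1,n_2,\dots}$ at primitive $e$-th roots of unity with $e\mid n$ but $e\nmid N$. The paper handles this in Theorem \ref{thm:fund} by factoring through $[\gcd(n_1,n_2,\dots)]_q$ and citing \cite[Theorem 3.1]{qmulti-div}. Within your own toolkit the fix is short: for such $e$ one has $e\nmid n_i$ for some $i$ (else $e$ would divide $N=\sum_i in_i$), so the $q$-multinomial vanishes at $\omega_e$ and cancels the simple zero of $[n]_q$ there; since $[n]_q$ is monic, integrality of the coefficients follows. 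You should add this observation before invoking the CSP.
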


Importantly, our entire theory is extend to the case where sequences are indexed not just by the positive integers, but by an arbitrary \emph{ranked semigroup} (see Section \ref{sect:semigroups} for a definition). This expanded scope allows us to view many well-known families of integers, such as the binomial coefficients, as Gauss congruences. Likewise, many well-known cyclic sieving results become examples of Lyndon structures. The main benefit to this enlarged definition is the use of novel morphism-theoretic tools for generating $q$-Gauss congruences, which we cover in Section \ref{sect:q-gauss}.

Technical proofs for some results are delayed until Section \ref{sect:proofs}.

\section{Semigroups}\label{sect:semigroups}

We give the necessary definitions and examples of semigroups which will be used frequently throughout the paper to index sequences.

A \emph{commutative semigroup} is a pair $(S,+)$ consisting of a set $S$ and an associative, commutative operation $+:S\times S\to S$. We usually write $S$ as shorthand for $(S,+)$. A \emph{morphism} of commutative semigroups is a map $\phi:S\to T$ such that $\phi(s_1+s_2)=\phi(s_1)+\phi(s_2)$ for every $s_1,s_2\in S$. Let $\Z_{\geq 1}$ denote the semigroup of positive integers with the usual addition.

\begin{definition}
A \emph{ranked semigroup} is a commutative semigroup $S$ with a specified morphism $\rk:S\to\Z_{\geq 1}$, called the \emph{rank} function.
\end{definition}
\begin{remark}
Such semigroups are known as metrical semigroups (in \cite{metrical}) or semigroups with length (in \cite{howie}). They have no additive identity.
\end{remark}

A semigroup $S$ has \emph{finite decomposition} if there are only finitely many ways to express each $s\in S$ in the form $s=s_1+\cdots+s_k$ for $s_i\in S$. All finitely-generated ranked semigroups have this property.

Fix a ranked semigroup $S$ and $s,t\in S$. Define $s-t:=\{u\mid u+t=s\}$. We define a partial order on $S$ by setting $t\leq s$ if $s=t$ or $s-t$ is nonempty. We call $S$ \emph{cancellative} if $|s-t|\leq 1$ for every $s,t\in S$.

For $d\geq 1$, define $d\cdot s:=s+s+\cdots+s$ with $d$ summands. If $s=d\cdot t$ for some $s,t\in S$, we write $t\mid s$ and set $s/t:=\rk(s)/\rk(t)=d$. The relation $|$ is a partial order on $S$. We also define $s/d:=\{t\mid d\cdot t=s\}$ and write $d\mid s$ if $s/d$ is nonempty. A semigroup is \emph{torsion-free} if $|s/d|\leq 1$ for every $s\in S$ and $d\geq 1$.

Our first example is the free ranked semigroup on $A$, where the rank of each element is determined by a function $\psi$ (although we allow $\psi$ to take arbitrary integer values).

\begin{example}\label{ex:dual}
Fix an arbitrary set $A$ and a function $\psi:A\to\Z$. For $\alpha:A\to\Z_{\geq 0}$, set
\[\psi(\alpha):=\sum_{a\in A}\psi(a)\alpha(a)\quad\text{and}\quad|\alpha|:=\sum_{a\in A}\alpha(a).\]
Let $A^\psi$ be the ranked semigroup of functions
\[\{\alpha:A\to \Z_{\geq 0}\mid 0<|\alpha|,\psi(\alpha)<\infty\}\]
with pointwise addition and rank function $\rk(\alpha)=\psi(\alpha)$.

As a special case, define $*:A\to\Z$ by $a\mapsto 1$ for every $a\in A$. Then $*(\alpha)=|\alpha|$, so $A^*$ is the ranked semigroup of functions with $0<|\alpha|<\infty$, and $\rk(\alpha)=|\alpha|$.
\end{example}

The ranked semigroup $A^\psi$ is cancellative, torsion-free and has finite decomposition. For $d\geq 1$ and $\alpha\in A^\psi$, we have $d\mid \alpha$ if and only if $d\mid \alpha(a)$ for every $a\in A$, in which case $(\alpha/d)(a)=\alpha(a)/d$. If $A=\{1\}$ and $\psi(1)=1$, then $A^\psi\cong\Z_{\geq 1}$.

\begin{definition}\label{def:chains}
Let $S$ be a ranked semigroup and let $T$ be a commutative semigroup. Define the ranked semigroup
\[S[T]:=\{(s,t)\mid s\in S,\,t\in T\}\]
with $(s_1,t_1)+(s_2,t_2):=(s_1+s_2,t_1+t_2)$ and $\rk(s,t):=\rk(s)$.
\end{definition}

We call this operation \emph{chaining}. If $S$ and $T$ are ranked semigroups and $U$ is a commutative semigroup, then $S[T][U]\cong S[T[U]]$ under $((s,t),u)\mapsto (s,(t,u))$, so we write this element as $(s,t,u)$. Likewise, we have $S[T][U]\cong S[U][T]$ under $(s,t,u)\mapsto (s,u,t)$.

\section{Gauss congruence}

We extend Gauss congruence to sequences $(-_s)$ indexed by a ranked semigroup $S$ and state the analogue of Theorem \ref{thm:abc-intro}. We generalise a number of known identities relating Gauss congruences to their parameter sequences.

\subsection{Definition}

\begin{definition}\label{def:gauss-cong}An integer sequence $(a_s)$ satisfies \emph{Gauss congruence} (on $S$) if
\[\sum_{t\mid s}\mu(s/t)a_t\equiv 0\pmod{\rk(s)}\]
for every $s\in S$.\end{definition}
\begin{remark}\label{rmk:gauss-poset}
Gauss congruence was defined on arbitrary posets in \cite{dold-poset}, and viewing $S$ as a poset under division recovers the above definition. For more information, see the proof of Theorem \ref{thm:abc} in Section \ref{sect:proofs}.
\end{remark}

\subsection{Characterisation}

\begin{theorem}\label{thm:abc}Let $S$ be a ranked semigroup with finite decomposition and $(a_s)$ a sequence of integers indexed by $S$. The following are equivalent:
\begin{itemize}
\item The sequence $(a_s)$ satisfies Gauss congruence.
\item There exists an integer sequence $(b_s)$ such that for every $s\in S$:
\begin{equation}\label{eq:abS}a_s=\sum_{t\mid s}b_t\rk(t)\end{equation}
\item There exists an integer sequence $(c_s)$ such that for every $s\in S$:
\begin{equation}\label{eq:ac1S}a_s=\sum_{s_1+s_2+\cdots=s}\rk(s_1)c_{s_1}c_{s_2}\cdots.\end{equation}
\end{itemize}
\end{theorem}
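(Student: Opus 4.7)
The plan is to establish the equivalence in two Möbius-type steps and one generating-function step: (1)$\Leftrightarrow$(2) by Möbius inversion on $S$'s divisibility preorder, (3)$\Rightarrow$(2) by a necklace orbit argument, and (2)$\Rightarrow$(3) via a cyclotomic-type identity in the completed semigroup algebra.

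For (1)$\Leftrightarrow$(2), the formula in (2) forces $b_s=\tfrac{1}{\rk(s)}\sum_{t\mid s}\mu(s/t)a_t$, which is integer-valued precisely when $(a_s)$ satisfies Gauss congruence. The inversion itself reduces to the identity
\[\sum_{u\mid t\mid s}\mu(t/u)=[u=s],\]
which I would verify via the bijection $d\mapsto d\cdot u$ between positive divisors $d$ of the integer $\rk(s)/\rk(u)$ and intermediates $t$ with $u\mid t\mid s$. This step uses only that $\rk$ is a morphism, so torsion-freeness is unnecessary.

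For (3)$\Rightarrow$(2), I would argue enumeratively. Viewing $c_t$ as a color count for beads of type $t\in S$, the sum in (3) enumerates labeled cyclic bead arrangements on $\Z/\rk(s)$ of semigroup sum $s$, via the bijection with (ordered composition, offset in first bead) pairs. The cyclic group $\Z/\rk(s)$ acts by rotation, and orbits of size $\rk(t)$ for $t\mid s$ correspond to aperiodic cyclic arrangements of sum $t$ on $\Z/\rk(t)$, each contributing $\rk(t)$ labeled arrangements. Setting $b_t$ equal to the number of such aperiodic orbits yields $a_s=\sum_{t\mid s}\rk(t)b_t$.

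For (2)$\Rightarrow$(3), I would work in the completed semigroup algebra $R:=\Z[\![\x^s:s\in S]\!]$, which is well-defined thanks to finite decomposition. Given integer $(b_s)$, define
\[1-C(\x):=\prod_{t\in S}(1-\x^t)^{b_t}\in R.\]
Each factor has integer coefficients for any integer exponent, so an integer sequence $(c_s)$ can be read off. Let $D$ be the $\Z$-linear derivation on $R$ with $D(\x^s):=\rk(s)\x^s$; by logarithmic differentiation,
\[\frac{DC(\x)}{1-C(\x)}=-D\log(1-C(\x))=\sum_t\rk(t)b_t\frac{\x^t}{1-\x^t}=\sum_s a_s\x^s=A(\x),\]
using (2) in the penultimate equality. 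Expanding $A(\x)=DC(\x)\sum_{k\geq 0}C(\x)^k$ and reading off the coefficient of $\x^s$ recovers (3). The main obstacle is justifying the formal infinite-product and logarithm manipulations in $R$; this amounts to checking that finite decomposition makes each coefficient $[\x^s]$ a finite sum over semigroup decompositions of $s$, after which the manipulations reduce to classical identities.
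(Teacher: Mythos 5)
Your equivalence (1)$\Leftrightarrow$(2) is the same M\"obius-inversion argument as the paper's: the key point in both is that the interval $\{t: u\mid t\mid s\}$ is order-isomorphic to the divisor lattice of $\rk(s)/\rk(u)$ via $d\mapsto d\cdot u$, so the classical M\"obius function applies; you are also right that this needs finite decomposition but not torsion-freeness. Where you genuinely diverge is the third condition. The paper routes both directions of (1)$\Leftrightarrow$(3) through its $q$-analogue machinery: the forward direction by exhibiting the explicit $q$-Gauss congruence of Theorem \ref{thm:abc-q}(c) specializing to $(a_s)$ at $q=1$, and the converse by solving for a rational $(c_s)$ and deriving a contradiction from a rounded sequence $(\lfloor c_s\rfloor)$. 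Your route is self-contained: the logarithmic-derivative identity $DC/(1-C)=A$ attached to $1-C=\prod_t(1-\x^t)^{b_t}$ is exactly the semigroup-algebra generalisation of the paper's identity (\ref{eq:bc}) combined with the recurrence (\ref{eq:ac-recS}), and it gives an explicit integer $(c_s)$ from an integer $(b_s)$ rather than an indirect integrality argument. The formal manipulations you defer (infinite products, $(1-C)^{-1}=\sum_k C^k$, term-by-term differentiation) are all legitimate here because $C$ has no constant term and finite decomposition makes every coefficient a finite sum.

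The one genuine gap is in your (3)$\Rightarrow$(2). The necklace/orbit argument interprets $c_t$ as a number of colours, so it establishes integrality of $(b_s)$ only when every $c_t\geq 0$; the theorem asserts the implication for an arbitrary integer sequence $(c_s)$, and a signed orbit count does not obviously produce an integer orbit number. (The paper faces the same issue in Theorem \ref{thm:abc-q}(c) and handles signs with the twist $\eps_n(q)$; see also Section \ref{sect:signed}.) Fortunately your own generating-function setup closes the gap without necklaces: the correspondence $(b_s)\mapsto(c_s)$ defined by $1-C=\prod_t(1-\x^t)^{b_t}$ is triangular with unit diagonal with respect to the rank grading, since the coefficient of $\x^s$ equals $-b_s$ plus an integer polynomial in the $b_t$ with $t<s$, of which there are finitely many. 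Hence it is a bijection of $\Z^S$ onto itself, so any integer $(c_s)$ arises from an integer $(b_s)$, and the identity $DC/(1-C)=A$ then shows the $(a_s)$ defined by (\ref{eq:ac1S}) equals $\sum_{t\mid s}\rk(t)b_t$. Alternatively, for each fixed $s$ the quantity $b_s$ is a rational-coefficient polynomial in the finitely many $c_t$ with $t\leq s$ that is integer-valued on nonnegative integer inputs, hence integer-valued on all integer inputs. Either patch makes your argument complete; as written, the enumerative step does not cover the full statement.
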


\begin{example}
Fix a formal power series $D(t)=d_0+d_1t+d_2t^2+\cdots$ with integer coefficients. The sequence $(a_{n,k})$ given by
\[a_{n,k}:=[t^k]D(t)^n\]
satisfies Gauss congruence on $\Z_{\geq 1}[\Z]$. The associated sequence $(c_{n,k})$ is given by $c_{1,k}:=d_k$ and $c_{n,k}:=0$ for $n>1$. When $D(t)=1+t$, $1/(1-t)$ and $t/(1-t)$ respectively we obtain the Gauss congruences
\[a_{n,k}=\binom{n}{k},\qquad a_{n,k}=\binom{n+k-1}{k},\quad\text{and}\quad a_{n,k}=\binom{k-1}{k-n}\]
where the binomial coefficient is zero outside of its usual range.
\end{example}

By \cite[Theorem 44]{dold-poset} (also see the proof of Theorem \ref{thm:abc}), the M\"obius function in the definition of Gauss congruence can be replaced with other number theoretic functions such as Euler's totient $\phi$.

\begin{proposition}\label{prop:gauss-other}
Let $\varphi:\Z_{\geq 1}\to\Z$ be a function such that $\varphi(1)=\pm 1$ and 
\[\sum_{d\mid n}\varphi(n/d)\equiv 0\pmod{n}\]
for every $n\geq 1$. Then $(a_s)$ satisfies Gauss congruence if and only if
\[\sum_{t\mid s}\varphi(s/t)a_t\equiv 0\pmod{\rk(s)}\]
for every $s\in S$.
\end{proposition}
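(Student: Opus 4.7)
The plan is to prove the two implications separately. For the forward direction (Gauss implies the $\varphi$-condition) I would use Theorem \ref{thm:abc} to write $a_s = \sum_{t\mid s} b_t\rk(t)$ for some integer sequence $(b_t)$ and then interchange summations:
\[\sum_{t\mid s}\varphi(s/t)a_t = \sum_{u\mid s} b_u \rk(u)\sum_{j\mid s/u}\varphi((s/u)/j) = \rk(s)\sum_{u\mid s} b_u\,\beta(s/u),\]
where $\beta(m) := \frac{1}{m}\sum_{d\mid m}\varphi(m/d)$ is an integer by hypothesis and I have used $\rk(u)(s/u) = \rk(s)$.

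For the reverse implication, since $\varphi(1) = \pm 1$ the Dirichlet inverse $\varphi^{-1}\colon\Z_{\geq 1}\to\Z$, defined on positive integers by $\sum_{d\mid n}\varphi(n/d)\varphi^{-1}(d) = [n=1]$, is integer-valued by the usual recursive argument. Setting $\tilde a_s := \sum_{t\mid s}\varphi(s/t)a_t = \rk(s)\tilde b_s$ with $\tilde b_s\in\Z$ (by hypothesis) and inverting gives $a_s = \sum_{t\mid s}\varphi^{-1}(s/t)\rk(t)\tilde b_t$. Substituting into the M\"obius sum and swapping the order of summation yields
\[\sum_{t\mid s}\mu(s/t)a_t = \sum_{u\mid s}\rk(u)\tilde b_u\,\gamma(s/u),\qquad\text{with}\qquad\gamma(m) := \sum_{j\mid m}\mu(m/j)\varphi^{-1}(j).\]
Since $\rk(u)(s/u) = \rk(s)$, the M\"obius condition at $s$ follows provided $m\mid\gamma(m)$ for every $m\geq 1$ — equivalently, that $(\varphi^{-1}(n))_{n\geq 1}$ is itself a Gauss sequence on $\Z_{\geq 1}$.

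To establish this auxiliary claim, I would reformulate the hypothesis as $\sum_{d\mid n}\varphi(d) = n\psi(n)$ for a unique integer sequence $\psi$ with $\psi(1) = \pm 1$, and set $f(n) := n\psi(n)$. From the Dirichlet identity $\varphi = \mu \ast f$ one deduces $\varphi^{-1} = f^{-1}\ast\mathbf{1}$ and hence $\gamma = \mu\ast\varphi^{-1} = f^{-1}$, so it suffices to show $n\mid f^{-1}(n)$ for every $n\geq 1$. I would prove this by induction on $n$: the defining recursion
\[\psi(1)\,f^{-1}(n) = -\sum_{d\mid n,\,d<n}(n/d)\,\psi(n/d)\,f^{-1}(d)\]
has each summand on the right divisible by $n$, since $(n/d)\cdot d = n$ and $f^{-1}(d)$ is a multiple of $d$ by the inductive hypothesis.

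The main obstacle is this final inductive step, as it is the only place where the divisibility hypothesis on $\varphi$ (equivalently, the integrality of $\psi$) is used essentially. Everything else is formal Dirichlet manipulation, which lifts cleanly from $\Z_{\geq 1}$ to the semigroup $S$ because every divisor quotient $s/t$ is by definition an honest positive integer.
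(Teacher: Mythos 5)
Your argument is correct, and it is more self-contained than what the paper offers: the paper does not prove Proposition~\ref{prop:gauss-other} at all, instead citing \cite[Theorem 44]{dold-poset} and pointing at the machinery in the proof of Theorem~\ref{thm:abc}. Your forward direction is in the spirit of that machinery (reduce to the $(b_s)$-parametrisation and use the key identity $\rk(u)\,(s/u)=\rk(s)$ together with the fact, established in the paper's proof of Theorem~\ref{thm:abc}, that the interval $[u,s]$ in the division order is isomorphic to the divisor poset of the integer $s/u$ --- this is exactly what licenses your interchange of summation and the appearance of $\beta(s/u)$). Your reverse direction is genuinely different: rather than re-running poset M\"obius inversion with $\varphi$ in place of $\mu$, you push everything down to classical Dirichlet convolution on $\Z_{\geq 1}$ and reduce the whole proposition to the single arithmetic fact that $n\mid f^{-1}(n)$ when $f(n)=n\psi(n)$ with $\psi$ integer-valued and $\psi(1)=\pm1$; your induction for that fact is correct, as is the telescoping $\mu\ast\varphi^{-1}=\mu\ast\mathbf{1}\ast f^{-1}=f^{-1}$. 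What this buys is a proof independent of the external reference, and it makes transparent that the only role of the divisibility hypothesis on $\varphi$ is the integrality of $\psi$; a small further economy you could note is that $\mu$ itself satisfies the hypotheses of the proposition, so your reverse-direction transfer argument alone yields both implications by symmetry. Two minor points of hygiene: your appeal to Theorem~\ref{thm:abc} uses only the $(a_s)\leftrightarrow(b_s)$ equivalence, which (as the paper's proof shows) needs only M\"obius inversion on the division poset and not the finite-decomposition hypothesis in its full strength; and the finiteness of the divisor sets $\{t: t\mid s\}$, needed for all of these sums to make sense, is already implicit in Definition~\ref{def:gauss-cong}, so you inherit it for free.
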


\subsection{Identities}\label{sect:identities}

We generalise identities between the sequences $(a_n)$, $(b_n)$ and $(c_n)$ in the case $S=\Z_{\geq 1}$ appearing in \cite{gauss-newton2} and \cite{gauss-newton1}.

For each summand of (\ref{eq:ac1S}), define a function $\alpha:S\to\Z_{\geq 0}$ by setting $\alpha(t)$ as the number of occurences of $t$. Summing over all relevant such functions gives
\begin{equation}\label{eq:ac2S}a_s=\sum_{\substack{\alpha\in S^*\\\tau(\alpha)=s}}\frac{\rk(s)}{|\alpha|}\binom{|\alpha|}{\alpha}\prod_{t\in S} c_t^{\alpha(t)}\end{equation}
where $\tau:S^*\to S$ is given by $\tau(\alpha)=\sum_{s\in S}\alpha(s)s$, and
\[\binom{|\alpha|}{\alpha}:=\frac{(\sum_{a\in \alpha}\alpha(a))!}{\prod_{a\in A}\alpha(a)!}\]
for $\alpha\in A^\psi$ (or in this case, $S^*$).

Alternatively, we can rewrite (\ref{eq:ac1S}) as an almost-linear recurrence by `peeling off' the final term in each summand. Then
\begin{equation}\label{eq:ac-recS}a_s=\rk(s)c_s+\sum_{t+u=s}c_ta_u=\rk(s)c_s+\sum_{t<s}c_ta_{s-t}\end{equation}
where $a_{s-t}:=\sum_{u\in s-t}a_u$. 

For $S=\Z_{\geq 1}$, we can rewrite (\ref{eq:ac1S}) as
\begin{equation}\label{eq:M}a_n=\tr\begin{bmatrix}
c_1 & c_2 & c_3 & \cdots \\
1 & 0 & 0 \\
0 & 1 & 0 \\
\vdots & & \ddots
\end{bmatrix}^n\end{equation}
which can be used to prove that $a_n=\tr(M^n)$ satisfies Gauss congruence for any integer matrix $M$. We also have a direct relationship between $(b_n)$ and $(c_n)$ expressed as an equality of formal power series
\begin{equation}\label{eq:bc}\sum_{n\geq 1}c_nx^n=1-\prod_{n\geq 1}(1-x^n)^{b_n}.\end{equation}

Versions of these identities can be generalised by working over the ring of infinite $\Z$-linear combinations of symbols $\{x^s\mid s\in S\cup\{0\}\}$ with $x^sx^t=x^{s+t}$ for $s,t\in S$ and where $1:=x^0$ is the mutliplicative identity. This perspective in the case $S=\{1,\dots,r\}^*$ was studied in \cite{multivar}, although a different generalisation of Gauss congruence is used.

\section{q-Gauss congruence}\label{sect:q-gauss}

As with Gauss congruence, we extend the notion of $q$-Gauss congruence from \cite{q-congruences} to ranked semigroups. We build a number of morphism-theoretic tools for constructing these congruences. The main result is Theorem \ref{thm:abc-q}, which gives a number of explicit characterisations of $q$-Gauss congruences. This theory is foundational to finding examples of cyclic sieving results in Section \ref{sect:lyndon} onwards.

\subsection{Setup}

We work with sequences indexed by a fixed ranked semigroup $S$.

\begin{definition}\label{def:q-gauss}
A sequence of integer polynomials $(f_s(q))$ satisfies $q$-\emph{Gauss congruence} (on $S$) if
\[\sum_{t\mid s}\mu(s/t)f_t(q^{s/t})\equiv 0\pmod{[\rk(s)]_q}\]
for every $s\in S$.
\end{definition}

Gorodetsky in \cite{q-congruences} gave an extremely useful arithmetic characterisation of $q$-Gauss congruences, which holds in our generality. Recall that $\omega_d$ denotes a primitive $d^\text{th}$ root of unity.

\begin{proposition}\label{prop:root-sub}
A sequence of integer polynomials $(f_s(q))$ satisfies $q$-Gauss congruence if and only if
\[f_s(\omega_d)=\sum_{t\in s/d}f_t(1)\]
for every $s\in S$ and $d\mid\rk(s)$.
\end{proposition}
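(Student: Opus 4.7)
The plan is to use the factorization $[\rk(s)]_q = \prod_{1 < d \mid \rk(s)} \Phi_d(q)$ into distinct cyclotomic polynomials: since $H_s(q) := \sum_{t \mid s} \mu(s/t) f_t(q^{s/t})$ has integer coefficients, divisibility by $[\rk(s)]_q$ is equivalent to the pointwise vanishing $H_s(\omega_d) = 0$ for every $d > 1$ with $d \mid \rk(s)$. The case $d = 1$ of the proposition is tautological (since $\omega_1 = 1$ and $s/1 = \{s\}$), so the argument reduces to $d > 1$ throughout.

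Reindexing $H_s(\omega_d)$ by the positive integer $k = s/t$ gives
\[H_s(\omega_d) = \sum_{k \mid \rk(s)} \mu(k) \sum_{t \in s/k} f_t(\omega_d^k),\]
where $\omega_d^k$ is a primitive $e$-th root of unity for $e = d/\gcd(d, k)$; one checks that $e \mid \rk(t)$ from $\mathrm{lcm}(d, k) \mid \rk(s)$, so the proposed formula may be applied to each inner evaluation $f_t(\omega_d^k)$ whenever it is assumed at strictly smaller ranks.

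For the ``if'' direction, I would substitute $f_t(\omega_d^k) = \sum_{u \in t/e} f_u(1)$ and swap the order of summation. Since $t = e u$ forces $s = kt = \mathrm{lcm}(d, k) \cdot u$, the coefficient of $f_u(1)$ collapses to $\sum_k \mu(k)$ over the $k$ with $\mathrm{lcm}(d, k) = \ell := \rk(s)/\rk(u)$, a quantity determined by $u$ alone. For the ``only if'' direction I would induct on $\rk(s)$: isolate the $k=1$ summand in $H_s(\omega_d) = 0$ to express $f_s(\omega_d)$ as a linear combination of $f_t(\omega_d^k)$ for $t$ of strictly smaller rank, apply the inductive hypothesis, and reorganize via the same M\"obius sum so that the coefficient of each $f_u(1)$ reduces to $[u \in s/d]$.

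Both implications therefore rest on the identity
\[\sum_{k \,:\, \mathrm{lcm}(d, k) = \ell} \mu(k) = \begin{cases} \mu(\ell) & \text{if } d = 1, \\ 0 & \text{if } d > 1, \end{cases}\]
which I expect to be the main technical step. I would prove it prime by prime: any prime of $\ell/d$ appearing in $\ell$ with multiplicity $\geq 2$ kills all squarefree $k$ from the sum, and otherwise the sum factors into independent contributions across the primes common to $d$ and $\ell$, each of which evaluates to $1 - 1 = 0$ whenever $d > 1$. Beyond this, only mild semigroup bookkeeping remains, since $\ell$ is forced by $\rk(u)$ and each $t$ in the inner sums is recovered uniquely from the pair $(k, u)$.
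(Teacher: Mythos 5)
Your argument is correct, and it takes a genuinely different route from the paper. You work directly with the defining M\"obius sum $H_s(q)=\sum_{t\mid s}\mu(s/t)f_t(q^{s/t})$, convert divisibility by $[\rk(s)]_q$ into vanishing at $\omega_d$ for $1<d\mid\rk(s)$ via the cyclotomic factorisation, and reduce everything to the arithmetic identity $\sum_{\mathrm{lcm}(d,k)=\ell}\mu(k)=0$ for $d>1$; your semigroup bookkeeping is sound (each $t$ in the triple sum is recovered as $e\cdot u$, and $\ell$ is forced by $\rk(u)$ since ranks are cancellative), and the prime-by-prime evaluation of the lcm--M\"obius sum checks out. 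The paper instead first proves that two integer polynomial sequences agreeing at $q=1$ are simultaneously $q$-Gauss congruent iff they agree mod $q^{\rk(s)}-1$, then reduces $f_s$ to degree $<\rk(s)$ and compares it with the canonical representative $g_s(q)=\sum_{t\mid s}[\rk(t)]_{q^{s/t}}b_t$ built from the M\"obius-inverted sequence $(b_s)$, whose root-of-unity values are computed directly; a degree argument forces $f_s=g_s$ and integrality of $(b_s)$. The paper's route buys more as a byproduct --- it yields the uniqueness-mod-$(q^{\rk(s)}-1)$ statement and the second formula of Theorem \ref{thm:abc-q} essentially for free --- whereas your route is more self-contained and purely arithmetic, at the cost of the lcm--M\"obius lemma, which you should state and prove explicitly if you write this up. Both arguments induct on rank, and your isolation of the $k=1$ term in the ``only if'' direction (giving coefficient $-\sum_{k>1,\,\mathrm{lcm}(d,k)=\ell}\mu(k)=[d=\ell]$ for $d>1$) is exactly the right bookkeeping.
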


Before giving examples of $q$-Gauss congruences, we provide a number of tools for translating congruences between ranked semigroups. Proofs are given in Section \ref{sect:proofs}, and make heavy use of Proposition \ref{prop:root-sub}. We refer to these operations by their names throughout the paper.

\begin{definition}
A morphism of ranked semigroups $\phi:S\to S'$ is;
\begin{itemize}
\item \emph{rank-multiplying} if $\rk(s)\mid \rk(\phi(s))$ and
\item \emph{rank-dividing} if $\rk(\phi(s))\mid\rk(s)$
\end{itemize}
for every $s\in S$.
\end{definition}

\begin{lemma}[Pushforward]\label{lem:pushforward}
Suppose $\phi:S\to T$ is a rank-dividing morphism such that $\phi^{-1}(t)$ is finite for every $t\in T$. If $(f_s(q))$ satisfies  $q$-Gauss congruence on $S$, then
\[g_t(q):=\sum_{s\in\phi^{-1}(t)}f_s(q)\]
satisfies $q$-Gauss congruence on $T$.
\end{lemma}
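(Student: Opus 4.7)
The plan is to verify the root-of-unity characterisation of Proposition \ref{prop:root-sub} directly: for each $t \in T$ and each $e \mid \rk(t)$, the goal is to establish
\[g_t(\omega_e) = \sum_{u \in t/e} g_u(1),\]
with both sides interpreted as finite sums thanks to the assumption that every fibre of $\phi$ is finite.

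First, I would expand the left-hand side by definition to $\sum_{s \in \phi^{-1}(t)} f_s(\omega_e)$ and apply Proposition \ref{prop:root-sub} to each $f_s$. This application is legitimate because the rank-dividing hypothesis forces $\rk(t) \mid \rk(s)$ for every $s \in \phi^{-1}(t)$, and combined with $e \mid \rk(t)$ this yields $e \mid \rk(s)$. After substitution the LHS becomes the double sum $\sum_{s \in \phi^{-1}(t)} \sum_{s' \in s/e} f_{s'}(1)$, indexed by pairs $(s,s')$ satisfying $e \cdot s' = s$ and $\phi(s) = t$.

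Next, I would unpack the right-hand side symmetrically into $\sum_{u \in t/e} \sum_{s' \in \phi^{-1}(u)} f_{s'}(1)$, indexed by pairs $(u, s')$ satisfying $\phi(s') = u$ and $e \cdot u = t$. The key step is to recognise that $s' \in S$ parametrises both indexing sets: on the LHS, $s = e \cdot s'$ is forced by $s'$, leaving the residual constraint $\phi(e \cdot s') = t$; on the RHS, $u = \phi(s')$ is forced by $s'$, leaving the residual constraint $e \cdot \phi(s') = t$. Because $\phi$ is a semigroup morphism, $\phi(e \cdot s') = e \cdot \phi(s')$, so both sums collapse to $\sum_{s' : e \cdot \phi(s') = t} f_{s'}(1)$, completing the verification.

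The main subtlety to watch for is that $S$ and $T$ are not assumed torsion-free, so the sets $s/e$ and $t/e$ may contain multiple elements (or none), and the parametrisation by $s'$ must be treated as a bijection of indexing sets rather than a pointwise identification. Once that bookkeeping is in place, the morphism identity $\phi(e \cdot s') = e \cdot \phi(s')$ does the real work, and the rank-dividing hypothesis is used solely to license the application of Proposition \ref{prop:root-sub} inside each fibre.
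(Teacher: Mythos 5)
Your proposal is correct and follows essentially the same route as the paper's proof: verify the root-of-unity criterion of Proposition \ref{prop:root-sub}, expand $g_t(\omega_e)$ over the fibre, apply the criterion to each $f_s$ (licensed by the rank-dividing hypothesis), and then re-index the resulting double sum. Your explicit parametrisation of both indexing sets by $s'$ via the identity $\phi(e\cdot s')=e\cdot\phi(s')$ is just a more careful spelling-out of the paper's one-line assertion that each such $s'$ occurs exactly once on each side.
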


Taking $\phi$ to be the rank function $\rk:S\to\Z_{\geq 1}$, we obtain that
\[g_n(q)=\sum_{\substack{s\in S\\\rk(s)=n}}f_s(q)\]
gives a $q$-Gauss congruence on $\Z_{\geq 1}$ whenever $(f_s(q))$ is a $q$-Gauss congruence on $S$.

\begin{lemma}[Pullback]\label{lem:pullback}
Suppose $\phi:S\to T$ is a rank-multiplying morphism such that $\phi$ restricts to a bijection $\phi:s/d\to\phi(s)/d$ for every $s\in S$ and $d\mid\rk(s)$. If $(g_t(q))$ satisfies $q$-Gauss congruence on $T$, then
\[f_s(q):=g_{\phi(s)}(q)\]
satisfies $q$-Gauss congruence on $S$.
\end{lemma}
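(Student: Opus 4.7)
The plan is to prove this by applying the root-of-unity characterisation of $q$-Gauss congruence from Proposition \ref{prop:root-sub} in both directions: use it for $(g_t(q))$ on $T$ to evaluate $g$ at roots of unity, and use it for $(f_s(q))$ on $S$ to check the conclusion. So the target to verify is
\[f_s(\omega_d)=\sum_{u\in s/d}f_u(1)\]
for every $s\in S$ and every $d\mid\rk(s)$.

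First I would observe that, since $\phi$ is rank-multiplying, $d\mid\rk(s)$ implies $d\mid\rk(s)\mid\rk(\phi(s))$, so $\phi(s)/d$ is a legal index-set to feed into Proposition \ref{prop:root-sub} for $g$. Applying that proposition to the element $\phi(s)\in T$ and the divisor $d$ yields
\[g_{\phi(s)}(\omega_d)=\sum_{v\in\phi(s)/d}g_v(1).\]

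Next I would translate the right-hand side back into $f$ using the bijection hypothesis. Since $\phi$ restricts to a bijection $s/d\to\phi(s)/d$, the sum over $v\in\phi(s)/d$ can be reindexed by $u\in s/d$ via $v=\phi(u)$, giving
\[g_{\phi(s)}(\omega_d)=\sum_{u\in s/d}g_{\phi(u)}(1)=\sum_{u\in s/d}f_u(1),\]
where the final equality uses $f_u(q)=g_{\phi(u)}(q)$ specialised at $q=1$. Since $f_s(\omega_d)=g_{\phi(s)}(\omega_d)$ by definition, this is exactly the identity we wanted, and Proposition \ref{prop:root-sub} then gives $q$-Gauss congruence for $(f_s(q))$ on $S$.

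I expect no serious obstacle here; the content of the lemma is essentially bookkeeping matching of the two hypotheses (rank-multiplying and the bijection on $d$-th roots) to the two ingredients of Proposition \ref{prop:root-sub} (the modulus $[\rk(s)]_q$ and the orbit-sum on the right). The only mild subtlety is to keep straight that $\rk$ is used to decide which roots $\omega_d$ are relevant in $S$ versus in $T$, and the rank-multiplying condition is precisely what guarantees that every $d$ admissible for $s$ remains admissible for $\phi(s)$; this is the reason the pullback direction needs rank-multiplying while the pushforward lemma needs rank-dividing.
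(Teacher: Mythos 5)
Your argument is correct and is essentially identical to the paper's proof: both verify the root-of-unity criterion of Proposition \ref{prop:root-sub} by noting $d\mid\rk(s)\mid\rk(\phi(s))$, evaluating $g_{\phi(s)}(\omega_d)$, and reindexing the sum over $\phi(s)/d$ by the assumed bijection with $s/d$. No issues.
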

\begin{remark}
When $S$ is torsion-free, the bijection condition on $\phi$ is equivalent to the following: if $d\mid\rk(s)$ and $d\mid\phi(s)$, then $d\mid s$.
\end{remark}

Taking $S'$ to be a subset of $S$ closed under addition and division, using the inclusion morphism $\iota:S'\to S$ we obtain that $(f_s(q))$ satisfies $q$-Gauss congruence on $S'$ whenever it does on $S$.

Finally, we consider multiplying $q$-Gauss congruences on torsion-free semigroups. We extend this to chains of semigroups from Definition \ref{def:chains}.

\begin{lemma}\label{lem:multiply}
If $S$ is torsion-free and $(f_s(q))$ and $(g_s(q))$ are $q$-Gauss congruences on $S$, then so is $(f_s(q)g_s(q))$.
\end{lemma}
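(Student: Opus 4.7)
The plan is to reduce the claim to Proposition \ref{prop:root-sub} and then exploit the torsion-free hypothesis in the most direct way possible. Set $h_s(q) := f_s(q)g_s(q)$. By Proposition \ref{prop:root-sub}, it suffices to verify that
\[h_s(\omega_d) = \sum_{t \in s/d} h_t(1)\]
for every $s \in S$ and every $d \mid \rk(s)$. Fix such $s$ and $d$. Applying the same proposition to $(f_s)$ and $(g_s)$ individually gives
\[h_s(\omega_d) = f_s(\omega_d)\,g_s(\omega_d) = \Bigl(\sum_{t \in s/d} f_t(1)\Bigr)\Bigl(\sum_{u \in s/d} g_u(1)\Bigr),\]
so everything comes down to comparing this double sum with $\sum_{v \in s/d} f_v(1)g_v(1)$.

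Here is where torsion-freeness does all the work: the hypothesis forces $|s/d| \leq 1$, so one of two things happens. If $s/d = \varnothing$, then both $\sum_t f_t(1)$ and $\sum_u g_u(1)$ vanish, and the right-hand sum is likewise empty, so both sides are zero. If $s/d = \{t_0\}$ for a unique $t_0$, then the product collapses to $f_{t_0}(1)g_{t_0}(1) = h_{t_0}(1)$, which is exactly $\sum_{v \in s/d} h_v(1)$. In either case the desired identity holds, and Proposition \ref{prop:root-sub} delivers $q$-Gauss congruence for $(h_s)$.

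There is essentially no obstacle beyond noticing that torsion-freeness is precisely what kills the unwanted cross terms $f_t(1)g_u(1)$ with $t \neq u$ that would otherwise appear when multiplying the two singleton-or-empty sums. Without this hypothesis, $s/d$ could contain multiple elements and the naive product identity would fail, so the lemma genuinely requires the assumption as stated. The extension to chains $S[T][U][\cdots]$ promised after the statement will follow by iterating this argument, since the definition of chaining keeps rank in the first coordinate and torsion-freeness of each factor is preserved under the constructions used in the paper.
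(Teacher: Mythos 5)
Your proof is correct and follows exactly the paper's argument: both reduce to the root-of-unity criterion of Proposition \ref{prop:root-sub} and use that torsion-freeness forces $|s/d|\leq 1$, so the product of the two (empty or singleton) sums equals the single sum $\sum_{t\in s/d}f_t(1)g_t(1)$. Your write-up simply spells out the empty versus singleton cases that the paper leaves implicit.
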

\begin{remark}
The proof allows a slight relaxation of the condition on $(g_s(q))$: we only need that $g_s(\omega_d)=g_{s/d}(1)$ for every $d\mid s$, rather than for every $d\mid \rk(s)$.
\end{remark}

\begin{lemma}[Prefix Chaining]\label{lem:chaining-pre}
Suppose $S$, $T$ and $U$ are torsion-free semigroups, and $S$ is ranked. If $(f_{s,t}(q))$ and $(g_{s,u}(q))$ satisfy $q$-Gauss congruence on $S[T]$ and $S[U]$ respectively, then
\[h_{s,t,u}(q):=f_{s,t}(q)g_{s,u}(q)\]
satisfies $q$-Gauss congruence on $S[T][U]$.
\end{lemma}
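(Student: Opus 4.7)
The plan is to verify the criterion of Proposition~\ref{prop:root-sub} directly: I need to show, for every $(s,t,u)\in S[T][U]$ and every $d\mid\rk(s,t,u)=\rk(s)$, that
\[h_{s,t,u}(\omega_d) = f_{s,t}(\omega_d)\,g_{s,u}(\omega_d) = \sum_{(s',t',u')\in(s,t,u)/d} h_{s',t',u'}(1).\]
So the whole argument reduces to evaluating each side and comparing them.

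First I would unpack the divisibility structure. Torsion-freeness of $S$, $T$, $U$ forces each of $(s,t)/d\subseteq S[T]$, $(s,u)/d\subseteq S[U]$, and $(s,t,u)/d\subseteq S[T][U]$ to be either empty or a singleton, and nonempty precisely when $d$ divides every coordinate. Since $d\mid\rk(s,t)=\rk(s)$, Proposition~\ref{prop:root-sub} applied to $(f_{s,t}(q))$ on $S[T]$ yields
\[f_{s,t}(\omega_d)=\begin{cases} f_{s/d,\,t/d}(1) & \text{if $d\mid s$ and $d\mid t$,} \\ 0 & \text{otherwise,} \end{cases}\]
and an analogous formula holds for $g_{s,u}(\omega_d)$ on $S[U]$.

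Multiplying, $h_{s,t,u}(\omega_d)$ vanishes unless $d$ divides each of $s$, $t$, and $u$ in their respective semigroups, in which case it equals $f_{s/d,\,t/d}(1)\,g_{s/d,\,u/d}(1) = h_{s/d,\,t/d,\,u/d}(1)$. The right-hand side vanishes in exactly the same cases, and in the surviving case reduces to the same singleton $h_{s/d,\,t/d,\,u/d}(1)$. The two sides therefore agree for all admissible $d$, and Proposition~\ref{prop:root-sub} delivers the $q$-Gauss congruence of $(h_{s,t,u}(q))$ on $S[T][U]$.

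The only real subtlety is the bookkeeping of three divisibility axes at once: $f_{s,t}(\omega_d)$ can vanish because either $d\nmid s$ or $d\nmid t$, and similarly for $g_{s,u}(\omega_d)$, so a careful case split is needed to confirm that both sides vanish in the same circumstances. Once this is tracked, the proof is a natural extension of Lemma~\ref{lem:multiply}, with the extra $U$-component handled by the same singleton-or-empty dichotomy arising from torsion-freeness.
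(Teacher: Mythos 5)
Your proposal is correct and follows essentially the same route as the paper: apply Proposition~\ref{prop:root-sub}, use torsion-freeness to reduce each set $(s,t)/d$, $(s,u)/d$, $(s,t,u)/d$ to an empty-or-singleton dichotomy, and check that both sides vanish in exactly the same cases and otherwise agree at $h_{s/d,\,t/d,\,u/d}(1)$. The case bookkeeping you flag as the only subtlety is precisely what the paper's (more terse) proof handles.
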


\begin{lemma}[Suffix Chaining]\label{lem:chaining-suf}
Suppose $S$, $T$ and $U$ are torsion-free semigroups, and $S$ and $T$ are ranked. If $(f_{s,t}(q))$ and $(g_{t,u}(q))$ satisfy $q$-Gauss congruences on $S[T]$ and $T[U]$ respectively, then
\[h_{s,t,u}(q):=f_{s,t}(q)g_{t,u}(q)\]
satisfies $q$-Gauss congruence on $S[T][U]$.
\end{lemma}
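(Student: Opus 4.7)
The plan is to apply the root-of-unity criterion in Proposition \ref{prop:root-sub} directly. Fix $(s,t,u) \in S[T][U]$ and $d \mid \rk(s,t,u) = \rk(s)$; I need to verify
\[f_{s,t}(\omega_d)\, g_{t,u}(\omega_d) = \sum_{(s',t',u') \in (s,t,u)/d} f_{s',t'}(1)\, g_{t',u'}(1).\]
The key simplification is torsion-freeness of each of $S$, $T$, $U$: the sets $(s,t,u)/d$, $(s,t)/d$ and $(t,u)/d$ are each singletons or empty, with $(s,t,u)/d = \{(s/d,t/d,u/d)\}$ if and only if $d$ divides each of $s,t,u$. So the right-hand side equals $f_{s/d,t/d}(1)\, g_{t/d,u/d}(1)$ when all three divisibilities hold and vanishes otherwise.

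I would then split into three cases according to which of $s, t, u$ are divisible by $d$. If $d$ divides all three, then $d \mid t$ gives $d \mid \rk(t)$, so Proposition \ref{prop:root-sub} applies to both $f$ and $g$ and directly delivers $f_{s,t}(\omega_d)=f_{s/d,t/d}(1)$ and $g_{t,u}(\omega_d)=g_{t/d,u/d}(1)$, whose product matches the right-hand side. If $d \mid s$ and $d \mid t$ but $d \nmid u$, then again $d \mid \rk(t)$ and Proposition \ref{prop:root-sub} applied to $g$ yields $g_{t,u}(\omega_d) = 0$ (the indexing set $(t,u)/d$ is empty), matching the vanishing right-hand side. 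Finally, if $d \nmid s$ or $d \nmid t$, then $(s,t)/d = \emptyset$; Proposition \ref{prop:root-sub} applied to $f$, which is licensed by the ambient hypothesis $d \mid \rk(s)$, gives $f_{s,t}(\omega_d) = 0$, and the right-hand side again vanishes.

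The main subtlety I expect is the rank asymmetry peculiar to suffix chaining: in Prefix Chaining both factors are indexed by semigroups whose rank comes from $S$, whereas here $g_{t,u}$ is a congruence on $T[U]$ where rank is $\rk(t)$, so the ambient hypothesis $d \mid \rk(s)$ does not by itself license a root-sub expansion of $g_{t,u}(\omega_d)$. Torsion-freeness resolves this: in every case where the expansion of $g$ is needed, we already have $d \mid t$, hence $d \mid \rk(t)$; and in the remaining regime $d \nmid t$ forces the $f$-factor to vanish before $g$ is ever consulted. For this reason I would not try to deduce the result as a corollary of Lemma \ref{lem:multiply}, since the naive extensions $\tilde f_{s,t,u}(q) := f_{s,t}(q)$ and $\tilde g_{s,t,u}(q) := g_{t,u}(q)$ to $S[T][U]$ fail to be $q$-Gauss congruences in general; the direct root-sub argument is both shorter and bypasses this obstruction.
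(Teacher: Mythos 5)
Your proof is correct and follows essentially the same route as the paper's: a case analysis on which of $s$, $t$, $u$ are divisible by $d$, using the root-of-unity criterion of Proposition \ref{prop:root-sub} together with torsion-freeness to reduce each set of the form $\cdot/d$ to a singleton or the empty set. Your explicit observation that $d\mid t$ (hence $d\mid\rk(t)$) is what licenses the root-of-unity expansion of $g_{t,u}$ on $T[U]$ is precisely the point the paper's proof uses implicitly, so there is nothing to add.
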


\subsection{Examples}

Recall the free ranked semigroup $A^\psi$ from Example \ref{ex:dual}. For $\alpha\in A^\psi$, define the $q$-multinomial coefficient
\[\qbinom{|\alpha|}{\alpha}:=\frac{[|\alpha|]_q\cdots [2]_q[1]_q}{\prod_{a\in A}[\alpha(a)]_q\cdots [2]_q[1]_q}.\]
We extend this definition to all functions $\alpha:A\to\Z$ with finite support by setting the coefficient to be zero when any $\alpha(s)$ is negative. A special case is the $q$-binomial coefficient
\[\qbinom{n}{k}:=\frac{[n]_q\cdots [2]_q[1]_q}{[k]_q\cdots [1]_q[n-k]_q\cdots[1]_q}.\]
\begin{theorem}\label{thm:fund}
The sequence $(f_\alpha(q))$ on $A^\psi$ given by
\[f_\alpha(q)=\frac{[\psi(\alpha)]_q}{[|\alpha|]_q}\qbinom{|\alpha|}{\alpha}\]
satisfies $q$-Gauss congruence.
\end{theorem}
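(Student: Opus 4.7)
The plan is to invoke Proposition \ref{prop:root-sub}: it suffices to establish, for each $\alpha\in A^\psi$ and each $d\mid\psi(\alpha)$, that
\[f_\alpha(\omega_d)=\sum_{\beta\in\alpha/d}f_\beta(1).\]
Since $A^\psi$ is torsion-free and cancellative, $|\alpha/d|\le 1$, and $\alpha/d$ is nonempty iff $d\mid\alpha(a)$ for every $a\in A$. As a preliminary step, I verify that $f_\alpha(q)\in\Z[q]$ by comparing exponents of each cyclotomic polynomial $\Phi_e(q)$ in numerator and denominator; the only delicate case is $e\mid|\alpha|$ with $e\nmid\psi(\alpha)$, and there $e$ must fail to divide some $\alpha(a)$, providing a compensating positive contribution $\lfloor|\alpha|/e\rfloor-\sum_a\lfloor\alpha(a)/e\rfloor\ge 1$ from the $q$-multinomial. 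The case $d=1$ is immediate, so assume $d>1$.

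Suppose first that $d\mid\alpha(a)$ for every $a$, and write $\beta:=\alpha/d$. Using $[n]_q=[d]_q[n/d]_{q^d}$, valid whenever $d\mid n$, the prefactor becomes
\[\frac{[\psi(\alpha)]_q}{[|\alpha|]_q}=\frac{[\psi(\alpha)/d]_{q^d}}{[|\alpha|/d]_{q^d}},\]
whose value at $q=\omega_d$ (where $q^d=1$) is $\psi(\alpha)/|\alpha|$. By the $q$-Lucas theorem, $\qbinom{|\alpha|}{\alpha}$ evaluates at $q=\omega_d$ to the ordinary multinomial $\binom{|\alpha|/d}{\beta}$, since every residue $\alpha(a)\bmod d$ vanishes. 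Multiplying gives $f_\alpha(\omega_d)=(\psi(\alpha)/|\alpha|)\binom{|\beta|}{\beta}=f_\beta(1)$.

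Otherwise pick $a_0\in A$ with $d\nmid\alpha(a_0)$, so $\alpha/d=\emptyset$; I must show $f_\alpha(\omega_d)=0$. If $d\nmid|\alpha|$ then $[|\alpha|]_{\omega_d}\neq 0$ while $[\psi(\alpha)]_{\omega_d}=0$, so the prefactor vanishes at $\omega_d$ and hence so does $f_\alpha(\omega_d)$. If instead $d\mid|\alpha|$, the prefactor evaluates at $\omega_d$ to the finite value $\psi(\alpha)/|\alpha|$ as before, so it suffices to show that $\qbinom{|\alpha|}{\alpha}$ vanishes there. Using the symmetry of the $q$-multinomial I place $a_0$ first, and apply $q$-Lucas to the split-off factor $\qbinom{|\alpha|}{\alpha(a_0)}$: with $|\alpha|\equiv 0$ and $\alpha(a_0)\equiv r_0\not\equiv 0\pmod d$, this produces a factor $\qbinom{0}{r_0}$ at $q=\omega_d$, which is zero by the convention that $\qbinom{n}{k}$ vanishes when $k>n$.

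The main obstacle is the interplay between the prefactor $[\psi(\alpha)]_q/[|\alpha|]_q$, which is not itself a polynomial, and the $q$-multinomial: one must track carefully whether each vanishes or stays finite at $\omega_d$ so that the product is well-defined. The factorisation $[n]_q=[d]_q[n/d]_{q^d}$ handles the shared $\Phi_d$ factor cleanly, and the $q$-Lucas theorem takes care of the rest.
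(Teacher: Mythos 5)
Your proof is correct, and it follows the same overall strategy as the paper — reduce to the root-of-unity criterion of Proposition \ref{prop:root-sub}, then split on whether $d$ divides every $\alpha(a)$ — but the technical ingredients differ at every step. For integrality, the paper factors $f_\alpha(q)$ as $\frac{[\psi(\alpha)]_q}{[\gcd(\alpha)]_q}\cdot\frac{[\gcd(\alpha)]_q}{[|\alpha|]_q}\qbinom{|\alpha|}{\alpha}$ and cites an external divisibility theorem for the second factor, whereas you count exponents of each cyclotomic $\Phi_e$ directly; your observation that $e\mid|\alpha|$ and $e\nmid\psi(\alpha)$ forces $e\nmid\alpha(a_0)$ for some $a_0$, which in turn forces $\lfloor|\alpha|/e\rfloor-\sum_a\lfloor\alpha(a)/e\rfloor\geq 1$, is exactly the compensation needed and is sound. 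For the evaluations, the paper applies Sagan's limit lemma for $[n]_q/[m]_q$ at $\omega_d$ factor by factor through the $q$-factorials, while you use the factorisation $[n]_q=[d]_q[n/d]_{q^d}$ for the prefactor and the $q$-Lucas theorem for the multinomial; both give $f_{\alpha/d}(1)$. The one place where your route costs slightly more work is the vanishing case: the paper's $\gcd$-factorisation makes $f_\alpha(\omega_d)=0$ immediate whenever $d\nmid\gcd(\alpha)$ (a vanishing polynomial factor times a polynomial), whereas you need the extra subcase split on $d\mid|\alpha|$ and a second application of $q$-Lucas to the split-off factor $\qbinom{|\alpha|}{\alpha(a_0)}$. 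In exchange, your argument is self-contained modulo $q$-Lucas and avoids the two citations. Both are valid; no gaps.
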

\begin{remark}When $\psi:A\to\Z_{\geq 1}$, the Gauss conguence $(f_\alpha(1))$ on $A^\psi$ satisfies $c_\alpha=1$ when $|\alpha|=1$ and $c_\alpha=0$ otherwise.\end{remark}

For $A=\{0,1\}$, define $\phi:A^*\hookrightarrow\Z_{\geq 1}[\Z]$ by $\alpha\mapsto(\alpha(0)+\alpha(1),\alpha(0))$. Then the preimage of $(n,k)$ under $\phi$ contains the single element with $\alpha(0)=k$ and $\alpha(1)=n-k$ when $n\geq 1$ and $0\leq k\leq n$, and is empty otherwise. Pushing forward along $\phi$, we obtain that
\[f_{n,k}(q)=\qbinom{n}{k}\]
gives a $q$-Gauss congruence $(f_{n,k}(q))$ on $\Z_{\geq1}[\Z]$. Similarly, setting $\psi(0)=0$, $\psi(1)=1$ and pushing foward along $\phi:A^\psi\hookrightarrow\Z_{\geq 1}[\Z]$ with $\alpha\mapsto (\alpha(1),\alpha(0))$, we obtain
\[f_{n,k}(q)=\qbinom{n+k-1}{k}.\]
We now give a $q$-Gauss congruence associated to $(\lambda^n)$ for any $\lambda\in\Z$, which satisfies Gauss congruence with $c_1=\lambda$ and $c_n=0$ otherwise. One can check by computation that
\[\eps_n(q):=\begin{cases}
-1 & \text{if $n$ odd},\\
q^{n/2} & \text{if $n$ even}\end{cases}\]
gives a $q$-Gauss congruence $(\eps_n(q))$ on $\Z_{\geq 1}$ and that $\eps_n(1)=(-1)^n$.

\begin{definition}\label{def:exp}
Fix $\lambda\geq 1$. For $n\geq 1$, define
\[[\exp_\lambda(n)]_q:=\sum_{n_1+\cdots+n_\lambda=n}\qbinom{n}{n_1,\dots,n_\lambda}\]
and $[\exp_{-\lambda}(n)]_q:=\eps_n(q)[\exp_\lambda(n)]_q$. We also set $[\exp_0(n)]_q=0$.
\end{definition}
\begin{proposition}\label{prop:exp}
Fix $\lambda\in\Z$ and let $f_n(q):=[\exp_\lambda(n)]_q$. Then $f_n(1)=\lambda^n$ and $(f_n(q))$ satisfies $q$-Gauss congruence on $\Z_{\geq 1}$.
\end{proposition}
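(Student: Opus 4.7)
The plan is to deduce this proposition directly from three tools already in hand: Theorem~\ref{thm:fund}, the pushforward Lemma~\ref{lem:pushforward}, and Lemma~\ref{lem:multiply} used in conjunction with the sequence $(\eps_n(q))$. The identity $f_n(1) = \lambda^n$ is the shortest part: for $\lambda \geq 1$, setting $q = 1$ collapses each $q$-multinomial to a genuine multinomial and the standard multinomial theorem gives $\sum_{n_1 + \cdots + n_\lambda = n}\binom{n}{n_1,\ldots,n_\lambda} = \lambda^n$; the case $\lambda = 0$ is immediate, and for $\lambda \leq -1$ we have $f_n(1) = \eps_n(1)\,|\lambda|^n = (-1)^n|\lambda|^n = \lambda^n$.

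For $q$-Gauss congruence with $\lambda \geq 1$, I would apply Theorem~\ref{thm:fund} to the finite set $A = \{1,\ldots,\lambda\}$ equipped with the constant rank function $*: a \mapsto 1$. This yields that $(\qbinom{|\alpha|}{\alpha})_{\alpha \in A^*}$ satisfies $q$-Gauss congruence on $A^*$, because $\psi(\alpha) = |\alpha|$ here and so the prefactor $[\psi(\alpha)]_q/[|\alpha|]_q$ equals $1$. The rank map $\rk: A^* \to \Z_{\geq 1}$ is itself a morphism of ranked semigroups which is trivially rank-dividing and has finite fibres (the fibre over $n$ being the set of weak compositions of $n$ into $\lambda$ parts). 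Lemma~\ref{lem:pushforward} then gives
\[g_n(q) \;=\; \sum_{|\alpha| = n}\qbinom{n}{\alpha} \;=\; \sum_{n_1 + \cdots + n_\lambda = n}\qbinom{n}{n_1,\ldots,n_\lambda} \;=\; [\exp_\lambda(n)]_q,\]
so this sequence satisfies $q$-Gauss congruence on $\Z_{\geq 1}$.

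The case $\lambda = 0$ is immediate since the zero sequence vacuously satisfies $q$-Gauss congruence. For $\lambda \leq -1$, I would note that $\Z_{\geq 1}$ is torsion-free, that $(\eps_n(q))$ satisfies $q$-Gauss congruence by the remark immediately preceding Definition~\ref{def:exp}, and that $([\exp_{|\lambda|}(n)]_q)$ satisfies it by the previous paragraph; Lemma~\ref{lem:multiply} applied to their product then handles the negative case. I do not foresee a serious obstacle: the principal bookkeeping is verifying that the rank function $\rk: A^* \to \Z_{\geq 1}$ meets the hypotheses of the pushforward, which it does in a completely formal way, so the proof is essentially an assembly of earlier results rather than any new computation.
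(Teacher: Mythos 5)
Your proof is correct and follows essentially the same route as the paper: apply Theorem~\ref{thm:fund} to $A^*$ with $A=\{1,\dots,\lambda\}$ (where the prefactor trivialises), push forward along the rank morphism to get $[\exp_\lambda(n)]_q$, and handle negative $\lambda$ by multiplying by $(\eps_n(q))$ via Lemma~\ref{lem:multiply} and torsion-freeness of $\Z_{\geq 1}$. The only difference is that you spell out the pushforward hypotheses explicitly, which the paper leaves implicit.
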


\subsection{Characterisation}

Before stating the main theorem, we define a number-theoretic function known as \emph{Ramanujan's sum} (often denoted $c_d(j)$). Our use of these sums is inspired by \cite{csp}.

\begin{definition}\label{def:ramanujan}
For $d\geq 1$ and $j\in\Z$, let $\mu_j(d):=\sum_\omega \omega^j$ where $\omega$ ranges over all primitive $d^\text{th}$ roots of unity. Note that $\mu_1(d)=\mu(d)$ for every $d\geq 1$.
\end{definition}

\begin{theorem}\label{thm:abc-q}
Fix a ranked semigroup $S$ with finite decomposition and a Gauss congruence $(a_s)$. The following give $q$-Gauss congruences $(g_s(q))$ on $S$ and satisfy $g_s(1)=a_s$:
\begin{align*}
g_s(q)&=\frac{1}{\rk(s)}\sum_{j=0}^{\rk(s)-1}\left(\sum_{t\mid s}\mu_j(s/t)a_t\right)q^j,\\
g_s(q)&=\sum_{t\mid s}[\rk(t)]_{q^{s/t}}b_t,\text{ and}\\
g_s(q)&=\sum_{\substack{\alpha\in S^*\\\tau(\alpha)=s}}\frac{[\rk(s)]_q}{[|\alpha|]_q}\qbinom{|\alpha|}{\alpha}\prod_{t}[\exp_{c_t}(\alpha(t))]_q.\end{align*}
Moreover, a sequence of integer polynomials $(f_s(q))$ on $S$ satisfies $q$-Gauss congruence and $f_s(1)=a_s$ if and only if $f_s(q)\equiv g_s(q)\pmod{q^{\rk(s)}-1}$ for every $s\in S$, where $(g_s(q))$ is any of the above.
\end{theorem}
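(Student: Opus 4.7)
The overall strategy is to verify the hypotheses of Proposition \ref{prop:root-sub} for each candidate $g_s(q)$: that it is an integer polynomial of degree below $\rk(s)$ satisfying $g_s(\omega_d) = \sum_{t \in s/d} a_t$ for every $d \mid \rk(s)$. The $d = 1$ case (where $s/1 = \{s\}$) then yields $g_s(1) = a_s$ automatically, so only primitive $d$-th root evaluations for $d > 1$ require separate attention.

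The plan is to begin with the second formula, which is the most direct. Its integrality and the bound $\deg g_s < \rk(s)$ are immediate from the form of the summands (the maximum degree, attained at $t = s$, equals $\rk(s)-1$). To evaluate $g_s(\omega_d)$, note that $(\omega_d^{s/t})^{\rk(t)} = \omega_d^{\rk(s)} = 1$, so $[\rk(t)]_{\omega_d^{s/t}}$ vanishes whenever $\omega_d^{s/t} \neq 1$ and equals $\rk(t)$ when $\omega_d^{s/t} = 1$, i.e.\ when $d \mid s/t$. The surviving sum $\sum_{t \mid s,\,d \mid s/t} \rk(t) b_t$ can be reindexed via $u = dt$ and collapsed, using $a_{t'} = \sum_{u \mid t'} \rk(u) b_u$, into $\sum_{t' \in s/d} a_{t'}$. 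With the second formula established, the first follows by DFT inversion: its stated coefficients are exactly what one obtains by inverting the values $V(d) := \sum_{t \in s/d} a_t$ at each order-$d$ root of unity using $\sum_{\omega\text{ prim }d\text{-th}} \omega^{-j} = \mu_j(d)$. Since the second formula already realises these values by an integer polynomial of degree $< \rk(s)$, polynomial interpolation forces the first formula to coincide with the second, establishing its integrality.

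For the third formula I would construct $g_s(q)$ as a pushforward of Theorem \ref{thm:fund}. Take $A := \{(t,i) : t \in S,\,1 \leq i \leq |c_t|\}$ with $\psi(t,i) := \rk(t)$, so that Theorem \ref{thm:fund} yields a $q$-Gauss congruence $f_\beta(q) = \frac{[\psi(\beta)]_q}{[|\beta|]_q}\qbinom{|\beta|}{\beta}$ on $A^\psi$. The rank-preserving morphism $A^\psi \to S$ sending $\beta \mapsto \sum_{t,i} \beta(t,i) t$ activates Lemma \ref{lem:pushforward}. Organising the fibre over $s$ by first choosing $\alpha \in S^*$ with $\tau(\alpha) = s$ and then $\beta$ refining $\alpha$ via $\alpha(t) = \sum_i \beta(t,i)$, the $q$-multinomial identity $\qbinom{|\alpha|}{\alpha} \prod_t \qbinom{\alpha(t)}{\beta(t,\cdot)} = \qbinom{|\beta|}{\beta}$ together with $|\alpha| = |\beta|$ collapses the inner sum into $\prod_t [\exp_{|c_t|}(\alpha(t))]_q$, yielding the stated formula when every $c_t \geq 0$. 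For negative $c_t$, the sign factor $\eps_{\alpha(t)}(q)$ is incorporated using Lemma \ref{lem:multiply} (applicable because $S^*$ is torsion-free) together with Proposition \ref{prop:exp}.

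The concluding equivalence follows from Proposition \ref{prop:root-sub} and the fact that a polynomial of degree below $\rk(s)$ is determined by its values at the $\rk(s)$-th roots of unity: two integer polynomials $f_s(q), g_s(q)$ satisfying $q$-Gauss congruence with the same value at $q = 1$ agree at every such root of unity, whence $q^{\rk(s)}-1 \mid f_s - g_s$; conversely, polynomials congruent modulo $q^{\rk(s)}-1$ share all those evaluations. The main obstacle I anticipate is the reindexing step in the second formula when $S$ is not cancellative or torsion-free, where correct tracking of multiplicities in $u/d$ and $s/d$ is delicate. A secondary subtlety is the negative-$c_t$ case of the third formula, which requires the alternating factor $\eps_n(q)$ to be propagated through the pushforward via Lemma \ref{lem:multiply}.
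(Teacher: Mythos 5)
Your proposal is correct, and for the second and third formulae and the concluding modular equivalence it follows essentially the paper's own route: the second formula is exactly the construction embedded in the paper's proof of Proposition \ref{prop:root-sub} (vanishing of $[\rk(t)]_{q^{s/t}}$ at $\omega_d$ unless $d\mid s/t$, then reindexing over $u\in s/d$ — and your worry about multiplicities is harmless, since for fixed $t$ the element $u\in s/d$ with $t\mid u$ is forced by $\rk(u)=\rk(s)/d$), and the third formula is obtained by the same pushforward of Theorem \ref{thm:fund} along $\beta\mapsto\sum\beta(t,i)\,t$, the same $q$-multinomial factorisation producing $\prod_t[\exp_{|c_t|}(\alpha(t))]_q$, and the same $\eps_{\alpha(t)}(q)$ correction via Lemma \ref{lem:multiply} for negative $c_t$. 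The one genuine divergence is the first formula: the paper proves it directly, establishing integrality of the coefficients by applying Proposition \ref{prop:gauss-other} to each Ramanujan sum $\mu_j$ and then computing $g_s(\omega_d)$ via the orthogonality relation $\sum_{j=0}^{\rk(s)-1}\omega_d^j\mu_j(s/t)=\rk(s)\cdot[d=s/t]$, whereas you obtain it as the discrete Fourier/interpolation inverse of the prescribed root-of-unity values and then identify it with the already-established second formula by uniqueness of the degree-$(<\rk(s))$ interpolant. Your route gets integrality of the first formula for free from the second and avoids invoking Proposition \ref{prop:gauss-other} at all; the paper's route makes the first formula self-contained (independent of the existence of the integer sequence $(b_s)$) and, as a by-product, exhibits the orthogonality identity explicitly. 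Both are valid, and your DFT bookkeeping ($\sum_{d\mid n}\mu_j(d)\sum_{t\in s/d}a_t=\sum_{t\mid s}\mu_j(s/t)a_t$, using closure of primitive roots under inversion) checks out.
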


\begin{remark}
The second formula in this Theorem resolves \cite[Problem 40]{lyndon} by connecting $(f_s(q))$ to $(b_s)$.
\end{remark}

\section{Combinatorial analogue for Gauss congruence}\label{sect:lyndon}

Fix a ranked semigroup $S$ and an integer sequence $(a_s)$ satisfying Gauss congruence. This congruence can be described by the parameter sequences $(b_s)$ or $(c_s)$ from Theorem \ref{thm:abc}, and up to a modular relation we can determine by Theorem \ref{thm:abc-q} a unique $q$-Gauss congruence $(f_s(q))$ which evaluates to $(a_s)$ at $q=1$.

A combinatorial analogue of this congruence consists of a sequence of sets $(X_s)$ enumerated by $(a_s)$, with additional combinatorial structure which gives meaning to the associated sequences $(b_s)$ and $(f_s(q))$. We call these \emph{Lyndon structures} as they exhibit `Lyndon-like cyclic sieving' as defined in \cite{lyndon} when $S=\Z_{\geq 1}$.

\begin{definition}\label{def:lyndon}
A \emph{Lyndon structure} $(X_s)$ is a sequence of finite sets where each $X_s$ is equipped with an action of the cyclic group $\Cyc_{\rk(s)}$ satisfying the following: for $d\mid\rk(s)$, the number of elements in $X_s$ fixed by $\Cyc_d\leq \Cyc_{\rk(s)}$ is
\[\#X_s^{\Cyc_d}=\sum_{t\in s/d}\#X_t.\]
\end{definition}

We call two Lyndon structures $(X_s)$ and $(Y_s)$ \emph{isomorphic} if there are bijections $X_s\xrightarrow{\sim}Y_s$ which commute with the actions of the cyclic groups.

\begin{theorem}\label{thm:lyndon}
For a Lyndon structure $(X_s)$, the sequence $(a_s)$ defined by $a_s:=\#X_s$ satisfies Gauss congruence. Moreover, $(b_s)$ counts orbits of order $\rk(s)$ in $X_s$, and $(X_s,\Cyc_{\rk(s)},f_s(q))$ satisfies the cyclic sieving phenomenon for every $s\in S$.
\end{theorem}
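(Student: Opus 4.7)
The plan is to reduce everything to a single Möbius-style calculation, exploiting Proposition \ref{prop:root-sub} to align the cyclic sieving criterion with the defining property of a Lyndon structure. For each $s\in S$ and $e\mid\rk(s)$, let $N_e(s)$ count the elements of $X_s$ whose stabilizer in $\Cyc_{\rk(s)}$ is exactly $\Cyc_e$. Then $\#X_s^{\Cyc_d}=\sum_{e:\,d\mid e\mid\rk(s)}N_e(s)$, and free elements (counted by $N_1(s)$) form orbits of the maximal size $\rk(s)$, so $N_1(s)=\rk(s)\cdot O(s)$ where $O(s)$ is the number of orbits of maximal order in $X_s$.

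For the Gauss congruence, I reindex $\sum_{t\mid s}\mu(s/t)a_t$ by the integer $d:=s/t$. Since $t\mid s$ with $s=d\cdot t$ corresponds to $t\in s/d$, this sum becomes $\sum_{d\mid\rk(s)}\mu(d)\sum_{t\in s/d}\#X_t$, and the inner sum equals $\#X_s^{\Cyc_d}$ by the Lyndon condition. Substituting the stabilizer decomposition and swapping the order of summation, the standard identity $\sum_{d\mid e}\mu(d)=[e=1]$ collapses the whole double sum to $N_1(s)=\rk(s)\cdot O(s)$. This is divisible by $\rk(s)$, and dividing through and comparing with the Möbius-inversion description of $b_s$ implicit in Theorem \ref{thm:abc} yields $b_s=O(s)$, confirming both Gauss congruence and the stated interpretation of $(b_s)$.

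For the cyclic sieving, Proposition \ref{prop:root-sub} says that a polynomial sequence $(f_s(q))$ with $f_s(1)=a_s$ satisfies $q$-Gauss congruence if and only if $f_s(\omega_d)=\sum_{t\in s/d}a_t$ for every $d\mid\rk(s)$. By the Lyndon condition the right-hand side equals $\#X_s^{\Cyc_d}$, which is exactly the CSP identity for $(X_s,\Cyc_{\rk(s)},f_s(q))$. Thus the $q$-Gauss congruence supplied by Theorem \ref{thm:abc-q} automatically yields CSP, finishing the proof.

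There is no real conceptual obstacle here; the main care needed is correct bookkeeping when reorganising $\sum_{t\mid s}$ as $\sum_{d\mid\rk(s)}\sum_{t\in s/d}$, which is valid in any ranked semigroup with finite decomposition regardless of torsion, since each $t\mid s$ has a unique $d=s/t\in\Z_{\geq 1}$ and the fibre over $d$ is precisely $s/d$. Once this reindexing is done, every subsequent step is a direct application of Proposition \ref{prop:root-sub}, the Lyndon identity, and classical number-theoretic Möbius inversion on the divisor lattice of $\rk(s)$.
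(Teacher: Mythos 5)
Your proof is correct and follows essentially the same route as the paper: the stabilizer/M\"obius computation you carry out is just an explicit verification of the orbit-counting identity $a_s=\sum_{t\mid s}\rk(t)b_t$ that the paper asserts directly from the Lyndon condition, and the cyclic sieving part is identical, reducing to Proposition \ref{prop:root-sub}. No gaps.
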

\begin{proof}
If $b_s$ counts orbits of order $\rk(s)$ in each $X_s$, then
\[a_s=\#X_s=\sum_{t\mid s}\rk(t)b_t\]
which proves that $(a_s)$ is the Gauss congruence associated to $(b_s)$. For $f_s(q)$ to satisfy cyclic sieving with $X_s$, we require that $f_s(\omega_d)=\sum_{t\in s/d}\#X_t=\sum_{t\in s/d}f_t(1)$ for every $d\mid\rk(s)$, which is equivalent to $q$-Gauss congruence by Lemma \ref{prop:root-sub}.
\end{proof}

\begin{remark}For a Gauss congruence $(a_s)$ to have a Lyndon structure $(X_s)$ which it enumerates, we require $b_s\geq 0$ for every $s\in S$. Theorem \ref{thm:festoon-Y} shows this is sufficient.\end{remark}

\begin{example}
Fix an ordered set $A$. For $\alpha\in A^*$, let $X_\alpha$ be the set of words with \emph{content} $\alpha$ (that is, $\alpha(a)$ occurrences of each $a\in A$) with $\Cyc_{|\alpha|}$ acting by rotation of letters. Then $(X_\alpha)$ is a Lyndon structure on $A^*$. The multinomial coefficient enumerates $X_\alpha$, and by Theorem \ref{thm:fund} its $q$-analogue satisfies $q$-Gauss congruence. Hence,
\[\left(X_\alpha,\Cyc_{|\alpha|},\qbinom{|\alpha|}{\alpha}\right)\]
satisfies the CSP for every $\alpha$. Orbits in $X_\alpha$ of maximal rank are in bijection with \emph{Lyndon words}: those which are uniquely lexicographically minimal in their multiset of rotations, and hence $b_\alpha$ counts Lyndon words of content $\alpha$.
\end{example}
\begin{remark}
The $q$-multinomial coefficient has a beautiful product-sum identity due to MacMachon (see \cite[Theorem 3.7]{andrews}). For a word $w=w_1\cdots w_{|\alpha|}\in X_\alpha$, define the \emph{major index} $\maj(w):=\sum\{i\mid w_i>w_{i+1}\}$ using the ordering on $A$. Then
\[\qbinom{|\alpha|}{\alpha}=X_\alpha^\maj(q):=\sum_{w\in X_\alpha}q^{\maj(w)}.\]
\end{remark}

A \emph{composition} is a word $w=w_1\cdots w_n$ in the ordered alphabet $\Z$ which has a \emph{length} $n$ and a \emph{sum} $k:=w_1+\cdots+w_n$. To obtain a $q$-Gauss congruence for compositions, we pushfoward $(X_\alpha^\maj(q))$ from above along the morphism 
\[\phi:\Z^*\to\Z_{\geq 1}[\Z];\qquad\alpha\mapsto \left(\sum_{n\in\Z}\alpha(n),\sum_{n\in\Z}n\alpha(n)\right).\]
The set $\bigcup_{\alpha\in\phi^{-1}(n,k)}X_\alpha$ consists of all compositions with length $n$ and sum $k$. To ensure finite preimages as required by Lemma \ref{lem:pushforward}, we restrict to subsets of $\Z$ bounded on one side.

\begin{example}
Fix a subset $A\subset\Z$ bounded from below. Let $X_{n,k}$ denote the set of compositions of length $n$ and sum $k$ from the alphabet $A$, with $\Cyc_n$ acting by rotation. Then $(X_{n,k})$ is a Lyndon structure on $\Z_{\geq 1}[\Z]$, and $(X_{n,k},\Cyc_n,X_{n,k}^\maj(q))$ satisfies the CSP for every $n\geq 1$ and $k\in\Z$.
\end{example}

We provide a number of concrete examples. In each case, we obtain an alternate $q$-Gauss congruence by finding a natural $q$-analogue for the enumeration of the compositions. We write down the implied modular equivalence with the major index polynomial.

When $A=\{0,1,2,\dots\}$, we have
\[X_{n,k}^\maj(q)\equiv\qbinom{n+k-1}{k}\pmod{q^n-1}.\]

When $A=\{1,2,3\dots\}$, we have
\[X_{n,k}^\maj(q)\equiv\qbinom{k-1}{k-n}\pmod{q^n-1}\]
where the right-hand side is obtained by pulling back the $q$-Gauss congruence from the previous example along $(n,k)\mapsto (n,k-n)$.

When $A=\{-1,0,1\}$, $(X_{n,k})$ is enumerated by the \emph{trinomial coefficients}. Letting $i$ be the number of occurences of $-1$ in the composition, we obtain
\[X_{n,k}^\maj(q)\equiv\sum_{i=0}^{\lfloor (n-k)/2\rfloor}\qbinom{n}{i}\qbinom{n-i}{k+i}\pmod{q^n-1}.\]
To see this is a $q$-Gauss congruence, we take the $q$-Gauss congruence
\[f_{n,k,i}=\qbinom{n}{k}\qbinom{k}{i}\]
on $\Z_{\geq 1}[\Z][\Z]$ obtained by chaining, and pullback along $(n,k,i)\mapsto(n,n-i,k+i)$ to obtain the $q$-Gauss congruence
\[f_{n,k,i}=\qbinom{n}{i}\qbinom{n-i}{k+i}\]
on $\Z_{\geq 1}[\Z][\Z]$. Then pushfoward along $(n,k)\mapsto (n,k,i)$ to sum over $i$. 
\begin{remark}
A third possible $q$-Gauss congruence was given in \cite[Theorem 1.2]{q-congruences}:
\[f_{n,k}(q)=\sum_{i=0}^{\lfloor (n-k)/2\rfloor}q^{i(i+b)}\qbinom{n}{i}\qbinom{n-i}{k+i}\]
for any $b\geq -1$.
\end{remark}

\section{Festoons}\label{sect:festoons}

We give an explicit construction of a Lyndon structure which enumerates a given Gauss congruence. These are based on a combinatorial object we call a \emph{festoon}, named for a type of decorative necklace. To create a festoon, decompose the cycle graph $\Gamma_n$ into intervals, and label each interval by a \emph{bead} of the corresponding length. The cyclic group $\Cyc_n$ acts on these festoons by clockwise rotation.

\begin{example}
Suppose we have the following beads: two colours of length $1$, one colour of length $2$ and one colour of length $5$. Below are some examples of festoons with length $5$.
\[\begin{tikzpicture}[scale = 0.7]
\draw [fill = ylw] \tube{-3}{0}{1}{0.3}{0}{0};
\draw [fill = ppl] \tube{-3}{0}{1}{0.3}{72}{72};
\draw [fill = pnk!20] \tube{-3}{0}{1}{0.3}{2*72}{3*72};
\draw [fill = ylw] \tube{-3}{0}{1}{0.3}{4*72}{4*72};

\draw [fill = ppl] \tube{0}{0}{1}{0.3}{0}{0};
\draw [fill = pnk!20] \tube{0}{0}{1}{0.3}{72}{2*72};
\draw [fill = pnk!20] \tube{0}{0}{1}{0.3}{3*72}{4*72};

\draw [fill = ylw] \tube{3}{0}{1}{0.3}{0}{0};
\draw [fill = ylw] \tube{3}{0}{1}{0.3}{72}{72};
\draw [fill = ppl] \tube{3}{0}{1}{0.3}{2*72}{2*72};
\draw [fill = ylw] \tube{3}{0}{1}{0.3}{3*72}{3*72};
\draw [fill = ppl] \tube{3}{0}{1}{0.3}{4*72}{4*72};

\draw [fill = blu!20] \tube{6}{0}{1}{0.3}{1*72}{5*72};

\draw [fill = blu!20] \tube{9}{0}{1}{0.3}{3*72}{7*72};

\foreach \x in {-1,0,1,2,3} {
	\draw (3*\x,0) circle (1);
}
\foreach \x in {-1,0,1,2,3} {
	\foreach \ang in {0,1,2,3,4} {
		\draw [fill = black] ($(3*\x,0)+(72*\ang:1)$) circle (0.1);
	}
}
\end{tikzpicture}\]
\end{example}

\begin{remark}
A model similar to festoons has appeared in the biophysical chemistry literature under the name `multivalent bindings on the ring lattice' (for example, see \cite{binding}).
\end{remark}

\subsection{Main results}

First we consider festoons of a given content, which provides a combinatorial analogue for the $q$-Gauss congruence from Theorem \ref{thm:fund} at $q=1$, and recovers Theorem \ref{thm:festoon-intro}.

\begin{theorem}\label{thm:festoon-A}
Fix a set of beads $A$ and a length function $\psi:A\to\Z_{\geq 1}$. For $\alpha\in A^\psi$, let $X_\alpha$ be the set of festoons with content $\alpha$ (that is, with $\alpha(a)$ copies of each bead $a$). Then $(X_\alpha)$ is a Lyndon structure on $A^\psi$, and
\[\left(X_\alpha,\Cyc_{\psi(\alpha)},\frac{[\psi(\alpha)]_q}{[|\alpha|]_q}\qbinom{|\alpha|}{\alpha}\right)\]
satisfies the CSP for every $\alpha\in A^\psi$.
\end{theorem}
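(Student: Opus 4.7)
The plan is to apply Theorem \ref{thm:lyndon} combined with Theorem \ref{thm:fund}, which supplies the $q$-Gauss congruence $f_\alpha(q)=\frac{[\psi(\alpha)]_q}{[|\alpha|]_q}\qbinom{|\alpha|}{\alpha}$. The work then splits into two independent verifications: that $f_\alpha(1)=\#X_\alpha$, and that $(X_\alpha)$ satisfies the Lyndon fixed-point identity of Definition \ref{def:lyndon}.

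For the enumeration, I would double-count pointed festoons $(F,v)$ with $v$ a bead-start vertex of $F$. Each festoon has exactly $|\alpha|$ bead-starts, so there are $\#X_\alpha\cdot|\alpha|$ such pairs. Alternatively, fixing $v$ among the $\psi(\alpha)$ vertices of $\Gamma_{\psi(\alpha)}$ and unrolling the cycle from $v$ yields a linear word in the beads, counted by $\binom{|\alpha|}{\alpha}$. Equating gives $\#X_\alpha=\frac{\psi(\alpha)}{|\alpha|}\binom{|\alpha|}{\alpha}=f_\alpha(1)$.

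For the Lyndon identity, fix $d\mid\psi(\alpha)$ and write $n=\psi(\alpha)$, so that the generator of $\Cyc_d\leq\Cyc_n$ acts as rotation by $n/d$. I would exhibit a bijection between $X_\alpha^{\Cyc_d}$ and $X_{\alpha/d}$ when $d\mid\alpha$ in $A^\psi$, with $X_\alpha^{\Cyc_d}$ empty otherwise. The forward map descends a fixed festoon to the quotient cycle $\Gamma_n/\Cyc_d\cong\Gamma_{n/d}$, using that $\Cyc_d$-invariance makes the bead structure well-defined modulo $n/d$; the inverse $d$-fold replicates a festoon on $\Gamma_{n/d}$ around $\Gamma_n$. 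Since $A^\psi$ is torsion-free, the right-hand side $\sum_{\beta\in\alpha/d}\#X_\beta$ has at most one nonzero term, matching this count.

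The crux is showing that a fixed festoon forces $d\mid\alpha$: inside $\langle n/d\rangle\leq\Z/n$ the stabiliser of any vertex is trivial, so the $\Cyc_d$-orbits of bead-start vertices all have size exactly $d$, and because the action preserves labels the beads in a common orbit share colour and length, forcing $d\mid\alpha(a)$ for every $a\in A$. This orbit-size argument is the main obstacle---care is needed because a bead carries a designated starting position and not merely a vertex set, especially in the degenerate case $|\alpha|=1$ where a single bead covers all of $\Gamma_n$, though the stabiliser computation handles it uniformly since $n/d$ has order $d$ in $\Z/n$. Once these pieces are in place, Theorem \ref{thm:lyndon} delivers the cyclic sieving phenomenon.
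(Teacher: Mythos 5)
Your proposal is correct and follows essentially the same route as the paper: the $q$-Gauss congruence comes from Theorem \ref{thm:fund}, the enumeration $\#X_\alpha=\frac{\psi(\alpha)}{|\alpha|}\binom{|\alpha|}{\alpha}$ is the same rotation/overcounting argument (recast as a cleaner double count of pointed festoons), and the Lyndon property is the paper's observation that $\Cyc_d$-fixed festoons are exactly $d$-fold repetitions, with the CSP then delivered by Theorem \ref{thm:lyndon}. Your orbit-size argument merely fills in the detail the paper leaves to a one-line assertion.
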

\begin{proof}
A festoon in $X_\alpha$ is fixed by $C_d$ if and only if it is the $d$-fold repetition of some festoon in $X_{\alpha/d}$, so $(X_\alpha)$ is a Lyndon structure. Let $f_\alpha(q)$ be the polynomials in the sieving triples, which satisfy $q$-Gauss congruence by Theorem \ref{thm:fund}. We prove $\#X_\alpha=f_\alpha(1)$. The multinomial coefficient counts possible linear arrangements of the beads with content $\alpha$. There are $\psi(\alpha)$ rotations for this arrangement around $\Gamma_{\psi(\alpha)}$, but each is counted $|\alpha|$ times, one for each cyclic rotation of the linear arrangement of these beads.
\end{proof}

We connect festoons to ranked semigroups by assigning a \emph{type} $s\in S$ to each bead. A bead with type $s$ will always have length $\rk(s)$. Given a nonnegative integer sequence $(c_s)$, a \emph{festoon coloured by} $(c_s)$ is a festoon where each bead of type $s$ is coloured in one of $c_s$ ways. If this festoon has beads of types $s_1,s_2,\dots$, we say that the type of the festoon is $s=s_1+s_2+\cdots$.

\begin{theorem}\label{thm:festoon-X}
Let $S$ be a ranked semigroup with finite decomposition, and $(c_s)$ a nonnegative integer sequence with corresponding Gauss congruence $(a_s)$. Let $X_s$ be the set of festoons of type $s$ coloured by $(c_s)$. Then $(X_s)$ is a Lyndon structure on $S$ enumerated by $(a_s)$.
\end{theorem}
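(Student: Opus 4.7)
The plan is to separately verify the Lyndon structure property and the enumeration identity $\#X_s = a_s$, both by extending the argument used for Theorem~\ref{thm:festoon-A}.

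First, for the Lyndon property, I would argue that $X_s^{\Cyc_d} \cong \bigsqcup_{t \in s/d} X_t$ via $d$-fold repetition, exactly as in the proof of Theorem~\ref{thm:festoon-A}. A coloured festoon in $X_s$ is fixed by $\Cyc_d \leq \Cyc_{\rk(s)}$ (acting by rotation by $\rk(s)/d$) precisely when both its bead decomposition and its colouring are invariant under this rotation, which forces it to be the $d$-fold repetition of a coloured festoon on $\Gamma_{\rk(s)/d}$ of some type $t$ with $d\cdot t = s$. The map $X_t \to X_s^{\Cyc_d}$ sending a festoon to its $d$-fold repetition is injective, and the images for distinct $t \in s/d$ are disjoint because the repetition multiplies the multiplicity of each bead type by $d$, so the underlying type $t$ is recoverable.

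Second, for the enumeration identity, I would construct a bijection matching the combinatorial meaning of formula~(\ref{eq:ac1S}). Given $x \in X_s$, let $s_1$ denote the type of the bead containing vertex $1$ of $\Gamma_{\rk(s)}$, and let $j \in \{1, \ldots, \rk(s_1)\}$ record the clockwise position of vertex $1$ within that bead. Reading bead types clockwise from the first bead yields an ordered decomposition $s = s_1 + s_2 + \cdots + s_k$, with each bead carrying one of $c_{s_i}$ colours $\gamma_i$. The tuple $(j, s_1, \ldots, s_k, \gamma_1, \ldots, \gamma_k)$ determines $x$ uniquely and can be freely chosen subject to the decomposition constraint, so
\[\#X_s = \sum_{s_1 + \cdots + s_k = s} \rk(s_1)\, c_{s_1} c_{s_2} \cdots c_{s_k} = a_s\]
by~(\ref{eq:ac1S}).

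I do not anticipate any serious obstacles. The one subtlety worth flagging is the asymmetric role of $s_1$ in formula~(\ref{eq:ac1S}): the factor $\rk(s_1)$ counts the positions of vertex $1$ within its containing bead, and summing over ordered (rather than unordered) decompositions captures the clockwise reading of beads starting from that position. With this combinatorial dictionary in hand, the proof is immediate.
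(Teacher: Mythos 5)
Your proof is correct and takes essentially the same approach as the paper: the Lyndon property is established via $d$-fold repetition, and the enumeration via the ordered decomposition of the festoon into beads, with the factor $\rk(s_1)$ accounting for the position of the distinguished vertex within the first bead. Your write-up is more detailed than the paper's three-sentence proof (in particular, you explicitly verify injectivity and the disjointness of images over $t\in s/d$, which the paper leaves implicit), but the underlying argument is identical.
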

\begin{proof}
A festoon of type $s$ is fixed by $C_d$ if and only if it is the $d$-fold repetition of a festoon in $X_t$ with $s=d\cdot t$, so $(X_s)$ is a Lyndon structure. They satisfy the appropriate count by (\ref{eq:ac1S}): choose parts $s_1,s_2,\dots$ with $s_1+s_2+\cdots=s$, then colour part $i$ in one of $c_{s_i}$ ways. This linear arrangement has $\rk(s_1)$ rotations where the first bead covers a distinguished vertex of $\Gamma_{\rk(s)}$.
\end{proof}

\begin{theorem}\label{thm:festoon-Y}
Let $S$ be a ranked semigroup, and $(b_s)$ a nonnegative integer sequence with corresponding Gauss congruence $(a_s)$. Let $Y_s$ be the set of festoons of type $s$ coloured by $(b_s)$, where all beads in the festoon have the same type and colour. Then $(X_s)$ is a Lyndon structure on $S$ enumerated by $(a_s)$.
\end{theorem}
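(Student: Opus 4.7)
The plan is to verify the two claims separately: that $\#Y_s = a_s$ for each $s \in S$, and that $(Y_s)$ is a Lyndon structure. Both parts follow the same pattern as the proofs of Theorems \ref{thm:festoon-A} and \ref{thm:festoon-X}, but partitioning by the common bead type replaces the combinatorics of arbitrary arrangements.

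For the enumeration, I would partition $Y_s$ according to the common type $u$ of all beads, which must satisfy $u \mid s$ so that the $s/u$ beads of length $\rk(u)$ tile the cycle $\Gamma_{\rk(s)}$. For each such $u$, a festoon with all beads of type $u$ and a fixed colour is determined by its tile decomposition, and there are exactly $\rk(u)$ such decompositions, indexed by the position of a distinguished tile boundary modulo $\rk(u)$. Multiplying by the $b_u$ colour choices and summing over all $u \mid s$ yields $\#Y_s = \sum_{u \mid s} \rk(u)\, b_u = a_s$ by equation (\ref{eq:abS}).

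For the Lyndon property, I would exhibit a bijection between $Y_s^{\Cyc_d}$ and the disjoint union $\sqcup_{t \in s/d} Y_t$ via $d$-fold repetition. In one direction, given $t \in s/d$ and $F \in Y_t$, laying $d$ copies of $F$ end-to-end around $\Gamma_{\rk(s)}$ produces a uniform-type, uniform-colour festoon of type $d \cdot t = s$ that is visibly fixed by rotation by $\rk(s)/d = \rk(t)$. In the other direction, given $F \in Y_s^{\Cyc_d}$ with common bead type $u$ and colour $c$, the tile decomposition is invariant under rotation by $\rk(s)/d$, which forces $\rk(u) \mid \rk(s)/d$. Hence $F$ is the $d$-fold repetition of a uniform sub-festoon on an arc of length $\rk(s)/d$ containing $(s/u)/d$ beads of type $u$ and colour $c$; this sub-festoon has type $t := ((s/u)/d) \cdot u$, and $d \cdot t = (s/u) \cdot u = s$ shows $t \in s/d$ as required.

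The main obstacle is the torsion handling in $S$: when $s/d$ contains multiple elements, one must verify that the inverse of the $d$-fold repetition map lands in the correct component of the disjoint union rather than merely in some $Y_{t'}$ with $t' \mid s$. The explicit reconstruction $t = ((s/u)/d) \cdot u$ pins down this component directly from the common bead type $u$, sidestepping any appeal to cancellation in $S$. The remainder is a routine translation between the cyclic structure of $\Gamma_{\rk(s)}$ and the divisibility structure of $S$.
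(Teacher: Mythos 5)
Your proof is correct and follows essentially the same route as the paper: the enumeration $\#Y_s=\sum_{u\mid s}\rk(u)b_u=a_s$ by choosing a common bead type, colour, and one of $\rk(u)$ rotations, and the fixed-point count via the $d$-fold repetition bijection $Y_s^{\Cyc_d}\cong\bigsqcup_{t\in s/d}Y_t$. Your explicit reconstruction $t=((s/u)/d)\cdot u$ is a welcome elaboration of the paper's terse ``analogous argument'' and correctly handles the non-torsion-free case.
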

\begin{proof}
By an analogous argument as above $(X_s)$ is a Lyndon structure. To construct a festoon in $X_s$, choose a bead of type $t$ with $t\mid s$, colour it in $b_t$ ways, repeat it $s/t$ times and rotate the festoon in one of $\rk(t)$ distinct ways. We have $\#Y_s=a_s$ by the relation between $(a_s)$ and $(b_s)$ in Theorem \ref{thm:abc}.
\end{proof}

\subsection{Examples}

We give a number of examples of Theorem \ref{thm:festoon-X} for $S=\Z_{\geq 1}$.

\begin{example}\label{ex:content}
Fix $\lambda\geq 1$. Set $c_1=\lambda$ and $c_n=0$ otherwise. Then $(X_n)$ is isomorphic to words of length $n$ on the alphabet $\{1,\dots,\lambda\}$ under rotation of letters. The associated Gauss congruence is $(\lambda^n)$, so $(X_n,\Cyc_n,[\exp_\lambda(n)]_q)$ satisfies the CSP for every $n\geq 1$.
\end{example}

\begin{example}
Set $c_1=c_2=1$ and $c_n=0$ otherwise. Then the values
\[a_n=\tr\begin{bmatrix}1 & 1 \\ 1 & 0\end{bmatrix}^n\]
are the Lucas numbers, satisfying $a_n=a_{n-1}+a_{n-2}$ for $n>2$. By Theorem \ref{thm:abc-q} an associated $q$-Gauss congruence is
\[f_n(q)=\sum_{k=0}^{\lfloor n/2\rfloor}\frac{[n]_q}{[n-k]_q}\qbinom{n-k}{k}.\]
If we label the beads as
\[\begin{tikzpicture}[scale = 0.8]
\draw (0,0) circle (0.3);
\node at (0,0) {$0$};
\draw ($(2,0)+(-0.3,-0.3)$) arc (270:90:0.3) -- ($(2,0)+(0.3,0.3)$) arc (90:-90:0.3) -- cycle;
\node at (2,0) {$0\,\,\,\,1$};
\end{tikzpicture}\]
we see that $(X_n)$ is isomorphic to the Lyndon structure of cyclic binary words of length $n$ where the 1s are never adjacent. For example:
\[\begin{tikzpicture}[scale = 0.9]
\draw \tube{-2}{0}{1}{0.2}{0}{0};
\draw \tube{-2}{0}{1}{0.2}{30}{60};
\draw \tube{-2}{0}{1}{0.2}{90}{120};
\draw \tube{-2}{0}{1}{0.2}{150}{150};
\draw \tube{-2}{0}{1}{0.2}{180}{210};
\draw \tube{-2}{0}{1}{0.2}{240}{240};
\draw \tube{-2}{0}{1}{0.2}{270}{300};
\draw \tube{-2}{0}{1}{0.2}{330}{330};

\node at ($(2,0)+(0:1)$) {0};
\node at ($(2,0)+(30:1)$) {1};
\node at ($(2,0)+(60:1)$) {0};
\node at ($(2,0)+(90:1)$) {1};
\node at ($(2,0)+(120:1)$) {0};
\node at ($(2,0)+(150:1)$) {0};
\node at ($(2,0)+(180:1)$) {1};
\node at ($(2,0)+(210:1)$) {0};
\node at ($(2,0)+(240:1)$) {0};
\node at ($(2,0)+(270:1)$) {1};
\node at ($(2,0)+(300:1)$) {0};
\node at ($(2,0)+(330:1)$) {0};

\node at (0,0) {$\mapsto$};

\draw (-2,0) circle (1);
\foreach \ang in {0,1,2,3,4,5,6,7,8,9,10,11} {
	\draw [fill = black] ($(-2,0)+(30*\ang:1)$) circle (0.075);
}

\end{tikzpicture}\]
The triple $(X_n,\Cyc_n,f_n(q))$ then satisfies the CSP for every $n\geq 1$.
\end{example}
\begin{remark}
The above example is from \cite{q-congruences}, where the alternate $q$-Gauss congruence
\[g_n(q)=\tr\left(\begin{bmatrix}1 & 1 \\ 1 & 0\end{bmatrix}\begin{bmatrix}1 & q \\ 1 & 0\end{bmatrix}\cdots \begin{bmatrix}1 & q^{n-1} \\ 1 & 0\end{bmatrix}\right)\]
is used. As usual, this implies $f_n(q)\equiv g_n(q)\pmod{q^n-1}$ for every $n\geq 1$.
\end{remark}

\begin{example}
Set $c_2=3$, $c_3=2$ and $c_n=0$ otherwise. Then $a_n=\tr(M^n)$, where
\[M=\begin{bmatrix}0 & 3 & 2 \\ 1 & 0 & 0 \\ 0 & 1 & 0\end{bmatrix}\sim\begin{bmatrix}2 & 0 & 0 \\ 0 & -1 & 1 \\ 0 & 0 & -1\end{bmatrix}.\]
So $a_n=2^n+2(-1)^n$ and $f_n(q)=[\exp_2(n)]_q+2\eps_n(q)$ gives an associated $q$-Gauss congruence. We claim that $(X_n)$ is isomorphic to the Lyndon structure of cyclic words of length $n$ on the alphabet $\{0,1,2\}$ with no adjacent pairs. To see this, label the beads as
\[\begin{tikzpicture}[scale = 0.8]
\draw ($(0,0)+(-0.3,-0.3)$) arc (270:90:0.3) -- ($(0,0)+(0.3,0.3)$) arc (90:-90:0.3) -- cycle;
\node at (0,0) {$0\,\,\,\,1$};

\draw ($(2,0)+(-0.3,-0.3)$) arc (270:90:0.3) -- ($(2,0)+(0.3,0.3)$) arc (90:-90:0.3) -- cycle;
\node at (2,0) {$0\,\,\,\,2$};

\draw ($(4,0)+(-0.3,-0.3)$) arc (270:90:0.3) -- ($(4,0)+(0.3,0.3)$) arc (90:-90:0.3) -- cycle;
\node at (4,0) {$\text{x}\,\,\,\,\text{x}$};

\draw ($(6.2,0)+(-0.55,-0.3)$) arc (270:90:0.3) -- ($(6.2,0)+(0.55,0.3)$) arc (90:-90:0.3) -- cycle;
\node at (6.2,0) {$0\,\,\,\,1\,\,\,\,2$};

\draw ($(8.6,0)+(-0.55,-0.3)$) arc (270:90:0.3) -- ($(8.6,0)+(0.55,0.3)$) arc (90:-90:0.3) -- cycle;
\node at (8.6,0) {$0\,\,\,\,2\,\,\,\,1$};
\end{tikzpicture}\]
where the bead middle bead is `12' by default, but becomes `21' if it follows a bead ending in 1. Again, $(X_n,\Cyc_n,f_n(q))$ satisfies the CSP for every $n\geq 1$.
\end{example}

\subsection{Counting by number of beads}

We refine the enumeration of festoons coloured by $(c_s)$ by keeping track of the number of beads. Given a ranked semigroup $S$ and an integer sequence $(c_s)$, define the ranked semigroup $\hat S:=S[\Z_{\geq 1}]$ and the integer sequence $(\hat c_{s,k})$ on $\hat S$ by
\[\hat c_{s,k}=\begin{cases} c_s & \text{if }k = 1,\\
0 & \text{otherwise}.\end{cases}\]
If $(X_{s,k})$ is the Lyndon structure of festoons coloured by $(\hat c_{s,k})$, then $X_{s,k}$ is the set of festoons of type $s$ coloured by $(c_s)$ with exactly $k$ beads. In particular,
\[\#X_{s,k}=\sum_{s_1+\cdots+s_k=s}\rk(s_1)c_{s_1}\cdots c_{s_k}.\]
When $S=\Z_{\geq 1}$, the theory of Riordan arrays gives a surprising alternate enumeration for $(X_{n,k})$, the Lyndon structure of festoons of length $n$ with $k$ beads coloured by $(c_n)$.

\begin{theorem}\label{thm:riordan}
If $C(x)=\sum_{n\geq 1}c_nx^n$ satisfies the functional equation $C(x)=xD(C(x))$ for some formal Laurent series $D(t)\in\Z((t))$, then
\[\#X_{n,k}=[t^{n-k}]D(t)^n.\]
\end{theorem}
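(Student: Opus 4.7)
The plan is to reduce this to a classical application of the Lagrange inversion formula. Recall that by construction $\#X_{n,k} = \sum_{n_1 + \cdots + n_k = n} n_1 c_{n_1} \cdots c_{n_k}$, where the sum ranges over ordered tuples of positive integers with fixed sum. The weighting $n_1$ singles out the first bead, but I can symmetrise: since the underlying sum is invariant under permutations of the $n_i$, averaging the weight $n_i$ over $i = 1, \ldots, k$ replaces $n_1$ with $(n_1 + \cdots + n_k)/k = n/k$. Hence
\[\#X_{n,k} = \frac{n}{k} \sum_{n_1 + \cdots + n_k = n} c_{n_1} \cdots c_{n_k} = \frac{n}{k} \, [x^n] C(x)^k.\]

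Now the functional equation $C(x) = x D(C(x))$ is exactly the hypothesis of Lagrange inversion: if $C(x)$ is a formal power series with $C(0) = 0$ and $C'(0) \neq 0$ satisfying $C(x) = x D(C(x))$, then for every $k \geq 1$,
\[[x^n] C(x)^k = \frac{k}{n} [t^{n-k}] D(t)^n.\]
Substituting this into the expression above, the factors of $n/k$ and $k/n$ cancel, giving $\#X_{n,k} = [t^{n-k}] D(t)^n$ as desired.

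The only subtlety is that the paper allows $D(t)$ to be a formal Laurent series rather than a power series, so I should check that the standard Lagrange inversion argument still applies. Assuming $D(t) \in \Z((t))$ has only finitely many negative-degree terms, the equation $C(x) = x D(C(x))$ still determines $C(x)$ as a formal power series with no constant term (since $c_n$ is defined only for $n \geq 1$), and the Lagrange inversion formula extends to this setting because $[t^{n-k}]D(t)^n$ is a well-defined coefficient in $\Z((t))$ whenever $n \geq 1$. I would verify this extension by writing $D(t) = t^{-r} \tilde D(t)$ for a power series $\tilde D$ and re-deriving the inversion identity via contour manipulation of residues, or by quoting the Laurent version of Lagrange inversion directly. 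The combinatorial rewriting in the first paragraph is the only genuinely new step; the rest is a direct invocation of a standard identity.
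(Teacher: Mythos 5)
Your proof is correct, but it takes a different route from the paper. You symmetrise the weight $n_1$ over the $k$ beads to get $\#X_{n,k}=\tfrac{n}{k}[x^n]C(x)^k$ and then invoke Lagrange inversion, $[x^n]C(x)^k=\tfrac{k}{n}[t^{n-k}]D(t)^n$, so the prefactors cancel. The paper instead packages the same starting identity as the statement that the generating function of $\#X_{n,k}$ in $x$ is $xC'(x)C(x)^{k-1}$ (equivalent to your symmetrisation, since $xC'C^{k-1}=\tfrac{x}{k}(C^k)'$), recognises the array $[\#X_{n+1,k+1}]$ as the Riordan array $\mathcal{R}(C'(x),C(x))$, and applies the ``inverse-inverse trick'': the formula for the entries of the inverse of a Riordan array, applied to the inverse $\mathcal{R}(1/d(\overline h),\overline h)$, yields $\#X_{n,k}=[t^{n-k}](t/\overline C(t))^{n+1-1}$ after simplification, and $D(t)=t/\overline C(t)$ by the functional equation. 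The two arguments are morally the same computation (the Riordan inverse formula is itself a consequence of Lagrange inversion), but yours is more self-contained and avoids importing two external Riordan-array theorems, while the paper's fits the result into a standard framework it cites. The one point you flag but do not fully close is the Laurent-series hypothesis on $D$: note that the functional equation $C(x)=xD(C(x))$ together with $c_1\neq 0$ forces $D(t)=t/\overline C(t)$, which is an honest power series with nonzero constant term, so the classical Lagrange inversion you quote applies verbatim and no genuinely Laurent extension is needed; the paper's phrasing is only a safety margin, and its own proof likewise relies on $\overline C$ existing.
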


We give two Catalan-based examples below, and a Schr\"oder-based example in Example \ref{ex:strict-schroder}. Let $\mathrm{Cat}_n$ denote the $n^\text{th}$ Catalan number.

\begin{example}
If $D(t)=1+t^2$, then solving for $C(x)$ gives
\[c_n=\begin{cases}\mathrm{Cat}_{(n-1)/2} & \text{if $n$ is odd},\\
0 & \text{otherwise}.\end{cases}\] 
So $c_n$ counts words of length $n-1$ on the alphabet $\{(,)\}$ which have matched brackets. Adding a new letter $|$ to the end of each word, we label each of the $c_n$ beads of length $n$ by such a word of length $n$. 

Using this labelling, $(X_{n,k})$ is isomorphic to the Lyndon structure of cyclic words on the alphabet $\{(,),|\}$ with $n$ letters, $k$ of which are $|$, where brackets are matched and no letter $|$ occurs within a bracket pair. For example, $X_{6,2}$ has orbits:
\[\begin{tikzpicture}[scale = 0.8]
\foreach \x in {0,1,2} {
	\draw (3*\x,0) circle (1);
	\foreach \ang in {0,1,2,3,4,5} {
		\draw [white, fill = white] ($(3*\x,0)+(60*\ang:1)$) circle (0.25);
	}
}
\node [rotate = -90] at ($(0,0)+(0:1)$) {$($};
\node [rotate = -30] at ($(0,0)+(60:1)$) {$|$};
\node [rotate = 30] at ($(0,0)+(120:1)$) {$|$};
\node [rotate = 90] at ($(0,0)+(180:1)$) {$)$};
\node [rotate = 150] at ($(0,0)+(240:1)$) {$)$};
\node [rotate = 210] at ($(0,0)+(300:1)$) {$($};

\node [rotate = -90] at ($(3,0)+(0:1)$) {$($};
\node [rotate = -30] at ($(3,0)+(60:1)$) {$|$};
\node [rotate = 30] at ($(3,0)+(120:1)$) {$|$};
\node [rotate = 90] at ($(3,0)+(180:1)$) {$)$};
\node [rotate = 150] at ($(3,0)+(240:1)$) {$($};
\node [rotate = 210] at ($(3,0)+(300:1)$) {$)$};

\node [rotate = -90] at ($(6,0)+(0:1)$) {$|$};
\node [rotate = -30] at ($(6,0)+(60:1)$) {$)$};
\node [rotate = 30] at ($(6,0)+(120:1)$) {$($};
\node [rotate = 90] at ($(6,0)+(180:1)$) {$|$};
\node [rotate = 150] at ($(6,0)+(240:1)$) {$)$};
\node [rotate = 210] at ($(6,0)+(300:1)$) {$($};
\end{tikzpicture}\]
By Theorem \ref{thm:riordan}, $\#X_{n,k}$ is equal to the number of compositions of $n-k$ into $n$ parts from the alphabet $\{0,2\}$. Taking the $q$-analogue of this enumeration gives the $q$-Gauss congruence
\[f_{n,k}(q)=\begin{cases}\text{\small $\qbinom{n}{(n-k)/2}$} & \text{if $n-k$ is even},\\0 & \text{otherwise}\end{cases}\]
and hence $(X_{n,k},\Cyc_n,f_{n,k}(q))$ satisifes the CSP for every $n,k\geq 1$.
\end{example}

\begin{example}
If $D(t)=1/(1-t)$ then $c_n=\mathrm{Cat}_{n-1}$ for every $n\geq 1$. By ($\text{j}^5$) of Stanley's Catalan addendum \cite{catalan}, $c_n$ counts ways to fully connect the vertices of the interval graph $\mathcal{I}_n$ by noncrossing arcs above the interval with distinct right endpoints. Labelling the beads by such pictures, $(X_{n,k})$ is isomorphic to the following Lyndon structure: colour $k$ vertices of $\Gamma_n$,  and add noncrossing arcs between the vertices so that from every vertex there is a unique path along arcs travelling clockwise, and this path terminates at the nearest coloured vertex. We exhibit some elements of $X_{8,3}$ below.
\[\begin{tikzpicture}[x=-1cm, scale = 0.8]
\draw [thick] \ortharc{0}{0}{1}{0}{90};
\draw [thick] \ortharc{0}{0}{1}{45}{90};
\draw [thick] \ortharc{0}{0}{1}{135}{270};
\draw [thick] \ortharc{0}{0}{1}{180}{225};
\draw [thick] \ortharc{0}{0}{1}{225}{270};

\draw [thick] \ortharc{3}{0}{1}{45}{90};
\draw [thick] \ortharc{3}{0}{1}{90}{135};
\draw [thick] \ortharc{3}{0}{1}{135}{180};
\draw [thick] \ortharc{3}{0}{1}{180}{225};
\draw [thick] \ortharc{3}{0}{1}{225}{270};

\draw [thick] \ortharc{6}{0}{1}{45}{180};
\draw [thick] \ortharc{6}{0}{1}{90}{180};
\draw [thick] \ortharc{6}{0}{1}{135}{180};
\draw [thick] \ortharc{6}{0}{1}{180}{225};
\draw [thick] \ortharc{6}{0}{1}{270}{315};

\draw [thick] \ortharc{9}{0}{1}{0}{90};
\draw [thick] \ortharc{9}{0}{1}{45}{90};
\draw [thick] \ortharc{9}{0}{1}{90}{135};
\draw [thick] \ortharc{9}{0}{1}{180}{225};
\draw [thick] \ortharc{9}{0}{1}{270}{315};

\foreach \x in {0,1,2,3} {
	\draw [dashed, thin] (3*\x,0) circle (1);
	\foreach \ang in {0,1,2,3,4,5,6,7} {
		\draw [fill = white] ($(3*\x,0)+(45*\ang:1)$) circle (0.1);
	}
}

\draw [fill = black] ($(0,0)+(90:1)$) circle (0.1);
\draw [fill = black] ($(0,0)+(270:1)$) circle (0.1);
\draw [fill = black] ($(0,0)+(315:1)$) circle (0.1);
\draw [fill = black] ($(3,0)+(270:1)$) circle (0.1);
\draw [fill = black] ($(3,0)+(315:1)$) circle (0.1);
\draw [fill = black] ($(3,0)+(360:1)$) circle (0.1);
\draw [fill = black] ($(6,0)+(225:1)$) circle (0.1);
\draw [fill = black] ($(6,0)+(315:1)$) circle (0.1);
\draw [fill = black] ($(6,0)+(360:1)$) circle (0.1);
\draw [fill = black] ($(9,0)+(135:1)$) circle (0.1);
\draw [fill = black] ($(9,0)+(225:1)$) circle (0.1);
\draw [fill = black] ($(9,0)+(315:1)$) circle (0.1);
\end{tikzpicture}\]
By Theorem \ref{thm:riordan}, $\#X_{n,k}$ is the number of compositions of $n-k$ into $n$ parts from the alphabet $\{0,1,2,\dots\}$. An associated $q$-Gauss congruence is
\[f_{n,k}(q)=\qbinom{2n-k-1}{n-k}\]
and $(X_{n,k},\Cyc_n,f_{n,k}(q))$ satisfies the CSP for every $n,k\geq 1$.
\end{example}

\section{Tubings} \label{sect:tubings}

We give two examples of Lyndon structures and their cyclic sieving results for tubings on the cycle graph $\Gamma_n$. The first uses Theorem \ref{thm:riordan}. We begin with some background on these objects.

\subsection{Background}

Tubings are a combinatorial object on graphs which correspond to the faces of the graph associahedron (see \cite{postnikov}). In this section we consider tubings on the interval and cycle graphs, whose graph associahedra correspond to the associahedron and cyclohedron respectively.

\begin{definition}\label{def:tubing}
Let $\Gamma$ be a connected simple graph. A \emph{tube} is a nonempty connected subset of vertices in $\Gamma$, and a \emph{tubing} is a set $T$ of tubes in $\Gamma$ so that for any $t_1,t_2\in T$:
\begin{itemize}
\item $t_1\subseteq t_2$, $t_2\subseteq t_1$, or
\item if $v_1$ and $v_2$ are vertices in $t_1$ and $t_2$ respectively, then $v_1\neq v_2$ and there is no edge between $v_1$ and $v_2$ in $\Gamma$.
\end{itemize}
Given a tubing $T$, a vertex in $\Gamma$ is \emph{free} if it is not contained in any tube of $T$. A tubing is \emph{proper} if there are no free vertices.
\end{definition}

On the interval graph $\mathcal{I}_n$, a vertex in $\mathcal{I}_n$ is \emph{final} if it is the last vertex in a tube which is not contained in a subtube. Below is the set of proper tubings on $\mathcal{I}_3$, with final vertices coloured in black.
\[\begin{tikzpicture}[scale = 0.8]
\foreach \x in {0,1,2,3,4,5,6,7,8,9,10} {
	\draw [fill = blu!25] (\x-0.3,2) arc (180:0:0.3) -- (\x+0.3,0) arc (0:-180:0.3) -- cycle;
}
\foreach \x in {4,6,7} {\draw [fill = blu!50] (\x-0.25,2) arc (180:0:0.25) -- (\x+0.25,1) arc (0:-180:0.25) -- cycle;}
\foreach \x in {5,8,9} {\draw [fill = blu!50] (\x-0.25,1) arc (180:0:0.25) -- (\x+0.25,0) arc (0:-180:0.25) -- cycle;}
\foreach \x in {1,10} {\draw [fill = blu!50] (\x-0.25,2) arc (180:0:0.25) -- (\x+0.25,2) arc (0:-180:0.25) -- cycle;}
\foreach \x in {6} {\draw [fill = blu] (\x-0.2,2) arc (180:0:0.2) -- (\x+0.2,2) arc (0:-180:0.2) -- cycle;}
\foreach \x in {2} {\draw [fill = blu!50] (\x-0.25,1) arc (180:0:0.25) -- (\x+0.25,1) arc (0:-180:0.25) -- cycle;}
\foreach \x in {7,8} {\draw [fill = blu] (\x-0.2,1) arc (180:0:0.2) -- (\x+0.2,1) arc (0:-180:0.2) -- cycle;}
\foreach \x in {3,10} {\draw [fill = blu!50] (\x-0.25,0) arc (180:0:0.25) -- (\x+0.25,0) arc (0:-180:0.25) -- cycle;}
\foreach \x in {9} {\draw [fill = blu] (\x-0.2,0) arc (180:0:0.2) -- (\x+0.2,0) arc (0:-180:0.2) -- cycle;}

\foreach \x in {0,1,2,3,4,5,6,7,8,9,10} {
	\draw (\x,0) -- (\x,2);
	\foreach \y in {1,2} {\draw [fill = white] (\x,\y) circle (0.1);}
	\draw [fill = black] (\x,0) circle (0.1);
}
\draw [fill = black] (1,2) circle (0.1);
\draw [fill = black] (2,1) circle (0.1);
\draw [fill = black] (3,1) circle (0.1);
\draw [fill = black] (4,1) circle (0.1);
\draw [fill = black] (5,2) circle (0.1);
\foreach \x in {6,7,8,9,10} {
	\draw [fill = black] (\x,1) circle (0.1);
	\draw [fill = black] (\x,2) circle (0.1);
}
\end{tikzpicture}\]

\subsection{Bijections with lattice paths}

We count tubings via bijections with lattice path walks. Fix the vectors 
\[{\nearrow}:=(1,1),\quad{\searrow}:=(1,-1),\quad{\longrightarrow}:=(2,0)\]
in $\Z^2$, and let $V:=\{{\nearrow},{\searrow},{\longrightarrow}\}$. A \emph{Delannoy path} of \emph{length $n$} is a word $v_1\cdots v_m$ in $V$ such that $v_1+\cdots+v_m=(n,0)$. Each $v_i$ is called a \emph{step} and the \emph{height} of this step is the $y$-coordinate of $v_1+\cdots+v_{i-1}$. A \emph{Schr\"oder path} is a Delannoy path where all steps have nonnegative height, and this path is \emph{strict} if all steps ${\longrightarrow}$ have positive height.

We use these lattice path walks to enumerate tubings by finding relevant bijections. In the case of $\Gamma_n$, we believe such a bijection is novel. We also give a lemma about strict Schr\"oder paths which will be used in Example \ref{ex:strict-schroder}.

\begin{lemma}\label{lem:schroder-bij}
There is a bijection between tubings on $\mathcal{I}_n$ and Schr\"oder paths of length $2n$ where:
\begin{itemize}
\item tubes correspond to steps ${\nearrow}$,
\item nonfinal vertices correspond to steps ${\longrightarrow}$, and
\item free vertices correspond to steps ${\longrightarrow}$ at height $0$.
\end{itemize}
\end{lemma}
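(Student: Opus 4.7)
The plan is to construct an explicit bijection $\Phi$ from tubings on $\mathcal{I}_n$ to Schr\"oder paths of length $2n$, together with its inverse $\Psi$, so that the three stated correspondences fall out of the construction itself.

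Given a tubing $T$ on $\mathcal{I}_n$, I define $\Phi(T)$ by processing vertices $1, 2, \ldots, n$ in order: at vertex $i$, first emit a ${\nearrow}$ step for each tube of $T$ beginning at $i$ (outermost to innermost), then emit a single step for $i$ itself --- namely ${\searrow}$ if $i$ is the final vertex of some tube and ${\longrightarrow}$ otherwise. Since each tube contributes one ${\nearrow}$ matched by one ${\searrow}$ at its final vertex, and each nonfinal vertex contributes one ${\longrightarrow}$, the resulting path has length $2n$ and returns to height zero; heights remain nonnegative because the current height counts tubes whose ${\nearrow}$ has been emitted but whose matching ${\searrow}$ has not. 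The correspondences tubes $\leftrightarrow {\nearrow}$ and nonfinal vertices $\leftrightarrow {\longrightarrow}$ are then immediate. For the free-vertex correspondence, I argue that the height at a vertex $i$'s ${\longrightarrow}$ step counts tubes $t$ with first vertex at most $i$ and final vertex at least $i$; every such $t$ contains $i$, and conversely if $i$ lies in any tube at all then its innermost containing tube $t^\ast$ has $i$ in its remnant and so has final vertex at least $i$. Hence the height is zero precisely when $i$ is free.

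To define $\Psi$, I parse a Schr\"oder path $p$ uniquely into $n$ blocks $B_1, \ldots, B_n$ of the form ${\nearrow}^\ast X$ with $X \in \{{\searrow}, {\longrightarrow}\}$. Each ${\nearrow}$ in $B_i$ opens a tube at vertex $i$, and standard bracket matching associates it with a later ${\searrow}$ in some $B_j$, yielding a tube with first vertex $i$ and final vertex $j$. I then define the rightmost vertex $r$ of such a tube recursively: if $B_{j+1}$ does not begin with ${\nearrow}$ (or $j = n$), set $r = j$; otherwise let $t'$ be the tube opened by the first ${\nearrow}$ of $B_{j+1}$, and set $r$ equal to the rightmost vertex of $t'$. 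The recursion terminates because each step moves to a tube with strictly larger first vertex.

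The main obstacle is verifying that $\Psi(p)$ is a valid tubing --- that any two resulting tubes are either nested or separated by at least one vertex. This splits into cases according to whether the bracket pairs of two tubes are nested or disjoint in $p$, combined with how the extension rule propagates rightmost vertices along chains of subsequent ${\nearrow}$-started blocks. The crucial observation is that two bracket-disjoint pairs either have their gap bridged by a chain forcing one tube's interval to contain the other's (nesting), or the chain terminates early and leaves at least one intermediate vertex not in either tube (separation); a parallel argument on the $r$-recursion handles bracket-nested pairs. Once well-definedness is established, the identities $\Psi \circ \Phi = \mathrm{id}$ and $\Phi \circ \Psi = \mathrm{id}$ follow by direct step-by-step inspection: under $\Phi$ each vertex contributes exactly one block, and the assignment of ${\nearrow}$'s to tubes is inverted step for step by $\Psi$.
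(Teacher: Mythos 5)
Your construction is essentially the paper's: the forward map (a ${\nearrow}$ for each tube opening at a vertex, a ${\searrow}$ at each final vertex, a ${\longrightarrow}$ at each nonfinal vertex) is identical, and your block decomposition with the rightmost-vertex recursion is exactly the paper's rule of closing the tube opened by a ${\nearrow}$ at height $h$ immediately before the next ${\longrightarrow}$ or ${\searrow}$ at height $h$. Both arguments leave the verification that the inverse produces a valid tubing at the level of a sketch (yours is in fact slightly more explicit about the case analysis than the paper's), so the proposal is correct and matches the paper's proof.
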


\begin{lemma}\label{lem:cycle-tubings}
There is a bijection between improper tubings on $\Gamma_{n}$ with $k$ tubes and Delannoy paths of length $2(n-1)$ with $n-k-1$ steps $\longrightarrow$.
\end{lemma}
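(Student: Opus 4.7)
The plan is to prove Lemma \ref{lem:cycle-tubings} by constructing an explicit bijection that extends the construction of Lemma \ref{lem:schroder-bij} to the cyclic case. First I would verify that the step counts on both sides agree. A Delannoy path of length $2(n-1)$ with $n-k-1$ right steps must have exactly $k$ up steps and $k$ down steps (to return to height zero), giving $n+k-1$ steps in total. A tubing on $\Gamma_n$ with $k$ tubes has $k$ tube-starts and $k$ tube-ends; to obtain the $n-k-1$ right steps we need that many non-final vertices, which requires distinguishing one extra ``pseudo-final'' vertex playing the role of the cyclic cut point.

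Next I would specify this anchor canonically. Since tubes on $\Gamma_n$ are proper arcs (the full vertex set is excluded, as reflected by the central Delannoy count $D_{n-1}$), every improper tubing has at least one free vertex; let $v^*$ be the smallest-indexed such vertex. I traverse $\Gamma_n$ clockwise starting just after $v^*$ and apply the rule of Lemma \ref{lem:schroder-bij} --- each tube-start gives $\nearrow$, each tube-end gives $\searrow$, each non-final vertex gives $\rightarrow$, and the anchor $v^*$ at the end is treated as pseudo-final and omits its right step. The resulting word has the required step composition.

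The main obstacle is that this naive construction yields only Schr\"oder paths (staying at non-negative height), which form a proper subset of the Delannoy paths of length $2(n-1)$; the remainder dip below the $x$-axis. I would resolve this by enriching the map: rather than always cutting at a free vertex, I would cut at the fixed edge $\{v_n,v_1\}$ and treat tubes wrapping this edge as producing below-zero excursions, with the nesting of wrap-tubes encoding the shape of the dip. Equivalently, the cyclic offset of $v^*$ from $v_1$ is recorded as an initial shift of the path's baseline, and a cycle-lemma style argument identifies each Delannoy path with a unique tubing.

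Finally I would complete the bijection by constructing its inverse: decompose a Delannoy path at its returns to the $x$-axis into above-zero excursions, below-zero excursions, and plateau right steps; interpret these via the inverse of Lemma \ref{lem:schroder-bij} as non-wrapping tubes, wrapping tubes, and free vertices respectively; and reassemble around $\Gamma_n$ using the recorded cyclic offset. Verifying the construction on small cases --- for $n=3$ both sides equal $13$ --- would confirm that the enriched construction correctly pairs all thirteen tubings with all thirteen Delannoy paths of length $4$ before the general argument is carried out.
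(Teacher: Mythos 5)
Your setup is sound: the step-count check is correct, cutting at a free vertex to reduce to Lemma \ref{lem:schroder-bij} is the right starting point, and you have correctly identified the two obstacles (the naive linearisation produces only Schr\"oder paths, and it is not injective because it forgets where on $\Gamma_n$ the cut occurred). The gap is that you never actually overcome these obstacles. The ``enrichment'' is asserted in two forms that are neither made precise nor shown to be equivalent, and each fails as stated: a Delannoy path begins and ends at height $0$, so there is no ``initial shift of the path's baseline'' in which to record the cyclic offset of $v^*$; and identifying below-zero excursions with tubes wrapping a fixed cut edge is already wrong on counts for $n=3$, $k=1$, where three of the six Delannoy paths of length $4$ with one step $\longrightarrow$ dip below the axis (namely ${\searrow}{\nearrow}{\longrightarrow}$, ${\searrow}{\longrightarrow}{\nearrow}$, ${\longrightarrow}{\searrow}{\nearrow}$) but only one of the six tubings with one tube has a tube containing the edge $\{v_n,v_1\}$. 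Your proposed inverse likewise invokes ``the recorded cyclic offset'', which is not recorded anywhere in a Delannoy path. The entire content of the lemma is exactly the piece you defer to ``a cycle-lemma style argument''.

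To close the gap you must keep the forgotten positional information as explicit data and then trade it for the below-zero behaviour. The paper does this by showing both sides biject with \emph{marked} Schr\"oder paths: pairs $(p,j)$ where $p$ has length $2n$ and $n-k$ steps $\longrightarrow$, one of which is the final step, and $j$ marks a step ${\searrow}$ or ${\longrightarrow}$ not occurring after the first step ${\longrightarrow}$ at height $0$; the mark records which vertex of the first bead is the distinguished vertex of $\Gamma_n$, and the number of admissible marks equals the number of rotations counted in Theorem \ref{thm:festoon-X}. One then deletes the final step and cyclically rotates so that the portion after $v_j$ comes first, and recovers $j$ in the inverse from the last step of minimal height. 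A complete proof along your lines must (i) define the marked object and verify it counts tubings, (ii) define the rotation, and (iii) prove the rotation is a bijection onto all Delannoy paths with the stated step content; none of these appears in the current proposal, and checking that both sides have $13$ elements when $n=3$ tests only the statement, not the construction.
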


\begin{lemma}\label{lem:strict-schroder}
Let $c_n$ be the number of strict Schr\"oder paths of length $2(n-1)$, and define the generating function $C(x)=\sum_{n\geq 1}c_nx^n$. Then
\[C(x)=x\left(1+\frac{1-C(x)}{1-2C(x)}\right)\]
\end{lemma}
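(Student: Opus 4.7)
The plan is to prove the identity in two stages: first derive an algebraic relation between $C(x)$ and $x$ from a first-return decomposition of strict Schröder paths, and then manipulate that relation into the stated rational form.

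For the first stage, I would introduce auxiliary generating functions $L(x)=\sum_{n\geq 0}l_nx^n$ and $S(x)=\sum_{n\geq 0}s_nx^n$, where $l_n$ counts (large) Schröder paths of length $2n$ and $s_n$ counts strict Schröder paths of length $2n$, so that $C(x)=xS(x)$ by the lemma's indexing. A first-return decomposition for large paths yields $L=1+xL+xL^2$ (partitioning by empty path, a leading $\longrightarrow$, or a leading arch $\nearrow\cdots\searrow$ followed by a large path). A nonempty strict path must begin with $\nearrow$ (horizontals at height $0$ are forbidden), followed by a large Schröder path lifted one unit, closed by a $\searrow$, and continued at ground level by another strict path, which gives $S=1+xLS$. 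Eliminating $L$ between these two equations produces a quadratic relation for $S$, and substituting $S=C/x$ converts it to a polynomial identity in $C$ and $x$.

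For the second stage, I would clear denominators in the target identity $C=x(1+(1-C)/(1-2C))$ to obtain the equivalent polynomial form $C(1-2C)=x\bigl((1-2C)+(1-C)\bigr)$, and verify that this coincides with the polynomial relation from the first stage, checking throughout that the chosen algebraic branch is the one with $C(0)=0$. For a more combinatorial flavour, I would rewrite the right-hand side as $x+x(1-C)/(1-2C)$: the $x$ accounts for the empty strict path (giving $c_1=1$), while the expansion $x(1-C)\sum_{k\geq 0}(2C)^k$ should correspond to an arch decomposition of a nonempty path, with each of the $k$ interior arch seams carrying a two-valued decoration and the $(1-C)$ factor closing off the sequence.

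The main obstacle is bookkeeping: confirming that the polynomial identity produced by the first-return decomposition, after regrouping, is exactly the one obtained from the stated formula rather than an equivalent variant differing by indexing conventions. If it agrees only up to an additional shift or factor, this would signal a minor normalisation adjustment in the arch decomposition (for instance, whether the empty path, or the leading $\nearrow\searrow$, is treated as the base case), which would need to be traced carefully through the substitution $S=C/x$.
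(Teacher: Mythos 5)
Your decomposition is sound and takes a genuinely different route from the paper's. The paper decomposes a nonempty strict Schr\"oder path directly into strict pieces: writing it as $\nearrow P_1 \longrightarrow P_2 \longrightarrow \cdots \longrightarrow P_k \searrow P_{k+1}$, splitting at the $\longrightarrow$ steps at height $1$ occurring before the first return to height $0$, each $P_i$ is again strict, and the length bookkeeping gives the single equation $C = x + C^2 + C^3 + \cdots = x + C^2/(1-C)$ with no auxiliary series. You instead couple strict paths to large Schr\"oder paths via $S = 1 + xLS$ and $L = 1 + xL + xL^2$ and eliminate; both of your equations are correct, $C = xS$ is the right dictionary, and the elimination yields $2C^2 - xC - C + x = 0$, i.e.\ $x(1-C) = C(1-2C)$, i.e.\ $C = x\,\frac{1-C}{1-2C}$, which is exactly the relation that the paper's $C = x + C^2/(1-C)$ rearranges to. Your route costs an extra generating function but avoids having to argue that the interior arch pieces are themselves strict.

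The reconciliation step you defer to the end will indeed fail, but not because of a normalisation slip in your decomposition: the identity as printed in the lemma is a typo. Since $(1-C)/(1-2C) \to 1$ as $x \to 0$, the printed right-hand side $x\bigl(1 + \frac{1-C}{1-2C}\bigr) = x\,\frac{2-3C}{1-2C}$ has leading term $2x$, which would force $c_1 = 2$, whereas the empty path gives $c_1 = 1$. The intended statement is $C = x\bigl(1 + \frac{C}{1-2C}\bigr) = x\,\frac{1-C}{1-2C}$, equivalently $D(t) = \frac{1-t}{1-2t} = 1 + t + 2t^2 + 4t^3 + \cdots$, which is the series actually used in Example \ref{ex:strict-schroder} (where the same stray ``$1+{}$'' appears: as written, $1+\frac{1-t}{1-2t}$ expands to $2+t+2t^2+\cdots$, not to the stated $1+t+2t^2+\cdots$). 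The paper's own proof establishes $C = x + C^2/(1-C)$ and is consistent with the corrected statement, not the printed one. So your derivation is right, the discrepancy you anticipated is real, and the fix belongs in the lemma's statement rather than in your argument.
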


\subsection{Cyclic sieving results}

Let $c_n$ be the number of tubings of $\mathcal{I}_n$ where only the last vertex is free, which is equal to the number of strict Schr\"oder paths of length $n-1$. Colouring beads by such tubings, we observe that improper tubings on $\Gamma_n$ give a Lyndon structure isomorphic to festoons coloured by $(c_n)$. In the inverse map, a new bead is started after every free vertex.

\begin{example}\label{ex:strict-schroder}
Let $X_{n,k}$ be the set of improper tubings on $\Gamma_n$ with $k$ free vertices. Then $(X_{n,k})$ is a Lyndon structure isomorphic to festoons of length $n$ with $k$ beads coloured by $(c_n)$. Below are some elements of $X_{8,2}$.
\[\begin{tikzpicture}[scale = 0.8]
\draw \tube{0}{0}{1}{0.3}{2*45}{5*45};
\draw \tube{0}{0}{1}{0.25}{3*45}{5*45};
\draw \tube{0}{0}{1}{0.2}{3*45}{3*45};
\draw \tube{0}{0}{1}{0.2}{5*45}{5*45};
\draw \tube{0}{0}{1}{0.3}{7*45}{8*45};

\draw \tube{3}{0}{1}{0.3}{3*45}{6*45};
\draw \tube{3}{0}{1}{0.25}{6*45}{6*45};
\draw \tube{3}{0}{1}{0.25}{3*45}{3*45};
\draw \tube{3}{0}{1}{0.3}{0*45}{1*45};
\draw \tube{3}{0}{1}{0.25}{0*45}{0*45};

\draw \tube{6}{0}{1}{0.3}{1*45}{6*45};

\foreach \x in {0,1,2} {
	\draw (3*\x,0) circle (1);
}
\foreach \x in {0,1,2} {
	\foreach \ang in {0,1,2,3,4,5,6,7} {
		\draw [fill = black] ($(3*\x,0)+(45*\ang:1)$) circle (0.1);
	}
}
\end{tikzpicture}\]
By Lemma \ref{lem:strict-schroder} and Theorem \ref{thm:riordan}, $\#X_{n,k}=[t^{n-k}]D(t)^n$, where
\[D(t)=1+\frac{1-t}{1-2t}=1+t+2t^2+4t^3+\cdots.\] 
So $(X_{n,k})$ is enumerated by compositions of $n-k$ into $n$ parts from the alphabet $\{0,1,2,\dots\}$, where parts of size $i\geq 1$ are coloured in one of $2^{i-1}$ ways. We claim that
\[f_{n,k}(q)=\sum_{m=0}^{n-k-1}\qbinom{n}{m+k}\qbinom{n-k-1}{m}[\exp_2(m)]_q.\]
gives an associated $q$-Gauss congruence $(f_{n,k}(q))$. Letting $m+k$ be the number of nonzero parts in each composition, we see that $\#X_{n,k}=f_{n,k}(1)$. By chaining, the family
\[g_{n,k,m}(q)=\qbinom{n}{k}\qbinom{k+m-1}{m}[\exp_2(m)]_q\]
satisfies $q$-Gauss congruence on $\Z_{\geq 1}[\Z][\Z]$. Pulling back along $(n,k,m)\mapsto (n,n-m-k,m)$ and pushing forward along $(n,k,m)\mapsto (n,k)$, we obtain $(f_{n,k}(q))$. Hence, $(X_{n,k},\Cyc_n,f_{n,k}(q))$ satisfies the CSP for every $n,k\geq 1$.
\end{example}

\begin{example}
Let $X_{n,k}$ be the set of improper tubings on $\Gamma_n$ with $k$ tubes. Then $(X_{n,k})$ is a Lyndon structure on $\Z_{\geq 1}[\Z_{\geq 0}]$ and by Lemma \ref{lem:cycle-tubings} the polynomials
\[f_{n,k}(q)=\qbinom{n+k-1}{k}\qbinom{n-1}{k}\]
satisfy $f_{n,k}(1)=\#X_{n,k}$. Moreover, $(f_{n,k}(q))$ satisfies $q$-Gauss congruence, obtained by pulling back the $q$-Gauss congruence
\[g_{n,m,k}(q)=\qbinom{n+m-1}{m}\qbinom{m+k-1}{m}\]
on $\Z_{\geq 1}[\Z][\Z]$ along the morphism $(n,k)\mapsto (n,k,n-k)$. Hence, $(X_{n,k},\Cyc_n,f_{n,k}(q))$ satisfies the CSP for every $n\geq 1$ and $k\geq 0$. Taking the sum over all $k$ recovers Theorem \ref{thm:tubings-intro}.
\end{example}
\begin{remark}
Fix $\lambda\geq 1$. Multiplying $f_{n,k}(q)$ by $[\exp_\lambda(k)]_q$ gives a generalisation of the Theorem where each tube can be coloured in one of $\lambda$ ways.
\end{remark}

\section{Signed festoons} \label{sect:signed}

Lyndon structures can only be combinatorial analogues for Gauss congruences with $b_s\geq 0$ for every $s$. By taking a signed enumeration of festoons coloured by an arbitrary integer sequence $(c_s)$, we state a weakening of Theorem \ref{thm:festoon-X} which holds for all Gauss congruences. We conclude with a motivating example. 

Fix a ranked semigroup $S$ with finite decomposition and an integer sequence $(c_s)$. Let $X_s$ be the set of festoons of type $s$ coloured by $(|c_s|)$. We call a festoon \emph{positive} if it has beads of types $s_1,s_2,\dots$ such that $c_{s_1}c_{s_2}\cdots>0$, and \emph{negative} otherwise. We call elements of $X_s$ \emph{signed festoons}, and let $X_s=X_s^+\sqcup X_s^-$ be the decomposition into positive and negative festoons. The signed enumeration of these festoons will be the Gauss congruence associated to $(c_s)$, and we obtain a signed analogue of the cyclic sieving phenomenon for odd ranks.

\begin{definition}
Let $X=X^+\sqcup X^-$ be a finite set with an action of $\Cyc_n$ such that $X^+$ and $X^-$ are fixed setwise. If $f(q)\in\Z[q]$ is a polynomial such that
\[f(\omega_d)=\#(X^+)^{\Cyc_d}-\#(X^-)^{\Cyc_d}\]
for every $d\mid n$, then $(X,\Cyc_n,f_n(q))$ satisfies the \emph{signed cyclic sieving phenomenon}.
\end{definition}

\begin{theorem}\label{thm:signed}
Fix an arbitrary sequence $(a_s)$ satisfying Gauss congruence on $S$, with associated sequence $(c_s)$ and $q$-Gauss congruence $(f_s(q))$. Let $X_s=X_s^+\sqcup X_s^-$ be the set of signed festoons of type $s$ coloured $(c_s)$. Then $a_s:=\#X_s^+-\#X_s^-$ for every $s$, and $(X_s,\Cyc_{\rk(s)},f_s(q))$ satisfies the signed cyclic sieving phenomenon when $\rk(s)$ is odd.
\end{theorem}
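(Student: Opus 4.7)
The plan is to mirror the proof of Theorem \ref{thm:festoon-X}, tracking signs with a weight given by $\mathrm{sign}(c_{s_1}c_{s_2}\cdots)$ on each festoon. The first observation is that this sign depends only on the multiset of bead types appearing in the festoon, and is therefore invariant under rotation of $\Cyc_{\rk(s)}$. Hence $X_s^+$ and $X_s^-$ are well-defined $\Cyc_{\rk(s)}$-stable subsets, and the decomposition $X_s=X_s^+\sqcup X_s^-$ is preserved setwise by the action.

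For the enumeration claim, I repeat the argument of Theorem \ref{thm:festoon-X} but with signs. Each ordered decomposition $s=s_1+s_2+\cdots$ contributes $\rk(s_1)|c_{s_1}||c_{s_2}|\cdots$ linear bead arrangements (with a rotation factor $\rk(s_1)$ to place the first bead over a chosen vertex of $\Gamma_{\rk(s)}$), and the sign $\mathrm{sign}(c_{s_1}c_{s_2}\cdots)$ places each resulting festoon into $X_s^+$ or $X_s^-$. Summing over decompositions and comparing to (\ref{eq:ac1S}) yields $a_s=\#X_s^+-\#X_s^-$.

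For the signed CSP, fix $d\mid\rk(s)$. As in Theorem \ref{thm:festoon-X}, a festoon is fixed by $\Cyc_d$ if and only if it is the $d$-fold repetition of a festoon in $X_t$ for a unique $t\in s/d$, giving a $\Cyc_d$-equivariant bijection $X_s^{\Cyc_d}\cong\bigsqcup_{t\in s/d}X_t$. If the underlying festoon in $X_t$ has beads of types $s_1,\ldots,s_k$, then its $d$-fold repetition has sign $\mathrm{sign}(c_{s_1}\cdots c_{s_k})^d$. When $\rk(s)$ is odd, every divisor $d$ is odd, so this $d$th power preserves sign, and the bijection restricts to $(X_s^\pm)^{\Cyc_d}\cong\bigsqcup_{t\in s/d}X_t^\pm$. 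Combined with the first claim, this gives
\[\#(X_s^+)^{\Cyc_d}-\#(X_s^-)^{\Cyc_d}=\sum_{t\in s/d}(\#X_t^+-\#X_t^-)=\sum_{t\in s/d}a_t.\]
Applying Proposition \ref{prop:root-sub} to the $q$-Gauss congruence $(f_s(q))$ produces $f_s(\omega_d)=\sum_{t\in s/d}f_t(1)=\sum_{t\in s/d}a_t$, matching the signed fixed-point count and verifying the signed CSP.

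The key subtlety — and the reason the odd-rank hypothesis cannot be dropped — is the sign behaviour of $d$-fold repetitions. For even $d$, $\mathrm{sign}(c_{s_1}\cdots c_{s_k})^d$ is identically $+1$, so every $\Cyc_d$-fixed festoon is positive regardless of its underlying sign, and the clean splitting of fixed points along $X_s^+$ and $X_s^-$ fails. Establishing the parity analysis on repetitions is thus the crux of the argument; once that is in hand, the proof reduces to the (already proved) unsigned combinatorial identities and the root-of-unity characterisation of $q$-Gauss congruence.
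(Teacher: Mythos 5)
Your proof is correct and follows essentially the same route as the paper: a signed version of the enumeration from Theorem \ref{thm:festoon-X} gives $a_s=\#X_s^+-\#X_s^-$, and the observation that a $\Cyc_d$-fixed festoon is a $d$-fold repetition whose sign is the $d$th power of the sign of the repeated block yields exactly the paper's case analysis on the parity of $d$, after which Proposition \ref{prop:root-sub} closes the argument. Your explicit remark that for even $d$ all fixed festoons become positive is precisely the paper's formula $\#(X_s^-)^{\Cyc_d}=0$ in that case, so nothing is missing.
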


\begin{example}
Let $(a_n)$ be the Gauss congruence on $\Z_{\geq 1}$ given by $a_n=-\sigma(n)$. Then $b_n=-1$ for every $n\geq 1$ so by (\ref{eq:bc}):
\[c_n=-[x^n]\prod_{m\geq 1}\frac{1}{(1-x^m)}=-\#\{\text{partitions of $n$}\}.\]
The Lyndon structure $(X_n)$ of festoons coloured by $(|c_n|)$ is isomorphic to festoons of length $n$ where all beads have the same colour, and barriers can be drawn between the beads. We require there be at least one barrier, and that bead-length strictly increases when travelling clockwise only at barriers. For example, elements of $X_{12}$ include:
\[\begin{tikzpicture}[scale = 0.8]
\draw \tube{0}{0}{1}{0.2}{30*9}{30*12};
\draw \tube{0}{0}{1}{0.2}{30*7}{30*8};
\draw \tube{0}{0}{1}{0.2}{30*5}{30*6};
\draw \tube{0}{0}{1}{0.2}{30*2}{30*4};
\draw \tube{0}{0}{1}{0.2}{30*1}{30*1};

\draw [thick] (30*4.5:0.6) -- (30*4.5:1.4);
\draw [thick] (30*1.5:0.6) -- (30*1.5:1.4);
\draw [thick] (30*0.5:0.6) -- (30*0.5:1.4);

\draw \tube{3}{0}{1}{0.2}{30*5}{30*10};
\draw \tube{3}{0}{1}{0.2}{30*1}{30*4};
\draw \tube{3}{0}{1}{0.2}{30*0}{30*0};
\draw \tube{3}{0}{1}{0.2}{30*11}{30*11};

\draw [thick] ($(3,0)+(30*-1.5:0.6)$) -- ($(3,0)+(30*-1.5:1.4)$);

\draw \tube{6}{0}{1}{0.2}{30*10}{30*11};
\draw \tube{6}{0}{1}{0.2}{30*9}{30*9};
\draw \tube{6}{0}{1}{0.2}{30*6}{30*8};
\draw \tube{6}{0}{1}{0.2}{30*4}{30*5};
\draw \tube{6}{0}{1}{0.2}{30*3}{30*3};
\draw \tube{6}{0}{1}{0.2}{30*0}{30*2};

\draw [thick] ($(6,0)+(30*11.5:0.6)$) -- ($(6,0)+(30*11.5:1.4)$);
\draw [thick] ($(6,0)+(30*8.5:0.6)$) -- ($(6,0)+(30*8.5:1.4)$);
\draw [thick] ($(6,0)+(30*5.5:0.6)$) -- ($(6,0)+(30*5.5:1.4)$);
\draw [thick] ($(6,0)+(30*2.5:0.6)$) -- ($(6,0)+(30*2.5:1.4)$);

\foreach \x in {0,3,6} {
	\draw (\x,0) circle (1);
	\foreach \ang in {0,1,2,3,4,5,6,7,8,9,10,11} {
		\draw [fill = black] ($(\x,0)+(30*\ang:1)$) circle (0.075);
	}
}
\end{tikzpicture}\]
Each element is positive or negative depending on whether the number of barriers is even or odd, and we have $\#X_n^+-\#X_n^-=-\sigma(n)$. Let $Y_n$ be the set of such drawings where any number of barriers is allowed (including zero). Then $\#Y_n^+=\#X_n^++\sigma(n)$, so $\#Y_n^+-\#Y_n^-=0$ for every $n\geq 1$ and $(Y_n,\Cyc_n,0)$ satisfies the signed cyclic sieving phenomenon when $n$ is odd.
\end{example}

\section{Proofs}\label{sect:proofs}

\begin{proof}[Proof of Theorem \ref{thm:abc}]
We prove the equivalence between the first two statements by translating the more general result of \cite[Theorem 16]{dold-poset} to our language. Write $t\wr s$ if $t\mid s$ but $t\neq s$. By induction on $\rk$, there is a unique function $\mu^*:S\times S\to\Z$ satisfying
\[\mu^*(t,s)=\begin{cases}1&\text{if }t=s,\\
-\sum_{t\mid u\wr s}\mu^*(t,u)&\text{if }t\wr s,\\
0&\text{otherwise.}\end{cases}\]
This is the \emph{generalised M\"obius function} on the poset of $S$ under division. By generalised M\"obius inversion (see \cite[Section 3.7]{ec1}) we have
\[a_s=\sum_{t\mid s}\rk(t)b_t\quad\text{if and only if}\quad\rk(s)b_s=\sum_{t\mid s}\mu^*(t,s)a_t.\]
For $t\mid s$, the interval
\[[t,s]:=\{u\in S\mid t\mid u\mid s\}\]
is isomorphic to the poset of integers $\{1,\dots,s/t\}$ under division, so $\mu^*(t,s)=\mu(s/t)$ and
\[a_s=\sum_{t\mid s}\rk(t)b_t\quad\text{if and only if}\quad b_s=\frac{1}{\rk(s)}\sum_{t\mid s}\mu(s/t)a_t.\]
There is always a unique rational sequence of values $(b_s)$ satisfying the appropriate relationship with $(a_s)$, and the above equivalence proves that $(a_s)$ satisfies Gauss congruence if and only if $(b_s)$ is an integer sequence.

We give a novel proof involving $q$-Gauss congruence of the equivalence between the first and third statements. Recall from (\ref{eq:ac2S}) that for any integer sequence $(c_s)$ we have the identity
\[a_s:=\sum_{s_1+s_2+\cdots=s}\rk(s_1)c_{s_1}c_{s_2}\cdots=\sum_{\substack{\alpha\in S^*\\\tau(\alpha)=s}}\frac{\rk(s)}{|\alpha|}\binom{|\alpha|}{\alpha}\prod_{t\in S} c_t^{\alpha(t)}\]
In proving Theorem \ref{thm:abc-q} we (independently) construct a $q$-Gauss congruence $(g_s(q))$ evaluating at $q=1$ to $(a_s)$, proving that this sequence satisfies Gauss congruence. Conversely, if $(a_s)$ is any integer sequence satisfying Gauss congruence, we can find a rational sequence $(c_s)$ satisfying the equation with $(a_s)$. Define the sequence $(c_s')$ by $c_s':=\lfloor c_s\rfloor\in\Z$. Then the sequence $(a_s')$ defined by
\[a_s'=\sum_{s_1+s_2+\cdots=s}\rk(s_1)c_{s_1}'c_{s_2}'\cdots\]
satisfies Gauss congruence. Assume by contradiction that $(c_s)$ is not an integer sequence, and let $s$ be of minimal rank such that $c_s\not\in\Z$. Then $a_t=a_t'$ for all $t<s$, and $a_s-a_s'=\rk(s)(c_s-c_s')$, so
\[c_s=c_s'+\frac{1}{\rk(s)}\sum_{t\mid s}\mu(s/t)a_t-\frac{1}{\rk(s)}\sum_{t\mid s}\mu(s/t)a_t'.\]
But each term on the right-hand side is an integer, which contradicts $c_s\not\in\Z$.
\end{proof}

\begin{proof}[Proof of Proposition \ref{prop:root-sub}]
We first prove that if $(f_s(q))$ satisfies $q$-Gauss congruence and $(g_s(q))$ is a sequence of integer polynomials satisfying $f_s(1)=g_s(1)$, then $(g_s(q))$ satisfies $q$-Gauss congruence if and only if 
\[f_s(q)\equiv g_s(q)\pmod{q^{\rk(s)}-1}\]
for every $s\in S$.

Set $h_s(q):=f_s(q)-g_s(q)$. Then $h_s(1)=0$ for every $s\in S$. We prove that $h_s(q)$ satisfies $q$-Gauss congruence if and only if $h_s(q)\equiv 0\pmod{q^{\rk(s)}-1}$ for every $s\in S$ by induction on $\rk$. Consider the equation
\[\sum_{t\mid s}\mu(s/t)h_t(q^{s/t}).\]
By induction we assume that $h_t(q)\equiv 0\pmod{q^{\rk(t)}-1}$ for $t\mid s$ with $t\neq s$, and hence $h_t(q^{s/t})\equiv 0\pmod{q^{\rk(s)}-1}$. The remaining term is $h_s(q)$. Since $h_s(1)=0$, we have $h_s(q)\equiv 0\pmod{[\rk(s)]_q}$ if and only if $h_s(q)\equiv 0\pmod{q^{\rk(s)}-1}$ as required.

We now prove the Proposition, and without loss of generality we can assume by reducing $f_s(q)$ mod $(q^{\rk(s)}-1)$ that $\deg(f_s)<\rk(s)$ for every $s\in S$. By induction, find rational values $b_s$ such that
\[f_s(1)=\sum_{t\mid s}\rk(t)b_t\]
for every $s\in S$. Consider the sequence $(g_s(q))$ given by
\[g_s(q):=\sum_{t\mid s}[\rk(t)]_{q^{s/t}}b_t.\]
If $q=\omega_d$ for $d\mid \rk(s)$, then $q^{s/t}$ is a $\rk(t)^\text{th}$ root of unity which is equal to $1$ if and only if $d\mid s/t$, or equivalently, $t\mid u$ for some $u\in s/d$. Then
\[g_s(\omega_d)=\sum_{u\in s/d}\sum_{t\mid u}\rk(t)b_t=\sum_{u\in s/d}f_u(1).\]
By the degree restrictions, $f_s(q)$ has the appropriate root of unity evaluations if and only if $f_s(q)=g_s(q)$, and since each $f_s(q)$ is an integer polynomial we obtain by induction that $b_s\in\Z$. Conversely, if $f_s(q)=g_s(q)$ for an arbitrary integer sequence $(b_s)$, then $(f_s(q))$ satisfies $q$-Gauss congruence since for every $s\in S$ we can use M\"obius inversion of polynomials to obtain
\[\sum_{t\mid s}\mu(s/t)f_t(q^{s/t})=b_s[\rk(s)]_q\equiv 0\pmod{[\rk(s)]_q}.\]
\end{proof}

\begin{proof}[Proof of Lemma \ref{lem:pushforward}]
We compute $g_t(\omega_d)$ for every $d\mid\rk(t)$ (implying $d\mid\rk(s)$):
\[g_t(\omega_d)=\sum_{s\in\phi^{-1}(t)}f_s(\omega_d)=\sum_{s\in \phi^{-1}(t)}\sum_{u\in s/d}f_u(1).\]
Every $u\in S$ with $\phi(u)\in t/d$ occurs in this sum exactly once, so
\[g_t(\omega_d)=\sum_{u\in \phi^{-1}(t/d)}f_u(1)=\sum_{u\in t/d}g_u(1)\]
as required.
\end{proof}

\begin{proof}[Proof of Lemma \ref{lem:pullback}]
For $d\mid\rk(s)\mid\rk(\phi(s))$, we have
\[f_s(\omega_d)=g_{\phi(s)}(\omega_d)=\sum_{t\in \phi(s)/d}g_t(1)=\sum_{u\in s/d}g_{\phi(u)}(1)=\sum_{u\in s/d}f_u(1).\]
by the property on $\phi$.
\end{proof}

\begin{proof}[Proof of Lemma \ref{lem:multiply}]
For any $d\mid\rk(s)$,
\[f_s(\omega_d)g_s(\omega_d)=\sum_{t\in s/d}f_t(1)\sum_{t\in s/d}g_t(1)=\sum_{t\in s/d}f_t(1)g_t(1)\]
where each sum has at most a single element since $S$ is torsion-free.
\end{proof}

\begin{proof}[Proof of Lemma \ref{lem:chaining-pre}]
We evaluate $h_{s,t,u}(\omega_d)$ for $d\mid \rk(s,t,u)=\rk(s)$. If $d\nmid (s,t,u)$, then $d\nmid (s,t)$ or $d\nmid(s,u)$, so $f_{s,t}(\omega_d)f_{s,u}(\omega_d)=0$. Otherwise,
\[h_{s,t,u}(\omega_d)=f_{(s,t)/d}(1)g_{(s,u)/d}(1)=f_{s/d,t/d}(1)g_{s/d,u/d}(1)=h_{(s,t,u)/d}(1).\]
\end{proof}

\begin{proof}[Proof of Lemma \ref{lem:chaining-suf}]
We evaluate $h_{s,t,u}(\omega_d)$ for $d\mid \rk(s,t,u)=\rk(s)$. If $d\nmid s$, or $d\mid s$ but $d\nmid t$, then $f_{s,t}(\omega_d)=0$. If $d\mid s$ and $d\mid t$ but $d\nmid u$, then $f_{t,u}(\omega_d)=0$. The final case is $d\mid (s,t,u)$, and we have
\[h_{s,t,u}(\omega_d)=f_{(s,t)/d}(1)g_{(s,u)/d}(1)=f_{s/d,t/d}(1)g_{s/d,u/d}(1)=h_{(s,t,u)/d}(1).\]
\end{proof}

\begin{proof}[Proof of Theorem \ref{thm:fund}]
We are required to show $f_\alpha$ is an integer polynomial satisfying the root-of-unity evaluations from Proposition \ref{prop:root-sub}. For $\alpha\in A^\psi$, set $\gcd(\alpha):=\gcd(\{\alpha(s)\mid s\in S\})$. Then
\[f_\alpha(q)=\frac{[\psi(\alpha)]_q}{[\gcd(\alpha)]_q}\cdot\frac{[\gcd(\alpha)]_q}{[|\alpha|]_q}\qbinom{|\alpha|}{\alpha}.\]
The first factor is an integer polynomial since $\gcd(\alpha)\mid\psi(\alpha)$, and the second is also an integer polynomial by \cite[Theorem 3.1]{qmulti-div}, so $f_\alpha(q)\in\Z[q]$. If $q=\omega_d$ for $d\mid\psi(\alpha)$, the first factor is zero unless $d\mid\gcd(\alpha)$. We recall \cite[Lemma 2.4]{sagan}: if $m\equiv n\pmod{d}$, then
\[\lim_{q\to\omega_d}\frac{[n]_q}{[m]_q}=\begin{cases}n/m & \text{if }m\equiv 0\pmod{d},\\
1 & \text{otherwise.}\end{cases}\]
Hence, for $d\mid\gcd(\alpha)$ we have $d\mid\alpha(s)$ for every $s\in S$ and $d\mid|\alpha|$, so
\begin{align*}\lim_{q\to\omega_d}f_\alpha(q)&=\frac{\psi(\alpha)}{|\alpha|}\frac{d\cdot 2d\cdots |\alpha|}{\prod_{s\in S}d\cdot 2d\cdots \alpha(s)}\\&=\frac{\psi(\alpha)/d}{|\alpha|/d}\frac{1\cdot 2\cdots |\alpha|/d}{\prod_{s\in S}1\cdot 2\cdots \alpha(s)/d}\\&=f_{\alpha/d}(1)\end{align*}
as required.
\end{proof}

\begin{proof}[Proof of Proposition \ref{prop:exp}]
The case $\lambda=0$ is clear. For $\lambda\geq 1$, let $A:=\{1,\dots,\lambda\}$. Elements of $A^*$ with rank $n$ are in correspondence with compositions of length $\lambda$ and sum $n$. Observe that
\[f_n(q)=\sum_{\substack{\alpha\in A^*\\ \rk(\alpha)=n}}f_\alpha(q),\]
gives a $q$-Gauss congruence, where $f_\alpha(q)$ is from Theorem \ref{thm:fund} (applied to $A^*$). We have $f_n(1)=\lambda^n$ since both sides count all words of length $n$ in $A$. When $\lambda<0$ we multiply each $f_n(q)$ by $\eps_n(q)$, and this preserves $q$-Gauss congruence since $\Z_{\geq 1}$ is torsion-free.
\end{proof}

\begin{proof}[Proof of Theorem \ref{thm:abc-q}]
The final statement follows from the proof of Proposition \ref{prop:root-sub}. It remains to prove that each equation for $(g_s(q))$ satisfies $q$-Gauss congruence and $g_s(1)=a_s$.

(a) The coefficients of $g_s(q)$ are integers by Proposition \ref{prop:gauss-other} applied to each Ramanujan's sum $\mu_j$. For every $d\mid \rk(s)$:
\begin{align*}
g_s(\omega_d)&=\frac{1}{\rk(s)}\sum_{j=0}^{\rk(s)-1}\left(\sum_{t\mid s}\mu_j(s/t)a_t\right)\omega_d^j\\
&=\frac{1}{\rk(s)}\sum_{t\mid s}a_t\sum_{j=0}^{\rk(s)-1}\omega_d^j\mu_j(s/t)\\
&=\frac{1}{\rk(s)}\sum_{t\mid s}\begin{cases}\rk(s)&\text{if }d=s/t\\
0 & \text{otherwise}\end{cases}\\
&=\sum_{t\in s/d}a_t.
\end{align*}
For $d=1$, this gives $g_s(1)=a_s$ for every $s\in S$, and hence $(g_s(q))$ satisfies $q$-Gauss congruence by Proposition \ref{prop:root-sub}.

(b) This case follows from the proof of Proposition \ref{prop:root-sub}.

(c) Assume $c_s\geq 0$ for every $s\in S$. Set $A:=\{(s,j)\mid n\geq 1,\,1\leq j\leq c_s\}$ and $\psi:A\to\Z$ by $\psi(s,j):=\rk(s)$. We also define $\psi$ on $S$ by $\psi(s)=\rk(s)$. The map $A\to S$ with $(s,j)\mapsto s$ induces a morphism $A^\psi\to S^\psi$ by $\Z$-linear extension. The pushfoward of $(f_\beta(q))$ for $\beta\in A^\psi$ from Theorem \ref{thm:fund} along the induced morphism gives the $q$-Gauss congruence
\[g_\alpha(q):=\sum_{\beta}\frac{[\psi(\beta)]_q}{[|\beta|]_q}\qbinom{[|\beta|]}{\beta}=\frac{[\psi(\alpha)]_q}{[|\alpha|]_q}\sum_{\beta}\qbinom{|\alpha|}{\beta}\]
on $S^\psi$, where the sum is over all $\beta\in A^\psi$ such that $\sum_j\beta(s,j)=\alpha(s)$ for every $s\in S$. Equivalently, $\beta(s):=\beta(s,1)\cdots\beta(s,c_s)$ is a composition with sum $\alpha(s)$. By factoring out the appropriate $q$-multinomial coefficient:
\begin{align*}g_\alpha(q)=&\frac{[\psi(\alpha)]_q}{[|\alpha|]_q}\qbinom{|\alpha|}{\alpha}\sum_{\beta}\prod_s\qbinom{\alpha(s)}{\beta(s)}\\
=&\frac{[\psi(\alpha)]_q}{[|\alpha|]_q}\qbinom{|\alpha|}{\alpha}\prod_s[\exp_{c_s}(\alpha(s))]_q\end{align*}
since $\beta(s)$ ranges over all compositions of length $c_s$ and sum $\alpha(s)$. Now pushfoward $(g_\alpha(q))$ along the canonical map $\tau:S^\psi\to S$ to get
\[f_s(q)=\sum_{\substack{\alpha\in S^*\\\tau(\alpha)=s}}\frac{[\rk(s)]_q}{[|\alpha|]_q}\qbinom{|\alpha|}{\alpha}\prod_{t}[\exp_{c_t}(\alpha(t))]_q\]
noting that $S^*$ and $S^\psi$ have the same elements in this case. This satisfies $f_s(1)=a_s$ by (\ref{eq:ac2S}).

In the general case, we construct $f_\beta(q)$ using the values $|c_s|$ for every $s\in S$, but replace $g_\alpha(q)$ with $g_\alpha(q)h_\alpha(q)$ in the above proof, where
\[h_\alpha(q)=\prod_{\substack{s\in S\\c_s<0}}\eps_{\alpha(s)}(q).\]
This replaces each $[\exp_{|c_s|}(\alpha(s))]_q$ with $[\exp_{c_s}(\alpha(s))]_q$. Multiplication preserves $q$-Gauss congruence since $S^\psi$ is torsion-free.
\end{proof}

\begin{proof}[Proof of Theorem \ref{thm:riordan}]
We observe that
\[\#X_{n,k}=\sum_{n_1+\cdots+n_k=n}n_1c_{n_1}\cdots c_{n_k}=xC'(x)C(x)^{k-1}\]
and hence the array $[r_{n,k}]$ defined by $r_{n,k}:=\#X_{n+1,k+1}$ is the Riordan array $\mathcal{R}(C'(x),C(x))$ (see \cite{riordan} for the general theory of these arrays). The proof now follows the `inverse-inverse trick' for Riordan arrays from \cite{invinv}.

Let $R=\mathcal{R}(d(t),h(t))$ be a proper Riordan array with values $r_{n,k}$. We combine the following two facts:
\begin{itemize} \item \cite[Theorem 3.2]{riordan} The matrix inverse $R^{-1}$ is $\mathcal{R}(1/d(\overline{h}(t)),\overline{h}(t))$, where $\overline{h}$ is the compositional inverse of $h$.
\item \cite[Theorem 2.2]{riordan-identities} If $R^{-1}=[r_{n,k}^*]$, then
\[r_{n,k}^*=[t^{n-k}]h'(t)/(d(t)(t/h(t))^{n+1}).\]
\end{itemize}
We apply the second fact to $R^{-1}$ from the first fact to obtain
\[r_{n,k}=[t^{n-k}]{\overline{h}}{}'(t)d(\overline{h}(t))(t/\overline{h}(t))^{n+1}.\]
In the case $\mathcal{R}(C'(x),C(x))$, this formula dramatically simplifies to
\[r_{n,k}=[t^{n-k}](t/\overline{C}(t))^{n+1}.\]
The equation $C(x)=xD(C(x))$ implies that $D(t)=t/\overline{C}(t)$, so $\#X_{n,k}=[t^{n-k}]D(t)^n$.
\end{proof}

\begin{proof}[Proof of Lemma \ref{lem:schroder-bij}]
Tubings on $\mathcal{I}_n$ can be written as a words in the alphabet $\{(,),\bullet\}$ where the brackets are matched, there are $n$ letters $\bullet$, and the word does not contain $)($ or a pair $(($ which is matched to a pair $))$.

\[\begin{tikzpicture}[scale = 0.8]
\draw (1,0) -- (6,0);
\foreach \x in {1,2,3,4,5,6} {
	\draw [fill = black] (\x,0) circle (0.1);
}
\draw (1,0.35) arc (90:270:0.35) -- (4,-0.35) arc (-90:90:0.35) -- cycle;
\draw (1,0.25) arc (90:270:0.25) -- (2,-0.25) arc (-90:90:0.25) -- cycle;
\draw (4,0.25) arc (90:270:0.25) -- (4,-0.25) arc (-90:90:0.25) -- cycle;
\draw (6,0.35) arc (90:270:0.35) -- (6,-0.35) arc (-90:90:0.35) -- cycle;

\node at (7,0) {$\leftrightarrow$};
\node at (10.175,0) {\Large $((\bullet\,\bullet)\bullet(\bullet))\bullet(\bullet)$};
\end{tikzpicture}\]
Given a tubing, construct a Schr\"oder path: when a tube begins add a step ${\nearrow}$, when a final vertex is passed add a step ${\searrow}$, and when a nonfinal vertex is passed add a step ${\longrightarrow}$.

Conversely, for a Schr\"oder path $v_1\cdots v_m$, we construct the tubing word. At each step ${\nearrow}$ add the letter $($, and for any other step add $\bullet$. For each step ${\nearrow}$ at height $h$, add the letter $)$ immediately before the next step ${\longrightarrow}$ or ${\searrow}$ at height $h$, or when the path ends. This gives the inverse map.
\[\begin{tikzpicture}[scale = 0.8]
\draw (1,0) -- (6,0);
\foreach \x in {1,2,3,4,5,6} {
	\draw [fill = black] (\x,0) circle (0.1);
}
\draw (1,0.35) arc (90:270:0.35) -- (4,-0.35) arc (-90:90:0.35) -- cycle;
\draw (1,0.25) arc (90:270:0.25) -- (2,-0.25) arc (-90:90:0.25) -- cycle;
\draw (4,0.25) arc (90:270:0.25) -- (4,-0.25) arc (-90:90:0.25) -- cycle;
\draw (6,0.35) arc (90:270:0.35) -- (6,-0.35) arc (-90:90:0.35) -- cycle;

\node at (7,0) {$\leftrightarrow$};
\draw [thick] (-0.5+8,-0.5) -- (-0.5+9,0.5) -- (-0.5+10,0.5) -- (-0.5+10.5,0) -- (-0.5+11,-0.5) -- (-0.5+11.5,0) -- (-0.5+12,-0.5) -- (-0.5+13,-0.5) -- (-0.5+13.5,0) -- (-0.5+14,-0.5);
\draw [thin] (-0.5+8,-0.5) -- (-0.5+14,-0.5);
\end{tikzpicture}\]
\end{proof}

\begin{proof}[Proof of Lemma \ref{lem:cycle-tubings}]
We prove that both sets biject with pairs $(p,j)$, where $p=v_1\cdots v_m$ is a Schr\"oder path with $n-k$ steps $\longrightarrow$, one of which is $v_m$, and $j$ is an integer such that $v_j$ is a step ${\longrightarrow}$ or ${\searrow}$ which does not occur after the first step ${\longrightarrow}$ at height $0$.

By beginning a bead after each free vertex, improper tubings on $\Gamma_n$ form a Lyndon structure in bijection with festoons coloured by $(c_n)$, where $c_n$ counts tubings on $\mathcal{I}_n$ where only the final vertex is free. In particular, these tubings biject with pairs $(T,i)$, where $T$ is a tubing on $\mathcal{I}_n$ with the last vertex free and $i$ is an integer such that vertex $i$ of $\mathcal{I}_n$ does not occur after the first free vertex. By Lemma \ref{lem:schroder-bij}, these biject with the pairs $(p,j)$ defined above. 

Given $(p,j)$ we construct a Delannoy path: if $v_j$ is the final step, remove it to obtain a (strict) Schr\"oder path of length $2(n-1)$ with $n-k-1$ steps $\longrightarrow$. Otherwise, remove the final step and swap the subpaths occuring before and after $v_j$, so
\[v_1\cdots v_{j-1}v_jv_{j+1}\cdots v_m\mapsto v_{j+1}\cdots v_mv_jv_1\cdots v_{j-1}.\]
For the inverse map, if the Delannoy path is a strict Schr\"oder path, append a step $\longrightarrow$ to obtain a path $p$. Then $(p,m)$ satisfies the required conditions, where $v_m$ is final step of the path. Otherwise, choose $j$ maximal such that $v_{j+1}$ is a step whose height is minimal amongst all steps in the path. Swap the path before and after $v_j$ and append a step $\longrightarrow$ to obtain a path $p$, then take the pair $(p,j)$.
\end{proof}

We exemplify this three-way bijection: a tubing on $\Gamma_8$ with five tubes (with the top vertex distinguished), a Schr\"oder path with a marked step $j$, and a Delannoy path of length $14$ with two steps $\longrightarrow$.

\[\begin{tikzpicture}[scale = 0.6]
\draw \tube{-2}{0}{2}{0.5}{1*45}{3*45};
\draw \tube{-2}{0}{2}{0.35}{2*45}{3*45};
\draw \tube{-2}{0}{2}{0.5}{5*45}{7*45};
\draw \tube{-2}{0}{2}{0.35}{5*45}{5*45};
\draw \tube{-2}{0}{2}{0.35}{7*45}{7*45};

\draw (-2,0) circle (2);
\foreach \ang in {0,1,2,3,4,5,6,7} {
	\draw [fill = black] ($(-2,0)+(45*\ang:2)$) circle (0.2);
}
\draw [fill = white] (-2,2) circle (0.2);

\draw (1.5,1) -- (2.5,2) -- (3.5,2);
\draw (4,1.5) -- (4.5,1) -- (5.5,1) -- (6.5,2) -- (7.5,1) -- (8,1.5) -- (8.5,1) -- (9.5,1);
\draw [thin, dashed] (1.5,1) -- (9.5,1);
\draw [very thick, dotted, color = ppl] (3.5,2) -- (4,1.5);
\node at (3.75,0.5) {$j=4$};

\draw (1.5,-1) -- (2,-1.5) -- (3,-1.5) -- (4,-0.5) -- (5,-1.5) -- (5.5,-1) -- (6,-1.5) -- (6.5,-2) -- (7.5,-1) -- (8.5,-1);
\draw [thin, dashed] (1.5,-1) -- (8.5,-1);
\end{tikzpicture}\]

\begin{proof}[Proof of Lemma \ref{lem:strict-schroder}]
By algebraic manipulation, we prove the equivalent statement
\[C(x)=x+\frac{C(x)^2}{1-C(x)}.\]
For $n=1$ we have only the empty path. For $n>1$, the first step $v_1={\nearrow}$ is eventually matched by the first $v_i={\searrow}$ at height $1$. We decompose the path into $k\geq 1$ parts separated by steps ${\longrightarrow}$ at height $1$ between $v_1$ and $v_i$, and a final part after $v_i$. For example:
\[\begin{tikzpicture}[scale = 0.8]
\draw (0,0) -- (11,0);
\draw [thick, dotted] (0,0) -- (0.5,0.5);
\draw [thick] (0.5,0.5) -- (1,1) -- (2,1) -- (2.5,0.5);
\draw [thick, dotted] (2.5,0.5) -- (3.5,0.5);
\draw [thick] (3.5,0.5) -- (4,1) -- (4.5,0.5) -- (5,1) -- (6,1) -- (6.5,0.5);
\draw [thick, dotted] (6.5,0.5) -- (8.5,0.5) -- (9,0);
\draw [thick] (9,0) -- (9.5,0.5) -- (10,0) -- (10.5,0.5) -- (11,0);

\draw [thin, dashed] (0.5,0) -- (0.5,1.1);
\draw [thin, dashed] (2.5,0) -- (2.5,1.1);
\draw [thin, dashed] (3.5,0) -- (3.5,1.1);
\draw [thin, dashed] (6.5,0) -- (6.5,1.1);
\draw [thin, dashed] (7.5,0) -- (7.5,1.1);
\draw [thin, dashed] (8.5,0) -- (8.5,1.1);
\draw [thin, dashed] (9,0) -- (9,1.1);
\end{tikzpicture}\]
Given arbitrary paths for these parts, this construction introduces a step ${\nearrow}$, a step ${\searrow}$ and ${k-1}$ steps ${\longrightarrow}$. If the lengths of the subpaths are $n_1,\dots,n_{k+1}$, the length of the total path is now
\[2(n_1-1)+\cdots+2(n_{k+1}-1)+2k=2(n_1+\cdots+n_{k+1}-1).\]
So $c_1=1$ and for $n>1$ we have the recurrence
\[c_n=\sum_{k\geq 1}\sum_{n_1+\cdots+n_{k+1}=n}c_{n_1}\cdots c_{n_{k+1}}\]
which gives $C(x)=x+C(x)^2+C(x)^3+\cdots$ as required.
\end{proof}

\begin{proof}[Proof of Theorem \ref{thm:signed}]
Using the enumeration from the proof of Theorem \ref{thm:festoon-X} but taking into account the sign of each colour, we have
\[\#X_s^+-\#X_s^-=\sum_{s_1+s_2+\cdots}\rk(s_1)c_{s_1}c_{s_2}\cdots\]
which is the Gauss congruence associated with $(c_s)$. Fixed points of both signs are counted by
\[\#(X_s^+)^{\Cyc_d}=\sum_{t\in s/d}\begin{cases}\#X_t^++\#X_t^-&\text{if $d$ even},\\
\#X_t^+ &\text{if $d$ odd},\end{cases}\]
and
\[\#(X_s^-)^{\Cyc_d}=\sum_{t\in s/d}\begin{cases}0&\text{if $d$ even},\\
\#X_t^- & \text{if $d$ odd}.\end{cases}\]
Since $f_s(q)$ satisfies $q$-Gauss congruence,
\[f_s(\omega_d)=\sum_{t\in s/d}f_t(1)=\sum_{t\in s/d}\#X_{t}^+-\#X_{t}^-\]
for every $d\mid\rk(s)$, which equals $\#(X^+)^{\Cyc_d}-\#(X^-)^{\Cyc_d}$ when $\rk(s)$ is odd.
\end{proof}



\section*{Acknowledgements}
The author would like to thank Per Alexandersson, Ofir Gorodetsky, Eloise Little, Cory Peters and Ben Young for their thoughtful feedback and advice on this work.

The OEIS \cite{oeis} was an invaluable tool for finding and cross-checking information about integer sequences. Discovery and verification of examples was often performed with the computer-algebra program MAGMA \cite{magma}.


%
%

\bibliographystyle{alphaurl}
\bibliography{gauss-congruence}

\end{document}